\documentclass[a4paper,reqno]{amsart}

\usepackage[utf8]{inputenc}
\usepackage[english]{babel}
\usepackage[T1]{fontenc}
\usepackage{microtype}
\usepackage{amsmath,amssymb,amsthm}
\usepackage{thmtools,thm-restate}
\usepackage{mathtools}
\usepackage{upgreek}
\usepackage{orcidlink}
\usepackage{hyperref}
\hypersetup{hidelinks}
\usepackage[nameinlink]{cleveref}
\usepackage{suffix}
\usepackage{nccmath}
\usepackage{nicefrac}
\usepackage{enumitem}

\usepackage[hang,flushmargin]{footmisc}
\crefname{equation}{equation}{equation}

\title[{Boundary delay}]{Evolutionary boundary delay equations}
\author[B.~Aigner]{Bernhard Aigner\orcidlink{0009-0009-8252-162X}}
\thanks{supported by the state of Saxony via a graduate student stipend}
\address[B.~Aigner]{Institut f\"{u}r Angewandte Analysis\\
  TU Bergakademie Freiberg\\
  Institut für Angewandte Analysis\\
  Pr\"{u}ferstr. 9, 09599 Freiberg\\
  Germany}
\email[B.~Aigner]{bernhard.aigner@doktorand.tu-freiberg.de}
\date{\today}

\subjclass{35A01, 47J35 (Primary) 34K43, 35G30, 47B02, 47N20 (Secondary)}
\keywords{Evolutionary equations, state-dependent delay, boundary value problems, nonautonomous delay, existence and uniqueness}

\DeclarePairedDelimiterX{\norm}[1]{\lVert}{\rVert}{#1}
\DeclarePairedDelimiterX{\abs}[1]{\lvert}{\rvert}{#1}
\DeclarePairedDelimiterX{\set}[1]{\{}{\}}{#1}
\DeclarePairedDelimiterX{\dset}[2]{\{}{\}}{#1\,\delimsize\vert\,\mathopen{} #2}
\DeclarePairedDelimiterX{\scprod}[2]{(}{)}{#1\delimsize| #2}
\DeclarePairedDelimiterX{\dualprod}[2]{\langle}{\rangle}{#1,#2}
\newcommand{\e}{\mathrm{e}}
\newcommand{\iu}{\mathrm{i}}
\newcommand{\dd}{\mathrm{d}}
\newcommand{\dx}[1][x]{\,\dd{}#1}
\newcommand{\argdot}{\cdot}
\renewcommand{\Re}{\operatorname{Re}}

\newcommand{\N}{\mathbb{N}}

\newcommand{\leb}{\mathrm{L}}
\newcommand{\sobh}{\mathrm{H}}
\newcommand{\sobw}{\mathrm{W}}
\newcommand{\flt}{\mathcal{L}_{\rho}}

\definecolor{bacolor}{rgb}{0.0,0.0,0.8}
\definecolor{mwcolor}{rgb}{1.0, 0.0, 0.5}

\newtheorem*{definition*}{Definition}
\newtheorem*{theorem*}{Theorem}
\newtheorem*{proposition*}{Proposition}
\newtheorem*{lemma*}{Lemma}
\newtheorem*{corollary*}{Corollary}
\newtheorem*{remark*}{Remark}

\newtheorem{definition}{Definition}[section]
\newtheorem{theorem}[definition]{Theorem}
\newtheorem{proposition}[definition]{Proposition}
\newtheorem{lemma}[definition]{Lemma}

\newtheorem{remark}[definition]{Remark}
\newtheorem{example}[definition]{Example}
\newtheorem{assumptions}[definition]{Assumptions}

\begin{document}

\begin{abstract}
  Recent well-posedness results for evolutionary partial differential equations with state-dependent inhomogeneity are extended to a larger problem class incorporating nonautonomous material behaviour. This generalization facilitates the formulation of a framework able to accommodate delayed boundary value problems related to evolutionary partial differential equations. The results permit the ad hoc treatment of Dirichlet- and Neumann-like boundary conditions with state-dependent delay. Complex boundary conditions involving nonautonomous delay in the material law and state-dependent delay in the forcing term can be accommodated by use of extended state spaces. This approach provides the first systematic treatment addressing well-posedness of several classical boundary conditions involving state-dependent delay. The viability and versatility of the theory is showcased by applications to parabolic and hyperbolic partial differential equations with Dirichlet, Neumann, Robin, Wentzell--Robin and Leontovich boundary conditions.
\end{abstract}

\maketitle

\section{Introduction}
\label{sec:Intro}
The ultimate goal of this article is to provide local existence and uniqueness results for boundary value problems associated to evolution equations
\begin{equation}
  \label{eq:EvolEq}
  \big[\partial_{t}M + N + A\big] u = F(\argdot, u_{(\argdot)})
\end{equation}
with nonautonomous and state-dependent delay in the boundary condition. In this context, $M, N \in \mathcal{L}\big(\leb^2_{\rho}(\mathbb{R};H)\big)$ are (nonautonomous) bounded linear operators over a Hilbert (product) space $H = H_{0}\!\times\! H_{1}$ and the equation is understood as an equation on an exponentially weighted Lebesgue-space $\leb^2_{\rho}(\mathbb{R};H) = \leb^2\big((\mathbb{R},\e^{-2\rho \argdot}\dx[t]);H\big)$. $u_{(t)}$ denotes the history of the state $u$, i.e.,
\begin{equation*}
  u_{(t)}\colon [-h,0] \to H, \quad s\mapsto u(t + s),
\end{equation*}
where $h>0$ is assumed to be a finite backward time horizon. $M$ and $N$ need to adhere to the structural assumptions of the framework of evolutionary equations, cf.~\cref{subsec:Basics} and $A\colon H\supseteq \operatorname{dom}(A) \to H$ is a purely spatial operator of one of the following forms:
\begin{enumerate}[label = (\roman*), leftmargin = 4ex]
  \item\label{it:BdyTriples} $A$ is the extension of a skew-symmetric operator of the form
        \begin{equation*}
          \mathring{A} = \begin{pmatrix} 0 & \mathring{D} \\ \mathring{G} & 0 \end{pmatrix},
        \end{equation*}
        where $\mathring{D}\colon H_{1}\supseteq \operatorname{dom}(\mathring{D})\to H_{0}$ and $\mathring{G}\colon H_{0}\supseteq \operatorname{dom}(\mathring{G})\to H_{1}$ are closed, densely defined, linear operators satisfying
        \begin{equation*}
          \mathring{D} \subseteq D\coloneq - \mathring{G}^{\ast} \quad\text{and}\quad \mathring{G} \subseteq G\coloneq - \mathring{D}^{\ast}.
        \end{equation*}
        This block operator setup for $\mathring{A}$ is inspired by the definition of {\em abstract boundary data spaces}, cf.~\cite{Trostorff2014}, and is related to boundary space concepts characterizing which extensions of a skew-symmetric operator $\mathring{A}$ define a maximal accretive realization $A$. In the classical setting of equal deficiency indices of $\mathring{A}$ those boundary conditions can be parametrized utilizing the concept of boundary triples, cf.~\cite{Behrndt2020}. Similarly to the semigroup setting of abstract Cauchy problems (i.e.\ $M = 1$ and $N = 0$ in \cref{eq:EvolEq}), such a maximal accretive realization assures well-posedness of the associated evolution equation appealing to Picard's theorem, cf.~\cite[thm.~6.2.1]{STW2022}.
\item\label{it:DivGrad} $A$ is of the form
        \begin{equation*}
          A = \begin{pmatrix} 0 & -C^{\ast} \\ C & 0 \end{pmatrix},
    \end{equation*}
        where $C\colon H_{0}\supseteq \operatorname{dom}(C) \to H_{1}$ is a closed, densely defined, linear operator generating an {\em abstract div-grad system}, cf.~\cite{PicardSeidlerTrostorffWaurick2016} and \cref{subsec:ExtendedState}. Since $A$ is skew-selfadjoint by definition, the associated evolution problem \eqref{eq:EvolEq} is well-posed. In contrast to approach \ref{it:BdyTriples}, the concept of abstract $\operatorname{div}$-$\operatorname{grad}$ systems can be used to pose an evolutionary \cref{eq:EvolEq} on an extended state space that allows the incorporation of complex boundary conditions, e.g.\ Robin and Wentzell--Robin boundary conditions for $\operatorname{div}$-$\operatorname{grad}$ systems or Leontovich boundary conditions for $\operatorname{curl}$-$\operatorname{curl}$ systems. A technically more sophisticated application relating to a $\operatorname{curl}$-$\operatorname{Grad}$ system, where $\operatorname{Grad}$ denotes the symmetric gradient, is the recent article \cite{Buchinger2024} pertaining to a thermo-piezo-electromagnetic system, cf.~\cite{Doherty2024}.
\end{enumerate}
As a key first step, recent results developed for evolutionary equations with state-dependent delay in \cite{AignerWaurick2026} are extended to the nonautonomous case \eqref{eq:EvolEq}. The generalization is of independent interest and in particular facilitates nonautonomous delay in the material law. The employed solution theory makes use of the theory of evolutionary equations in the sense of Picard, cf.~\cite{STW2022}, in particular of the nonautonomous solution theory developed in \cite[ch.~16]{STW2022} and \cite{TrostorffWaurick2021} by S.~Trostorff and M.~Waurick.\\
The structure of the article is as follows:\\
In \cref{sec:EvolEq} on evolutionary equations, a brief account on basic facts required for successive parts is provided; in particular, the setting of \cite{AignerWaurick2026} is recapitulated. Consecutively, two main results from \cite{AignerWaurick2026} are generalized to the nonautonomous case. The section concludes with an application of these new results to relevant examples in the literature addressing heat and wave equations with nonautonomous delay.\\
In \cref{sec:BdyDelay} on boundary value problems, a comprehensive framework to treat boundary conditions for evolutionary equations with state-dependent boundary delay is developed. Initially, a reformulation for inhomogeneous Dirichlet- and Neumann boundary conditions with state-dependent delay using approach \ref{it:BdyTriples} is provided; corresponding applications are presented in \cref{app:AbstractBD}. Consecutively, in \cref{subsec:ExtendedState}, the approach \ref{it:DivGrad} is elaborated upon: After a brief introduction to abstract $\operatorname{div}$-$\operatorname{grad}$ systems, an extended state space approach is used to model boundary conditions for $\operatorname{div}$-$\operatorname{grad}$ and $\operatorname{curl}$-$\operatorname{curl}$ systems with nonautonomous and state-dependent delay, generalizing several results in the literature. Finally, the versatility of the results is demonstrated by applications to heat, wave and Maxwell's equations.

\section{Evolutionary equations}
\label{sec:EvolEq}
In this section, the solution theory for evolution equations of the form \eqref{eq:EvolEq} is expanded upon. We commence by formally introducing the spaces and operators involved.

\subsection{Basic concepts}
\label{subsec:Basics}
The solution theory for \cref{eq:EvolEq} is formulated on weighted Bochner--Lebesgue and Bochner--Sobolev spaces:
\begin{definition}[exponentially weighted spaces]
  \label{def:LebesgueSobolev}
  For $\rho \in \mathbb{R}$, $I\subseteq \mathbb{R}$ an open interval (finite or infinite) and a Hilbert space $H$ let
  \begin{align*}
    \leb^2_{\rho}(I;H)&\coloneq \overline{\mathring{\mathcal{C}}^{\infty}\bigl(I;H\bigr)}^{\norm{\argdot}_{2,\rho}}\text{,}\\
    \norm{u}_{2,\rho}&\coloneq \Bigl(\medint\int_{I} \norm{u(t)}^{2}_{H}\e^{-2\rho t} \dx[t]\Bigr)^{\nicefrac{1}{2}}\text{,}
  \end{align*}
  where $\mathring{\mathcal{C}}^{\infty}\bigl(I;H\bigr)$ is the space of smooth functions with values in $H$ and compact support. The corresponding first order Sobolev space is given via
  \begin{align*}
    \sobh^{1}_{\rho}(I;H) &\coloneq \dset{u \in \leb^{2}_{\rho}(I;H)}{u' \in \leb^{2}_{\rho}(I;H)},\\
    \norm{u}_{\sobh^{1}_{\rho}}&\coloneq \big(\norm{u}_{2,\rho}^{2} + \norm{u'}_{2,\rho}^{2}\big)^{\nicefrac{1}{2}},
  \end{align*}
  where $u'$ denotes the distributional derivative.
\end{definition}
Higher order versions $\sobh^{k}_{\rho}(I;H)$ are defined accordingly. One advantage of the exponentially weighted space $\leb^{2}_{\rho}(\mathbb{R};H)$ on $I = \mathbb{R}$ is that the time derivative can be defined as the inverse of the antiderivative/Volterra operator:
\begin{definition}
  Let $\rho > 0$ and $H$ a Hilbert space. Let
  \begin{equation*}
    I_{\rho}\colon \leb^2_{\rho}(\mathbb{R};H) \to \leb^2_{\rho}(\mathbb{R};H), \quad \varphi \mapsto \Big( t\mapsto \medint\int_{-\infty}^{t}\varphi(s)\dx[s]\Big).
  \end{equation*}
  We define $\partial_{t,\rho}\coloneq I_{\rho}^{-1}$.
\end{definition}
That $I_{\rho}$ is an injective, bounded linear operator is verified in \cite[ch.~3]{AignerWaurick2026}, which enables the prior definition of $\partial_{t,\rho}$. The operator $\partial_{t,\rho}$ agrees with the distributional derivative, cf.~\cite[prop.~4.1.1]{STW2022}. Frequently the index $\rho$ will be omitted if $\rho$ is clear from the context or is assumed to be large enough; in that case we simply write $\partial_{t}$.
\begin{remark}[norm on $\sobh^{1}_{\rho}(\mathbb{R};H)$]
  Let $\rho \in \mathbb{R}$, $H$ a Hilbert space and $k \in \N$. In the case $I = \mathbb{R}$, $\sobh^{k}_{\rho}(\mathbb{R};H) =  \operatorname{dom}(\partial_{t,\rho}^{k})$ and $\norm{\argdot}_{\sobh^{k}_{\rho}}\coloneq \norm{\partial_{t,\rho}^{k}\argdot}_{\leb^2_{\rho}}$ provides an equivalent norm for $\rho >0$, which will be preferred in this work.
\end{remark}
The topical \cref{eq:EvolEq} will be studied on $\leb^2_{\rho}(\mathbb{R};H)$ or $\sobh^{k}_{\rho}(\mathbb{R};H)$. After the introduction of the spaces for \cref{eq:EvolEq}, next come the operators $M$, $N$ and $A$: First, $A\colon H\supseteq \operatorname{dom}(A)\to H$ is a maximal accretive, densely defined, linear operator on the Hilbert space $H$. $M$ and $N$ can be nonautonomous operators acting on $\leb^2_{\rho}(\mathbb{R};H)$ or $\sobh^{k}_{\rho}(\mathbb{R};H)$, but since it is a crucial ingredient in the solution theory to be able to vary the weight parameter $\rho$ as needed, the introduction of $M$ and $N$ begs a proper definition:
\begin{definition}[{\cite[sec.~4.2\&16.3]{STW2022}}]
  Let $H, H_{0}$ and $H_{1}$ be Hilbert spaces.
  \begin{itemize}[leftmargin = 4ex]
    \item The space of {\em simple functions with compact support} is
          \begin{equation*}
            \mathring{S}(\mathbb{R};H)\coloneq \dset{f\colon \mathbb{R}\to H}{f\;\text{simple},\; \mathrm{spt}(f)\;\text{compact}}.
          \end{equation*}
    \item A function
          \begin{equation*}
            F\colon \mathring{S}(\mathbb{R};H_{0})\to \bigcap_{\rho\geq \rho_{0}}\leb^2_{\rho}(\mathbb{R};H_{1})
          \end{equation*}
          is {\em uniformly Lipschitz-continuous} if there exists $\rho_{0}\in \mathbb{R}$ such that for all $\rho \geq \rho_{0}$, the map $F$ considered as a map in $\leb^2_{\rho}(\mathbb{R};H_{0}) \!\times \!\leb^2_{\rho}(\mathbb{R};H_{1})$ is Lipschitz-continuous and a finite Lipschitz-constant can be found uniformly w.r.t.\ $\rho$.
    \item A map
          \begin{equation*}
            M\colon \mathring{S}(\mathbb{R};H) \to \bigcap_{\rho\geq \rho_{0}}\leb^2_{\rho}(\mathbb{R};H)
          \end{equation*}
          is {\em evolutionary at $\rho_{0} \in \mathbb{R}$} if it is linear and uniformly Lipschitz-continuos at $\rho_{0}$, i.e.\ for all $\rho\geq \rho_{0}$, the map
          \begin{equation*}
            M\colon \leb^2_{\rho}(\mathbb{R};H)\supseteq \mathring{S}(\mathbb{R};H) \to \leb^2_{\rho}(\mathbb{R};H)
          \end{equation*}
          is linear and continuous. Its continuous extension to $\leb^2_{\rho}(\mathbb{R};H)$ is denoted $M^{\rho}$.
    \item Let $\rho_{0}\in \mathbb{R}$. The set of all evolutionary maps is defined as
          \begin{equation*}
            S_{\mathrm{ev}}(H,\rho_{0}) \coloneq \dset[\Big]{M\colon \mathring{S}(\mathbb{R};H) \to \bigcap_{\rho\geq \rho_{0}}\leb^2_{\rho}(\mathbb{R};H)}{M\,\text{is evolutionary at}\ \rho_{0}}.
          \end{equation*}
  \end{itemize}
\end{definition}
Lastly, a reminder on the following important notion:
\begin{definition}
  \label{def:Causality}
  Let $H_{0}$, $H_{1}$ be Hilbert spaces, $\rho \in \mathbb{R}$ and $F\colon \leb^2_{\rho}(\mathbb{R};H_{0}) \to \leb^2_{\rho}(\mathbb{R};H_{1})$. $F$ is called {\em causal} if
  \begin{equation*}
    \forall a \in \mathbb{R}\;\forall f,g \in \leb^2_{\rho}(\mathbb{R};H_{0})\colon f\vert_{(-\infty,a]}=g\vert_{(-\infty,a]}\implies F(f)\vert_{(-\infty,a]} = F(g)\vert_{(-\infty,a]}.
  \end{equation*}
\end{definition}

\subsubsection{Well-posedness in $\leb^2_{\rho}$}
The main result in the theory for nonautonomous evolutionary equations is the following generalization of the autonomous Picard's theorem \cite[thm.~6.2.1]{STW2022}:
\begin{theorem}[Well-posedness in $\leb^2_{\rho}$, {\cite[thm.~16.3.1]{STW2022}}]
  \label{th:Picard}
  Let $\rho_{0} \in \mathbb{R}$ and $M,M',N \in S_{\mathrm{ev}}(H,\rho_{0})$ such that $\Re M^{\rho} \geq 0$ for all $\rho \geq \rho_{0}$. Let $A\colon H \supseteq \operatorname{dom}(A)  \to H$ be maximal accretive. Assume that there exists $c>0$ such that
  \begin{enumerate}[label = (\roman*), leftmargin = 5ex]
    \item\label{it:Commutator} $M^{\rho_{0}}\partial_{t,\rho_{0}} \subseteq \partial_{t,\rho_{0}}M^{\rho_{0}} - (M')^{\rho_{0}}$.
    \item\label{it:Coercivity} for all $\rho \geq \rho_{0}$ and $\varphi \in \operatorname{dom}(\partial_{t,\rho})$,
          \begin{equation*}
            \Re \dualprod[\big]{\varphi}{(\partial_{t,\rho}M^{\rho} + N^{\rho})\varphi}_{\leb^2_{\rho}(\mathbb{R};H)} \geq c \norm{\varphi}^{2}_{\leb^2_{\rho}(\mathbb{R};H)}.
          \end{equation*}
  \end{enumerate}
  Then for all $\rho \geq \max \{\rho_{0},0\}$, $\rho \neq 0$, the operator
  \begin{equation*}
    \partial_{t,\rho}M^{\rho} + N^{\rho} + A \colon \leb^2_{\rho}(\mathbb{R};H)\supseteq \sobh^{1}_{\rho}(\mathbb{R};H)\cap \leb^2_{\rho}\big(\mathbb{R};\operatorname{dom}(A)\big) \to \leb^2_{\rho}(\mathbb{R};H)
  \end{equation*}
  is closable and its closure is continuously invertible.\\
  Moreover, let $S_{\rho} \in \mathcal{L}\big(\leb^2_{\rho}(\mathbb{R};H)\big)$ denote the inverse of this closure. Then
  \begin{itemize}[leftmargin = 4ex]
    \item $\norm{S_{\rho}}_{\mathcal{L}(\leb^2_{\rho}(\mathbb{R};H))} \leq \nicefrac{1}{c}$,
    \item $S_{\rho}$ is eventually independent of $\rho$ and
    \item $S_{\rho}$ is causal.
  \end{itemize}
\end{theorem}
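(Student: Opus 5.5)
The plan is the standard route for Picard-type theorems: establish a coercivity estimate for $B \coloneq \partial_{t,\rho}M^{\rho} + N^{\rho} + A$ on its natural domain $D_0 \coloneq \sobh^{1}_{\rho}(\mathbb{R};H)\cap \leb^2_{\rho}(\mathbb{R};\operatorname{dom}(A))$, establish the same estimate for the formal adjoint, and conclude that $\overline{B}$ is bijective. Fix $\rho \ge \max\{\rho_0,0\}$ with $\rho\neq 0$.

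\textbf{Step 1: coercivity of $B$.} For $\varphi\in D_0$ we have $\varphi(t)\in\operatorname{dom}(A)$ for almost every $t$. Since $A$ is accretive, $\Re\dualprod{\varphi}{A\varphi}_{\leb^2_\rho}\ge 0$. Combining this with \ref{it:Coercivity} gives
\[
  \Re\dualprod{\varphi}{B\varphi}\ge c\norm{\varphi}^2 ,
\]
hence $\norm{B\varphi}\ge c\norm{\varphi}$. Before using this, one checks that $B$ is well defined on $D_0$, that is, $M^\rho\varphi\in\operatorname{dom}(\partial_{t,\rho})$. This follows from \ref{it:Commutator}, transported from $\rho_0$ to every $\rho\ge\rho_0$ by the usual density and consistency argument for evolutionary maps, which uses that all these operators agree on $\mathring S$. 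Closability then follows from density of $D_0$ together with the density of the domain of the adjoint computed below. By the estimate, $\overline B$ is injective with closed range and $\norm{\overline B^{-1}}\le 1/c$ on its range.

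\textbf{Step 2: dense range, which I expect to be the main obstacle.} I would show $\ker B^\ast=\{0\}$ by proving an analogous coercivity estimate for the adjoint. Formally $B^\ast\supseteq (M^\rho)^\ast\partial_{t,\rho}^\ast + (N^\rho)^\ast + A^\ast$, with $\partial_{t,\rho}^\ast=-\partial_{t,\rho}+2\rho$. Using \ref{it:Commutator} one rewrites $(M^\rho)^\ast\partial_{t,\rho}^\ast$ on a suitable core as $\partial_{t,\rho}^\ast (M^\rho)^\ast$ plus a bounded term involving $(M')^{\rho\ast}$. Taking real parts, $\Re\dualprod{\psi}{B^\ast\psi}=\Re\dualprod{B\psi}{\psi}$ for $\psi$ in a common core, and $A^\ast$ is accretive because $A$ is maximal accretive.

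The difficulty is that an element of $\ker B^\ast$ need not lie in $D_0$. To handle this I would regularise. Apply the resolvent $(1+\varepsilon\partial_{t,\rho})^{-1}$ in time, or a Yosida approximation $A_\varepsilon$ of $A$. Prove uniform coercivity $c$ for the regularised operators, where the commutator identity \ref{it:Commutator} controls $[M^\rho,(1+\varepsilon\partial_t)^{-1}]$ by $O(\varepsilon)$ terms. Then let $\varepsilon\to 0$ to obtain $c\norm{\psi}^2\le \Re\dualprod{\psi}{B^\ast\psi}=0$.

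An alternative route is to first solve with $A$ replaced by its bounded Yosida approximation $A_\lambda$. In that case $\partial_tM+N+A_\lambda$ is treated as above, or by a fixed-point argument in $\leb^2_\rho$. One then passes to the limit using the uniform bound $1/c$ together with weak compactness and closedness. Whichever route is used, $\overline B$ is onto and $S_\rho=\overline B^{-1}$ satisfies $\norm{S_\rho}\le 1/c$.

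\textbf{Step 3: causality and independence of $\rho$.} For causality, let $f=0$ on $(-\infty,a]$ and $u=S_\rho f$. Applying the coercivity estimate with the cut-off $\chi_{(-\infty,a]}$ requires positivity of the truncated quadratic form. I would obtain it as follows. Because $\mathbb R\ni\mu\mapsto$ the estimate holds for all $\mu\ge\rho$ with the same $c$, we have $\norm{e^{-\mu\cdot}u}\le c^{-1}\norm{e^{-\mu\cdot}f}$. Since $f$ vanishes before $a$,
\[
  e^{\mu a}\norm{u\,\chi_{(-\infty,a]}}_{\leb^2_\mu}\le c^{-1}\norm{e^{-\mu(\cdot-a)}f}_{\leb^2},
\]
and the right-hand side stays bounded as $\mu\to\infty$. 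Letting $\mu\to\infty$ yields $u=0$ on $(-\infty,a]$. This uses independence of $\rho$, which I would prove first. For $f\in \leb^2_\rho\cap \leb^2_\mu$, for instance with compact support, the solutions coincide: both are limits of solutions on the common core, and $M^\rho$, $M^\mu$ agree on $\leb^2_\rho\cap\leb^2_\mu$ by density of $\mathring S$ in both spaces.
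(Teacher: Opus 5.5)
The paper gives no proof of this statement; it quotes \cite[thm.~16.3.1]{STW2022} and only remarks that $\Re M^{\rho}\geq 0$ is needed just for causality. Your Steps~1 and~2 follow the standard route and are sound in outline, but Step~2 should be sharpened in two places.

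First, the relevant commutator is with $\partial_{t,\rho}M^{\rho}$, not with $M^{\rho}$. For $R_{\varepsilon}=(1+\varepsilon\partial_{t,\rho})^{-1}$ one has $[R_{\varepsilon},\partial_{t,\rho}M^{\rho}]=-(1-R_{\varepsilon})(M')^{\rho}R_{\varepsilon}$. This is uniformly bounded but tends to $0$ only strongly, so ``$O(\varepsilon)$'' is not the right mechanism. With it, for $\psi\in\ker B^{\ast}$ the function $\psi_{\varepsilon}=R_{\varepsilon}^{\ast}\psi$ lies in $\operatorname{dom}(B^{\ast})$ and $B^{\ast}\psi_{\varepsilon}\to 0$.

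Second, you do not need a common core of $A$ and $A^{\ast}$, which may fail to be dense. Instead, show $\psi_{\varepsilon}\in\sobh^{1}_{\rho}(\mathbb{R};H)\cap\leb^{2}_{\rho}(\mathbb{R};\operatorname{dom}(A^{\ast}))$. On that set, $\Re\langle B^{\ast}\phi,\phi\rangle\geq c\norm{\phi}^{2}$ follows directly from (ii) and accretivity of $A^{\ast}$.

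Your Yosida variant does not close without exactly this core property of $B^{\ast}$.

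The genuine gap is in Step~3: independence of $\rho$ is not proved, and your causality argument rests entirely on it.

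That $M^{\rho}$ and $M^{\mu}$ agree on $\leb^2_{\rho}\cap\leb^2_{\mu}$, and that both solutions are ``limits on a common core'', does not give $S_{\rho}f=S_{\mu}f$. A priori $S_{\rho}f$ is only an element of $\leb^2_{\rho}$. The approximants $u_{n}\in D_{0}$ with $u_{n}\to S_{\rho}f$ and $Bu_{n}\to f$ in $\leb^2_{\rho}$ need not converge in $\leb^2_{\mu}$. Dense range of $B$ in $\leb^2_{\rho}$ and in $\leb^2_{\mu}$ separately does not produce a sequence $B\varphi_{n}$ approximating $f$ in both norms at once. What you need is an additional dense-range statement, for $B$ on $D_{0}^{(\rho)}\cap D_{0}^{(\mu)}$ in $\leb^2_{\rho}\cap\leb^2_{\mu}$ with the sum norm. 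Your argument also never uses the hypothesis $\Re M^{\rho}\geq 0$, which the paper singles out as the causality ingredient in the cited proof. That is acceptable only if independence is obtained without appeal to causality, and you have not done this.

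One way to close the gap within your approach is to rerun Steps~1--2 on the Hilbert space $\leb^2_{\rho}\cap\leb^2_{\mu}$ with inner product $\langle\cdot,\cdot\rangle_{\rho}+\langle\cdot,\cdot\rangle_{\mu}$. This works because:
\begin{itemize}
\item coercivity (ii) at $\rho$ and at $\mu$ simply adds;
\item $M$, $M'$, $N$ act consistently on this space;
\item $(1+\varepsilon\partial_{t})^{-1}$ is a contraction there;
\item the adjoint of $\partial_{t}$ is $-\partial_{t}$ plus a bounded multiplication operator with values in $[2\rho,2\mu]$.
\end{itemize}
The same regularisation then yields invertibility on the intersection space. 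The resulting solution lies in the domains of both closures, hence equals both $S_{\rho}f$ and $S_{\mu}f$. After that, your $\mu\to\infty$ argument gives causality.
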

This theorem establishes well-posedness of problem \eqref{eq:EvolEq} for right-hand sides $f \in \leb^2_{\rho}(\mathbb{R};H)$ by virtue of providing a solution operator $S_{\rho} = \big(\overline{\partial_{t}M+N+A}\big)^{-1} \in \mathcal{L}\big(\leb^2_{\rho}(\mathbb{R};H)\big)$. We employ a corresponding notion of weak solutions:
\begin{definition}
  \label{def:weakSol}
  For $\rho > s_{\mathrm{b}}(M)$ and a right-hand side $g \in \leb^2_{\rho}(0,\infty;H)$, we refer to $u=S_{\rho}g$ as a {\em solution} to
  \begin{equation*}
    \big[\partial_{t}M + N + A\big] u = g.
  \end{equation*}
\end{definition}

\begin{remark}\phantom{.}
  \begin{itemize}[leftmargin = 4ex]
    \item The condition $\Re M^{\rho} \geq 0$ for all $\rho \geq \rho_{0}$ in \cref{th:Picard} is only required for causality of the solution operator $S_{\rho}$.
    \item Condition \ref{it:Commutator} is always satisfied in the autonomous case appealing to tensor product structure; $M \partial_{t,\rho} \subseteq \partial_{t,\rho}M$ and thus $M' = 0$.
    \item Condition \ref{it:Coercivity} in the autonomous case is satisfied in particular when $M$ is selfadjoint and the conditions $M\vert_{\operatorname{ran}(M)} \geq c_{0}>0$ and $\Re N\vert_{\ker M}\geq c_{1}>0$ are satisfied.
  \end{itemize}
\end{remark}

\subsubsection{Well-posedness in $\sobh^{k}_{\rho}$}
It will turn out that well-posedness in $\leb^2_{\rho}(\mathbb{R};H)$ assured by \cref{th:Picard} does not suffice for the general nonautonomous case of \cref{eq:EvolEq}. In addition, well-posedness in $\sobh^{k}_{\rho}(\mathbb{R};H)$ will be required for $k \in \{1,2\}$. Existence and uniqueness of solutions with higher regularity can be verified following the approach from \cite[sec.~3]{TrostorffWaurick2021} verbatim by exchanging $\sobh^{\nicefrac{1}{2}}_{\rho}(\mathbb{R};H)$ there with $\sobh^{k}_{\rho}(\mathbb{R};H)$ here. On a technical level, the approach mostly relies on algebraic transformations and the arguments hold true irrespective of the considered exponent.\footnote{In fact, the statement holds true for $\sobh^{\alpha}_{\rho}(\mathbb{R};H)$ for any $\alpha \in \mathbb{R}_{>0}$.} The result is the following existence and uniqueness result.

\begin{theorem}[Well-posedness in $\sobh^{k}_{\rho}$]
  \label{th:PicardHigh}
  Let $k \in \mathbb{N}$, $\rho_{0} \in \mathbb{R}$ and $A\colon \operatorname{dom}(A)\subseteq H \to H$ be maximal accretive. Let $M,M',N \in S_{\mathrm{ev}}(H,\rho_{0})$  with $\Re M^{\rho} \geq 0$ for all $\rho \geq \rho_{0}$. Assume that there exists $c>0$ such that
  \begin{enumerate}[label = (\roman*), leftmargin = 5ex]
    \item $M^{\rho_{0}}\vert_{\sobh^{k}_{\rho}}, (M')^{\rho_{0}}\vert_{\sobh^{k}_{\rho}},N^{\rho_{0}}\vert_{\sobh^{k}_{\rho}} \in \mathcal{L}\big(\sobh^{k}_{\rho}(\mathbb{R};H)\big)$.
    \item $M^{\rho_{0}}\partial_{t,\rho_{0}} \subseteq \partial_{t}M^{\rho_{0}} - (M')^{\rho_{0}}$.
    \item for all $\rho \geq \rho_{0}$ and $\varphi \in \operatorname{dom}\big(\partial_{t,\rho}^{k+1}\big)$ one has
          \begin{equation*}
            \Re \dualprod[\big]{\varphi}{(\partial_{t,\rho}M^{\rho} + N^{\rho})\varphi}_{\sobh^{k}_{\rho}(\mathbb{R};H)} \geq c \norm{\varphi}^{2}_{\sobh^{k}_{\rho}(\mathbb{R};H)}.
          \end{equation*}
  \end{enumerate}
  Then for all $\rho \geq \max \{\rho_{0},0\}$, $\rho \neq 0$, the operator
  \begin{equation*}
    \partial_{t,\rho}M^{\rho} + N^{\rho} + A \colon \sobh^{k}_{\rho}(\mathbb{R};H)\supseteq \sobh^{k+1}_{\rho}(\mathbb{R};H)\cap \sobh^{k}_{\rho}\big(\mathbb{R};\operatorname{dom}(A)\big) \to \sobh^{k}_{\rho}(\mathbb{R};H)
  \end{equation*}
  is closable and its closure is continuously invertible. Moreover, let $S_{\rho} \in \mathcal{L}\big(\sobh^{k}_{\rho}(\mathbb{R};H)\big)$ denote the inverse of this closure. Then
  \begin{itemize}[leftmargin = 4ex]
    \item $\norm{S_{\rho}}_{\mathcal{L}(\sobh^{k}_{\rho}(\mathbb{R};H))} \leq \nicefrac{1}{c}$,
    \item $S_{\rho}$ is eventually independent of $\rho$ and
    \item $S_{\rho}$ is causal.
  \end{itemize}
\end{theorem}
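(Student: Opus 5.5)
The plan is to reduce \cref{th:PicardHigh} to \cref{th:Picard} by conjugating with $\partial_{t,\rho}^{k}$. For $\rho>0$ the operator $\partial_{t,\rho}^{k}\colon \sobh^{k}_{\rho}(\mathbb{R};H)\to \leb^2_{\rho}(\mathbb{R};H)$ is unitary with respect to the norm $\norm{\partial_{t,\rho}^{k}\argdot}_{\leb^2_{\rho}}$ fixed in the remark on the norm. The $\sobh^{k}_{\rho}$ problem is therefore unitarily equivalent to an $\leb^2_{\rho}$ problem with transformed coefficients
\begin{equation*}
  \widetilde{M}\coloneq \partial_{t}^{k}M\partial_{t}^{-k},\qquad \widetilde{N}\coloneq \partial_{t}^{k}N\partial_{t}^{-k},\qquad \widetilde{M}'\coloneq\partial_{t}^{k}M'\partial_{t}^{-k}.
\end{equation*}
The spatial operator $A$ commutes with $\partial_{t}^{k}$, since it acts pointwise in time and $\partial_{t}^{-k}=I_{\rho}^{k}$ maps $\leb^2_{\rho}(\mathbb{R};\operatorname{dom}(A))$ into itself. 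By assumption (i), $\widetilde{M},\widetilde{N},\widetilde{M}'$ are bounded on $\leb^2_{\rho}(\mathbb{R};H)$ with norms uniform in $\rho$. They are also linear, so they belong to $S_{\mathrm{ev}}(H,\rho_{0})$ after restricting to $\rho\ge\max\{\rho_0,0\}$, $\rho\neq0$.

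Next I verify the hypotheses of \cref{th:Picard} for the transformed coefficients.
\begin{itemize}
  \item \emph{Commutator relation.} Conjugating $M\partial_{t}\subseteq\partial_{t}M-M'$ by $\partial_{t}^{k}$ gives $\widetilde{M}\partial_{t}\subseteq\partial_{t}\widetilde{M}-\widetilde{M}'$, because $\partial_{t}$ commutes with its own powers. The only care needed is the domains: one must check that $\partial_t^{-k}$ maps $\operatorname{dom}(\partial_t)$ onto $\operatorname{dom}(\partial_t^{k+1})$.
  \item \emph{Coercivity.} For $\psi\in\operatorname{dom}(\partial_{t})$ put $\varphi=\partial_{t}^{-k}\psi\in\operatorname{dom}(\partial_{t}^{k+1})$. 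Then
  \begin{equation*}
    \Re\dualprod{\psi}{(\partial_{t}\widetilde{M}+\widetilde{N})\psi}_{\leb^2_\rho}=\Re\dualprod{\varphi}{(\partial_{t}M+N)\varphi}_{\sobh^{k}_\rho}\ge c\norm{\varphi}^2_{\sobh^k_\rho}=c\norm{\psi}^2_{\leb^2_\rho},
  \end{equation*}
  which is exactly assumption (iii).
  \item \emph{Positivity.} The condition $\Re \widetilde{M}\ge0$ is needed only for causality, and I will handle causality directly instead.
\end{itemize}
\cref{th:Picard} then gives a closable operator $\partial_t\widetilde M+\widetilde N+A$ whose closure has an inverse $\widetilde S_\rho$ with $\norm{\widetilde S_\rho}\le 1/c$. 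Setting $S_\rho\coloneq\partial_t^{-k}\widetilde S_\rho\partial_t^{k}$, I check that its inverse is the closure of the stated operator on $\sobh^{k+1}_\rho\cap\sobh^k_\rho(\operatorname{dom}A)$. This works because $\partial_t^{-k}$ maps the core $\sobh^1_\rho\cap\leb^2_\rho(\operatorname{dom}A)$ bijectively onto that domain, and unitary equivalence preserves closures. The norm bound transfers for the same reason.

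Independence of $\rho$ and causality are not immediate from the conjugation, since $\partial_{t,\rho}^{-k}=I_\rho^k$ is independent of $\rho$ on the intersections and is causal, but its inverse is only a closed operator. I would argue as in \cite[sec.~3]{TrostorffWaurick2021} that $S_\rho$ coincides with the restriction of the $\leb^2_\rho$ solution operator from \cref{th:Picard} for the original $M,N$. Those original coefficients satisfy $\leb^2$-coercivity after enlarging $\rho$; if that cannot be verified directly, I would instead prove causality by hand from the coercivity estimate truncated at time $a$, as in Picard's theorem. For $f\in\sobh^k_\rho$, the element $u=S_\rho f$ solves the closed $\leb^2$ equation, so uniqueness identifies it with the $\leb^2$ solution, and causality and $\rho$-independence are inherited.

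I expect this identification step to be the main obstacle. It requires showing that the $\sobh^k$-closure is contained in the $\leb^2$-closure, and that the original $\leb^2$ problem is well posed, or at least injective, under the present hypotheses.
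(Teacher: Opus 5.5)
Your conjugation step works. For the chosen norm, $\partial_{t,\rho}^{k}\colon\sobh^{k}_{\rho}(\mathbb{R};H)\to\leb^2_{\rho}(\mathbb{R};H)$ is unitary, so the commutator relation and the coercivity (iii) transfer exactly as you compute. \Cref{th:Picard} then gives closability, invertibility of the closure and $\norm{S_\rho}\le 1/c$; by the remark after that theorem, its positivity hypothesis is only needed for causality. This is a legitimate way to obtain the part the paper defers to \cite[sec.~3]{TrostorffWaurick2021}. One caveat: (i) as stated only gives boundedness on each $\sobh^k_\rho$, not the uniformity in $\rho$ you assert, and that uniformity is needed for $\widetilde M,\widetilde N,\widetilde M'\in S_{\mathrm{ev}}$.

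The gap is in the last step, and as written causality is not proved. Your main route identifies $S_\rho$ with the $\leb^2_\rho$ solution operator of the original $(M,N)$. That needs $\leb^2_\rho$-coercivity of $\partial_t M+N$, which is not a hypothesis of this theorem. Your fallback is not available either. No truncated coercivity estimate is assumed, and multiplication by $\chi_{(-\infty,a]}$ does not preserve $\sobh^k_\rho$, so the $\leb^2$ truncation trick has nothing to act on.

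The missing idea, which the paper uses, is that the uniform bound together with $\rho$-independence already implies causality. This needs no positivity and no $\leb^2$ theory for the original coefficients.

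First, $\rho$-independence does pass through your conjugation. Your objection that $\partial_{t,\rho}^{k}$ is ``only a closed operator'' is beside the point. On $\sobh^k_\rho$ it is the distributional derivative, and $\partial_{t,\rho}^{-k}=I_\rho^k$ is the iterated integral. Both are independent of $\rho$ on intersections, and both preserve vanishing on $(-\infty,a]$. Hence eventual independence of $\widetilde S_\rho$ gives that of $S_\rho=\partial_{t,\rho}^{-k}\widetilde S_\rho\partial_{t,\rho}^{k}$.

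Second, for $f\in\sobh^k_\rho$,
\[
\norm{S_\rho f}_{\leb^2_\rho}=\norm{I_\rho^k\partial_{t,\rho}^k S_\rho f}_{\leb^2_\rho}\le\rho^{-k}\norm{S_\rho f}_{\sobh^k_\rho}\le\tfrac{1}{c\rho^k}\norm{\partial_{t,\rho}^kf}_{\leb^2_\rho}.
\]
Now suppose $f=0$ on $(-\infty,a]$, so that $\partial_t^kf=0$ there as well, and let $Sf$ denote the $\rho$-independent solution. Then
\[
\int_{-\infty}^{a}\norm{(Sf)(s)}_H^2\dx[s]\le\e^{2\rho a}\norm{Sf}_{\leb^2_\rho}^2\le\frac{1}{c^2\rho^{2k}}\int_a^\infty\norm{(\partial_t^kf)(s)}_H^2\,\e^{2\rho(a-s)}\dx[s]\xrightarrow[]{\rho\to\infty}0 .
\]
Hence $Sf=0$ on $(-\infty,a]$, which by linearity is causality. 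Equivalently, you can run this argument for $\widetilde S_\rho$, which is uniformly bounded by $1/c$, and conjugate back.
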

Succinctly put, one only needs to replace the coercivity estimate in $\leb^2_{\rho}(\mathbb{R};H)$ with one in $\sobh^{k}_{\rho}(\mathbb{R};H)$.

\begin{proof}
  All statements follow by the same arguments as in \cite[sec.~3]{TrostorffWaurick2021}; the only missing statement there is the causality. For this, we first observe that for $f \in \sobh^{k}_{\rho}(\mathbb{R};H)$,
  \begin{equation*}
    \norm{S_{\rho}f}_{\leb^2_{\rho}}
    = \norm{I_{\rho}^{k}S_{\rho}f}_{\sobh^{k}_{\rho}}
    \leq \tfrac{1}{\rho^{k}} \norm{S_{\rho}f}_{\sobh^{k}_{\rho}}
    \leq \tfrac{1}{c\rho^{k}} \norm{f}_{\sobh^{k}_{\rho}}
    = \tfrac{1}{c\rho^{k}}\norm{\partial_{t,\rho}^{k}f}_{\leb^2_{\rho}},
  \end{equation*}
  where the fact $\norm{I_{\rho}} = \tfrac{1}{\rho}$ was used as well as the estimate $\norm{S_{\rho}}_{\mathcal{L}(\sobh^{k}_{\rho})}\leq \tfrac{1}{c}$. Now let $f \in \sobh^{k}_{\rho}(\mathbb{R};H)$ with $\mathrm{spt}(f) \subseteq (\alpha,\infty)$. A similar calculation as in the proof of \cite[lem.~4.2.5(ii)]{STW2022} shows
  \begin{align*}
    \medint\int_{-\infty}^{a} \norm{(S_{\rho}f)(s)}^{2}\e^{2\rho(a -s)}\dx[s]
    &= \e^{2\rho a}\norm{\chi_{(-\infty,a]}S_{\rho}f}_{\leb^2_{\rho}}^{2}\\
    &\leq \e^{2\rho a}\norm{S_{\rho}f}_{\leb^2_{\rho}}^{2}\\
    &\leq \e^{2\rho a}\tfrac{1}{c\rho^{k}}\norm{\partial_{t,\rho}^{k}f}_{\leb^{2}_{\rho}}\\
    &= \tfrac{1}{c\rho^{k}} \medint\int_{\mathbb{R}} \norm{\partial_{t,\rho}^{k}f (s)}^{s} \e^{2\rho (a-s)}\dx[s].
  \end{align*}
  The assumption $\mathrm{spt}f \subseteq (\alpha,\infty)$ in particular implies that $\mathrm{spt}(\partial_{t,\rho}^{k}f) \subseteq (\alpha,\infty)$ and thus an application of the monotone convergence theorem implies
  \begin{equation*}
    \medint\int_{-\infty}^{a} \norm{(S_{\rho}f)(s)}^{2}\e^{2\rho(a -s)}\dx[s] \leq  \tfrac{1}{c\rho^{k}} \medint\int_{a}^{\infty} \norm{\partial_{t,\rho}^{k}f (s)}^{s} \e^{2\rho (a-s)}\dx[s] \xrightarrow[]{\rho \to \infty} 0.
  \end{equation*}
  Consequently, $\mathrm{spt}(S_{\rho}f)\subseteq (\alpha,\infty)$.
\end{proof}

\newpage
\subsection{Evolutionary equations with state-dependent delay}
\label{subsec:EvolEqSDD}

\subsubsection{Key notions for well-posedness}
\label{subsubsec:DefintionsSDD}
To start off, some basic terminology and tools necessitated by the presence of state-dependent delay in the inhomogeneity of \cref{eq:EvolEq}, i.e.
\begin{equation*}
  [\partial_{t}M + N + A]u = F(\argdot, u_{(\argdot)}),
\end{equation*}
is recalled. Some key notions and findings of \cite{AignerWaurick2026} are recapitulated before the central results \cite[thm.~4.1\&4.2]{AignerWaurick2026} are subsequently generalized to the nonautonomous case. The framework from \cite{AignerWaurick2026} requires several assumptions on the right-hand side of \cref{eq:EvolEq}. For existence and uniqueness a Lipschitz-property has to be assumed.
\begin{definition}
  \label{def:AlmLC}
  A function $G\colon [0,\infty) \!\times\! \leb^{2}(-h,0;H) \to H$ is {\em almost uniformly Lipschitz-continuous} if it is continuous and for all $\alpha >0$ there exists $L_{\alpha}>0$ s.t.
  \begin{equation*}
    \forall t\in [0,\infty)\,\forall \phi,\psi \in V_{\alpha}\colon \norm{G(t,\phi) - G(t,\psi)}_{H}\leq L_{\alpha}\norm{\phi - \psi}_{\sobh^{1}(-h,0;H)},
  \end{equation*}
  where we set $V_{\alpha}\coloneq \dset{u \in \sobh^{1}(-h,0;H)}{\norm{u'}_{\infty}\leq \alpha}\subseteq \leb^{2}(-h,0;H)$.
\end{definition}
Already in the  ODE case (i.e.\ $M = 1$, $N = 0 = A$), Lipschitz-continuity of prehistories is required for uniqueness of solutions, cf.~\cite[ex.~4.3]{Waurick2023}. This fact is reflected in the definition of  $V_{\alpha}$. Note that $\bigcup_{\alpha >0}V_{\alpha} = \mathrm{W}^{1,\infty}(-h,0;H)$. The method to show existence and uniqueness of a local solution will rely on an application of the contraction mapping principle. More specifically, one aims to solve a fixed point problem on $V_{\alpha}$ first and tries to recover a local solution of the original problem. A key ingredient for the latter in \cite{AignerWaurick2026} was regularity theory for the solution operator $S_{\rho}$ provided by the autonomous version of \cref{th:Picard}, cf.~\cite[thm.~6.2.1]{STW2022}. In particular, the fixed point argument in the corresponding proofs of \cite[thm.~4.1\&4.2]{AignerWaurick2026} require that the regularity of the unknown can be pulled through the solution operator.
\begin{definition}
  \label{def:RegPres}
  $F$ is called {\em regularity preserving} if for $u \in \sobh^{1}_{\rho}(0,\infty;H)$ with $\norm{u'}_{\infty}<\infty$, the function $t\mapsto F\bigl(t, u_{(t)}\bigr)$ defines an element of $\sobh^{1}_{\rho}(0,\infty;H)$.
\end{definition}

\subsubsection{A note on initial value problems}
\label{subsubsec:InitialValues}
Throughout this section, let $A\colon H\supseteq \operatorname{dom}(A)\to H$ be a maximal accretive, linear operator and let $M,M'$ and $N$ satisfy the assumptions of \cref{th:Picard} for all $\rho>0$ large enough. In the context of initial conditions, the problem one aims to solve is the informal problem
\begin{equation*}
  \left\{
    \begin{aligned}
      \big[\partial_{t}M(t) + N(t) + A\big]u(t) &= F\big(t,u_{(t)}\big),\quad t>0,\\
      u_{(0)} &= \Phi,
    \end{aligned}
  \right.
\end{equation*}
for some $\Phi \in \sobw^{1,\infty}(-h,0;H)\subseteq \sobh^{1}(-h,0;H)$. However, the first equation is not well-defined, because evolutionary equations need to be posed on the entire real axis. One can remedy this problem by transitioning to a distributional formulation in $\sobh^{-1}_{\rho}(\mathbb{R};H) \coloneq \big(\sobh^{1}(\mathbb{R};H)\big)'$, cf.~\cite{Trostorff2018}, resulting in an equation
\begin{equation*}
  \big[\partial_{t}M + N + A\big]v = f + F\big(\argdot, (v+Z)_{(\argdot)}\big) \qquad \text{in}\ \sobh^{-1}(\mathbb{R};H),
\end{equation*}
where one makes the ansatz $u = v + Z$ with $v$ having support only on $[0,\infty)$ and $Z$ being a continuation of the initial prehistory. By additionally requiring that the functions on the right-hand side are in $\leb^2_{\rho}(\mathbb{R};H)$, the problem at hand takes the form
\begin{equation}
  \label{eq:EvolEqSDD}
  \big[\partial_{t}M + N + A\big]v = f + F\big(\argdot, (v + Z)_{(\argdot)}\big).
\end{equation}
This is the general class of equations studied in \cite{AignerWaurick2026}. As argued there, the motivation for the transition to this case is that the focus of this investigation lies on the well-posedness aspects of the problem and not in the challenging task of identifying the ``correct'' spaces for initial data for (generalized) differential algebraic equations. We simply point out that for simple enough cases of $M$ and $N$ (e.g.\ autonomous $M$ and $N$) it is no challenge to identify a suitable $f$, which encodes the interaction of the initial prehistory with the operator on the left, cf.~\cite[sec.~5\&6]{AignerWaurick2026} and \cite{Trostorff2018} for the general formulation of initial value problems within the framework of evolutionary equations. In this article, when talking about solvability of \cref{eq:EvolEq}, it will be implicitly understood to refer to solvability of \cref{eq:EvolEqSDD}.

\subsubsection{Fixed point problem}
Provided that the assumptions of \cref{th:Picard} hold, the solution operator $S_{\rho}$ can be applied to \cref{eq:EvolEqSDD} and the resulting equation can be used to pose a fixed point problem for the following map:
\begin{equation}
  \label{eq:FPP}
  \Gamma_{\rho}\colon \leb^2_{\rho}(\mathbb{R};H) \to \leb^2_{\rho}(\mathbb{R};H), \quad v \mapsto S_{\rho}\big[f + F(\argdot, (v + Z)_{(\argdot)})\big].
\end{equation}
One can modify this map by considering a projected version, utilizing the metric projection $\pi_{\alpha}\colon \leb^2(-h,0;H)\to V_{\alpha}$ onto the subset $V_{\alpha}$ from \cref{def:AlmLC}. Note that $\pi_{\alpha}$ exists, because $V_{\alpha}$ is a closed and convex subset of $\sobh^{1}(-h,0;H)$, cf.~\cite[lem.~3.3]{AignerWaurick2026}. The modified fixed point map reads
\begin{align}
  \begin{aligned}
    \label{eq:FPPmod}
    \Gamma_{\rho,\alpha}\colon \, \leb^2_{\rho}(0,\infty;H)&\rightarrow \leb^2_{\rho}(0,\infty;H)\text{,}\\
    v &\mapsto S_{\rho}f + S_{\rho}F\bigl(\argdot,\pi_{\alpha}((v+Z)_{(\argdot)})\bigr).
  \end{aligned}
\end{align}
Correspondingly, we will understand as a local solution of \cref{eq:EvolEqSDD} a fixed point of $\Gamma_{\rho,\alpha}$ that locally around $t=0$ does not require the metric projection:
\begin{definition}
  A {\em local} solution $v \in \leb^2_{\rho}(0,\infty;H)$ is a fixed point of $\Gamma_{\rho,\alpha}$ for some $\alpha>0$, that satisfies $\pi_{\alpha}\bigl((v+Z)_{(t)}\bigr)=(v+Z)_{(t)}$ for $t\in [0,T]$ up to some $T>0$.
\end{definition}

\subsubsection{Assumptions for well-posedness}
Finally, we recall the assumptions for well-posedness from \cite{AignerWaurick2026}, which will be used in the nonautonomous case as well.
\begin{assumptions}
  \label{assumptions}
  \phantom{.}
  \begin{enumerate}[label=(\Alph*), leftmargin=5ex]
    \item \label{ass:A} Let $G\colon [0,\infty) \!\times \!\leb^2(-h,0;H)\to H$
          \begin{enumerate}[label=\roman*), leftmargin=5ex]
            \item be almost uniformly Lipschitz-continuous (cf.~\cref{def:AlmLC}),
            \item regularity preserving (cf.~\cref{def:RegPres})
            \item and satisfy $G(\argdot,0)\in \leb^{2}_{\rho}(0,\infty;H)$.
          \end{enumerate}
    \item \label{ass:B} Let $\Phi \in \sobh^{1}(-h,0;H)$ with $\norm{\Phi'}_{\infty}<\infty$ and let $Z$ be an $\sobh^{1}_{\rho}(-h,\infty;H)$-extension of $\Phi$ satisfying
          \begin{enumerate}[label=\roman*), leftmargin=5ex]
            \item $Z\vert_{(-h,0]}=\Phi$ and
            \item $Z\vert_{[0,\infty)}\in \mathcal{C}^{1}(0,\infty;H)$ with $\norm{Z'(0)}_{H}\leq \norm{\Phi'}_{\infty}$.
          \end{enumerate}
  \end{enumerate}
\end{assumptions}

The developed well-posedness theory in \cite{AignerWaurick2026} relies on the inhomogeneity carrying state-dependent delay having a specific form. Let
\begin{align*}
  \widetilde{F}(v)(t) &\coloneq F\bigl(t, (v + Z)_{(t)}\bigr)\text{,}\\
  \widetilde{F}_{\alpha}(v)(t) &\coloneq F\bigl(t, \pi_{\alpha}\bigl((v + Z)_{(t)}\bigr)\bigr)\text{.}
\end{align*}
The cases considered in \cite{AignerWaurick2026} were
\begin{enumerate}[leftmargin = 4ex]
  \item $\widetilde{F}(v)=I_{\rho}\widetilde{G}(v)$.\\
        Here, $F\bigl(t,(v + Z)_{(t)}\bigr)=I_{\rho}G\bigl(\argdot,(v+Z)_{(\argdot)}\bigr)(t)$ with $G$ satisfying assumption \ref{ass:A}, which gives rise to the fixed point problem
        \begin{equation}
          \label{eq:FPPIntOut}
          v = S_{\rho}\bigl[f + I_{\rho}G\bigl(\argdot,(v+Z)_{(\argdot)}\bigr)\bigr]
          \qquad\text{in}\,\leb^2_{\rho}(0,\infty;H)\text{.}
        \end{equation}
  \item $\widetilde{F}(v)=\widetilde{G}(I_{\rho}v)$.\\
        Here, $F\bigl(t,(v + Z)_{(t)}\bigr)=G\bigl(t,(I_{\rho}v + Z)_{(t)}\bigr)$ with $G$ satisfying assumption \ref{ass:A}, which gives rise to the fixed point problem
        \begin{equation}
          \label{eq:FPPIntIns}
          v = S_{\rho}\bigl[f + G\bigl(\argdot,(I_{\rho}v+Z)_{(\argdot)}\bigr)\bigr]
          \qquad\text{in}\,\leb^2_{\rho}(0,\infty;H)\text{.}
        \end{equation}
\end{enumerate}
In either case, it should be noted that the right-hand side of \eqref{eq:FPPmod} has support only on $[0,\infty)$, because $f$ and $G$ have and the solution operator $S_{\rho}$ is causal.

\subsection{Well-posedness for the non-autonomous case}
\label{subsec:WellPosedness}
We study the two cases mentioned above. Since the proofs of the respective well-posedness results are only slight modifications of the corresponding proofs from \cite{AignerWaurick2026}, nothing but the necessary changes in the proofs are pointed out here. However, because the results are key to the investigation in this article, full proofs are provided in \cref{app:WellPosedness}.

\subsubsection*{The case $\widetilde{F}(v)=I_{\rho}\widetilde{G}(v)$}
\begin{theorem}[local existence and uniqueness for \cref{eq:FPPIntOut}]
  \label{th:localExistOut}
  Let $G$ satisfy assumption \ref{ass:A} and let $\Phi$ and $Z$ satisfy assumption \ref{ass:B}. Let $A$, $M$, $M'$ and $N$ satisfy the assumptions of \cref{th:Picard} and \cref{th:PicardHigh} for $k = 1$ and $k=2$. Let $f \in \sobh_{\tilde{\rho}}^{2}(0,\infty;H)$ for some $\tilde{\rho} >0$ satisfy $f(0) = 0$ and $G(0,\Phi) = - f'(0)$. Then \cref{eq:FPPIntOut} has a unique local solution $u\in \sobh^{2}(0,T;H)$.
\end{theorem}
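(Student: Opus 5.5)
We follow the contraction strategy of \cite[thm.~4.1]{AignerWaurick2026} and change only the ingredients that used autonomy of $M$ and $N$. Fix $\alpha>\norm{\Phi'}_{\infty}$ and consider the projected map $\Gamma_{\rho,\alpha}v=S_{\rho}f+S_{\rho}I_{\rho}G\bigl(\argdot,\pi_{\alpha}((v+Z)_{(\argdot)})\bigr)$ of \eqref{eq:FPPmod} on $\leb^2_{\rho}(0,\infty;H)$. Since $\pi_\alpha$ takes values in $V_\alpha$, almost uniform Lipschitz continuity of $G$ applies with the constant $L_\alpha$. One checks that $t\mapsto\norm{(w)_{(t)}}_{\sobh^1(-h,0;H)}$ is controlled in $\leb^2_\rho$ by $\e^{\rho h}$ times $\norm{w}_{\sobh^1_\rho}$, and that $\pi_\alpha$ is $1$-Lipschitz in $\sobh^1(-h,0;H)$. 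Hence the Lipschitz constant of $\Gamma_{\rho,\alpha}$ on a suitable space is at most $\norm{S_\rho}\,\norm{I_\rho}\,L_\alpha\,C(\rho)$. The Lipschitz bound needs $\sobh^1$-norms of histories, so we work in $\sobh^{1}_{\rho}$ rather than $\leb^2_\rho$ and use \cref{th:PicardHigh} with $k=1$: there $\norm{S_\rho}_{\mathcal L(\sobh^1_\rho)}\le 1/c$. The factor $I_\rho$ moves $\leb^2_\rho$-control of $G$ into $\sobh^1_\rho$-control with $\norm{I_\rho}=1/\rho$. We then choose $\rho$ large so that the product is $<1$; as in \cite{AignerWaurick2026}, the $\e^{\rho h}$ factor is avoided by working with the history directly through $\chi$-cutoffs, which is routine. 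Banach's fixed point theorem gives a unique fixed point $v$. Causality of $S_\rho$ (from \cref{th:Picard} and \cref{th:PicardHigh}) together with the support of $f$ and $G$ gives $\operatorname{spt}v\subseteq[0,\infty)$. By eventual independence of $S_\rho$ from $\rho$, the fixed point does not depend on $\rho$ once $\rho\ge\tilde\rho$ is large.

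Next we show that the projection is inactive near $t=0$, i.e.\ $\norm{(v+Z)'}_{\infty}\le\alpha$ on a small interval $[0,T]$. For this we need $v\in\sobh^{2}$, so that $v'$ is continuous, and $v'(0)=0$. We apply \cref{th:PicardHigh} with $k=2$ to the right-hand side. The function $f\in\sobh^2_{\tilde\rho}$, and $I_\rho G(\argdot,\pi_\alpha(\dots))\in\sobh^2_\rho$ because $G$ is regularity preserving: the argument $\pi_\alpha((v+Z)_{(\argdot)})$ has bounded derivative, so $G(\argdot,\dots)\in\sobh^1_\rho$. The step that needs checking is that regularity preservation applies to the projected history. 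We handle this as in \cite{AignerWaurick2026} by noting that the composition with $\pi_\alpha$ stays in $\sobh^1$ in time with derivative bounded by $\alpha$. A cleaner alternative is to perform the contraction in $\sobh^2_\rho$ directly, using $\norm{S_\rho}_{\mathcal L(\sobh^2_\rho)}\le1/c$. Either way, $v=S_\rho g$ with $g\in\sobh^2_\rho(\mathbb R;H)$ supported in $[0,\infty)$, so $v\in\sobh^2_\rho$. By Sobolev embedding, $v\in\mathcal C^1$ and $v(0)=0$.

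The main obstacle is obtaining $v'(0)=0$ in the nonautonomous setting. Previously one used that $\partial_t$ commutes with $S_\rho$; now only $M\partial_t\subseteq\partial_tM-M'$ is available. We would differentiate the equation: $w\coloneq\partial_tv$ solves $[\partial_tM+N+A]w=\partial_tg-\partial_t N v+\dots+M'$-terms. Using the commutator relation, $w$ is the solution with right-hand side $g'-M'w$-type corrections plus $[\partial_t,N]$ terms. Here we must be careful, and if $N$ does not commute we absorb the commutator into an $\sobh^1$-valued term via assumption (i) of \cref{th:PicardHigh}. The compatibility conditions $f(0)=0$ and $G(0,\Phi)=-f'(0)$ give $g(0)=0$ and $g'(0)=f'(0)+G(0,\Phi)=0$, so $g'\in\sobh^1_\rho$ is supported in $[0,\infty)$ and continuous with $g'(0)=0$. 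Causality of $S_\rho$ on $\sobh^1_\rho$ then yields that $w\in\sobh^1_\rho$ is supported in $[0,\infty)$, hence $w(0)=0$. With $v'(0)=0$, continuity gives $\norm{v'(t)+Z'(t)}\le\norm{v'(t)}+\norm{Z'(t)}<\alpha$ for $t\in[0,T]$, using $\norm{Z'(0)}\le\norm{\Phi'}_\infty<\alpha$ and continuity of $Z'$. For $s\le0$ we have $(v+Z)'=\Phi'$, so $(v+Z)_{(t)}\in V_\alpha$ and $\pi_\alpha$ acts as the identity there. Thus $v$ is a local solution in $\sobh^2(0,T;H)$.

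Uniqueness follows from the observation that two local solutions with constants $\alpha_1,\alpha_2$ are both fixed points of $\Gamma_{\rho,\max\alpha_i}$ on a common interval. By causality we may compare restrictions to $[0,T]$, and the contraction gives equality there.
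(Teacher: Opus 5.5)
Your overall architecture matches the paper's. You contract in $\sobh^{1}_{\rho}$ via \cref{th:PicardHigh} with $k=1$, upgrade the fixed point to $\sobh^{2}_{\rho}$ with $k=2$ using $f(0)=0$ and $f'(0)=-G(0,\Phi)$, and then run the continuity argument.

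\textbf{The contraction estimate, as written, does not hold.} The factor $I_{\rho}$ gives no gain of $1/\rho$ from $\leb^{2}_{\rho}$ into $\sobh^{1}_{\rho}$. With the norm $\norm{\partial_{t,\rho}\argdot}_{\leb^{2}_{\rho}}$ one has $\norm{I_{\rho}u}_{\sobh^{1}_{\rho}}=\norm{u}_{\leb^{2}_{\rho}}$, so $I_{\rho}\colon\leb^{2}_{\rho}\to\sobh^{1}_{\rho}$ is isometric. The bound $\norm{I_{\rho}}=1/\rho$ only holds for $I_{\rho}$ acting within one fixed space. All you actually get is $\norm{\Gamma_{\rho,\alpha}v-\Gamma_{\rho,\alpha}w}_{\sobh^{1}_{\rho}}\le\tfrac{1}{c}\norm{\widetilde{G}_{\alpha}(v)-\widetilde{G}_{\alpha}(w)}_{\leb^{2}_{\rho}}$. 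The smallness therefore has to come from the history map. Your $\e^{\rho h}$ bound gets that factor backwards, and the appeal to ``$\chi$-cutoffs'' does not repair it.

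The missing ingredient is \cref{th:ThetaIsLipschitz}. By Fubini,
\[
  \int_{0}^{\infty}\!\int_{-h}^{0}\norm{u(t+s)}_{H}^{2}\dx[s]\,\e^{-2\rho t}\dx[t]
  \le \int_{-h}^{0}\e^{2\rho s}\dx[s]\;\norm{u}_{\leb^{2}_{\rho}(-h,\infty;H)}^{2}
  \le \tfrac{1}{2\rho}\norm{u}_{\leb^{2}_{\rho}(-h,\infty;H)}^{2},
\]
so $\norm{\Theta}\le(2\rho)^{-1/2}$. Apply this to $v-w$ and $(v-w)'$, both of which vanish on $(-h,0)$. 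Combined with the $1$-Lipschitz projection and $L_{\alpha}$, this gives a Lipschitz constant of order $L_{\alpha}/(c\sqrt{2\rho})$, up to harmless norm-equivalence factors. That is $<1$ for large $\rho$, which is exactly the paper's choice $\rho>L_{\alpha}^{2}/(2c^{2})$.

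\textbf{The step you call the main obstacle is not one, and your commutator argument should be dropped.} Once $v\in\sobh^{2}_{\rho}(\mathbb{R};H)$ with $\mathrm{spt}(v)\subseteq[0,\infty)$, the derivative $v'\in\sobh^{1}_{\rho}(\mathbb{R};H)$ is continuous and vanishes on $(-\infty,0)$, so $v'(0)=0$ immediately. You already have this from \cref{th:PicardHigh} with $k=2$ and causality. It is precisely why the $k=2$ hypothesis is imposed: the paper remarks that the commutator route does not yield $v'(0)=0$.

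Your differentiated equation would in addition need $M'$ and $[\partial_{t},N]$ to preserve support in $[0,\infty)$, i.e.\ to be causal. That is not assumed; hypothesis (i) of \cref{th:PicardHigh} only gives boundedness on $\sobh^{k}_{\rho}$.

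Likewise, your ``cleaner alternative'' of contracting directly in $\sobh^{2}_{\rho}$ is unavailable. Almost uniform Lipschitz continuity of $G$ gives no Lipschitz control of $\partial_{t}\widetilde{G}_{\alpha}$.

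The remaining parts are fine: the continuity argument near $t=0$, and uniqueness by comparing two local solutions on a common interval via causality and the contraction.
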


\begin{proof}
  Let $\alpha > \norm{\Phi'}_{\infty}$ and $\rho > \max\bigl\{\tfrac{L_{\alpha}^{2}}{2 c^{2}},\tilde{\rho}\bigr\}$, where $L_{\alpha}$ is the Lipschitz-constant of $G$ associated to the parameter $\alpha$, cf.~\cref{def:AlmLC}, and $c$ is the minimum of the coercivity bounds from \cref{th:PicardHigh} (for $k =1$ and $k = 2$) and \cref{th:Picard}.\\
  We aim to show the existence of a fixed point of
  \begin{multline*}
    v = S_{\rho}\bigl[f + I_{\rho}G\bigl(\argdot,\pi_{\alpha}((v+Z)_{(\argdot)})\bigr)\bigr] = S_{\rho}\bigl[f + I_{\rho}\widetilde{G}_{\alpha}(v)\bigr]\eqcolon \Gamma_{\rho,\alpha}(v)\text{,}\\
    \text{for}\ v\in \sobh^{1}_{\rho}(0,\infty;H)\ \text{satisfying}\ v(0)=0
  \end{multline*}
  by showing that $\Gamma_{\rho,\alpha}$ satisfies the requirements of the contraction mapping principle. This can be achieved as in the proof of \cite[thm.~4.1]{AignerWaurick2026} by replacing the regularity statements $\partial_{t}S_{\rho} \supseteq S_{\rho}\partial_{t}$ there with well-posedness in $\sobh^{1}_{\rho}(\mathbb{R};H)$ appealing to \cref{th:PicardHigh} here. The contraction mapping principle then provides a solution $w$ of
  \begin{equation*}
    w = \Gamma_{\rho,\alpha}(w)\text{,} \quad w\in \sobh^{1}_{\rho}(0,\infty;H)\;\text{with}\ w(0)=0\text{.}
  \end{equation*}
  To obtain a local solution of \cref{eq:FPPIntOut}, it needs to argued that at least up to some $T>0$, $\pi_{\alpha}\bigl((w + Z)_{(\argdot)}\bigr) = (w + Z)_{(\argdot)}$. This is done by a continuity argument:\\
  First recall that $f\in \sobh^{2}_{\rho}(0,\infty;H)$, and because $G$ is regularity preserving, $\widetilde{G}_{\alpha}(w) \in \sobh^{1}_{\rho}(0,\infty;H)$ and consequently $I_{\rho}\widetilde{G}_{\alpha}(w) \in \sobh^{2}_{\rho}(0,\infty;H)$. By assumption, the sum $f + I_{\rho}\widetilde{G}_{\alpha}(w) \in \sobh^{2}_{\rho}(\mathbb{R};H)$. Well-posedness in $\sobh^{2}_{\rho}(\mathbb{R};H)$ appealing to \cref{th:PicardHigh} for $k =2$ now assures that $w \in \sobh^{2}_{\rho}(\mathbb{R};H)$. Causality of $S_{\rho}$ implies that $\mathrm{spt}(w) \subseteq [0,\infty)$. Because $w \in \sobh^{2}_{\rho}(\mathbb{R};H)$, in particular $w'(t) = 0$ for $t<0$ and hence $w' \in \sobh^{1}_{\rho}(\mathbb{R};H)$.\\
  Since $w = 0$ on $(-\infty,0]$ and $\Phi \in V_{\alpha}$, we only need to show that $w' + Z'$ is bounded in $\norm{\argdot}_{\infty}$-norm by $\alpha$ up to some positive time $T>0$. Since $Z'$ is bounded on any interval $[0,c]$, $c>0$, and $\norm{Z'(0)}_{H}< \alpha$ by assumption \ref{ass:B} ii), $\norm{Z'}_{\mathcal{C}(0,\tilde{T};H)}<\alpha$ holds up to some positive $\tilde{T}>0$. Since $w'$ is continuous and $w'(0)=0$, up to some positive $0<T\leq \tilde{T}$, $w'+Z'$ remains bounded in $\norm{\argdot}_{\infty}$-norm by $\alpha$. This establishes existence of a local solution.\\
  Uniqueness of solutions is shown verbatim as in the proof of \cite[thm.~4.1]{AignerWaurick2026}.
\end{proof}

\subsubsection*{The case $\widetilde{F}(v)=\widetilde{G}(I_{\rho}v)$}\phantom{.}
\begin{theorem}[local existence and uniqueness for \cref{eq:FPPIntIns}]
  \label{th:localExistIns}
  Let $G$ satisfy assumption \ref{ass:A} and let $\Phi$ and $Z$ satisfy  assumption \ref{ass:B}. Let $M$, $M'$, $N$ and $A$ satisfy the assumptions of \cref{th:Picard} and \cref{th:PicardHigh} for $k=1$. Let $f\in \sobh^{1}_{\tilde{\rho}}(0,\infty;H)$ for some $\tilde{\rho}>0$ satisfy $f(0) = -G\bigl(0, \Phi\bigr)$. Then \cref{eq:FPPIntIns} has a unique local solution $u \in \sobh^{1}(0,T;H)$.
\end{theorem}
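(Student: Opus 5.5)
We mirror the proof of \cref{th:localExistOut}, i.e.\ follow \cite[thm.~4.2]{AignerWaurick2026} and replace every use of the commutation $\partial_{t}S_{\rho}\supseteq S_{\rho}\partial_{t}$ by well-posedness in $\sobh^{1}_{\rho}(\mathbb{R};H)$ from \cref{th:PicardHigh} with $k=1$. Fix $\alpha>\norm{\Phi'}_{\infty}$ and let $L_{\alpha}$ be the Lipschitz constant of $G$ from \cref{def:AlmLC}. Consider the projected map
\begin{equation*}
  \Gamma_{\rho,\alpha}(v)\coloneq S_{\rho}\bigl[f + G\bigl(\argdot,\pi_{\alpha}((I_{\rho}v+Z)_{(\argdot)})\bigr)\bigr]
\end{equation*}
on $\leb^2_{\rho}(0,\infty;H)$. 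Here $I_{\rho}v$ vanishes on $(-\infty,0]$, so $(I_{\rho}v+Z)_{(0)}=\Phi$.

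\textbf{Contraction.} First, $\Gamma_{\rho,\alpha}$ is well defined: by assumption \ref{ass:A} iii) and the Lipschitz bound on $V_{\alpha}$, the argument of $S_{\rho}$ lies in $\leb^2_{\rho}$, and its support is in $[0,\infty)$ by causality. For the contraction estimate take $v,w\in\leb^2_{\rho}(0,\infty;H)$. Since $\pi_{\alpha}$ is the metric projection in $\sobh^{1}(-h,0;H)$, it is $1$-Lipschitz. Hence
\begin{equation*}
  \norm{G(t,\pi_{\alpha}(\cdot)) - G(t,\pi_{\alpha}(\cdot))}_{H}\leq L_{\alpha}\norm{(I_{\rho}(v-w))_{(t)}}_{\sobh^{1}(-h,0;H)}.
\end{equation*}
Squaring, weighting with $\e^{-2\rho t}$ and integrating, the history norm contributes a factor of order $h$, times $\e^{2\rho h}$ at worst. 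The standard estimate of \cite{AignerWaurick2026} controls $\int\norm{(u)_{(t)}}^{2}_{\sobh^{1}}\e^{-2\rho t}\dx[t]$ by $\frac{1}{2\rho}$-type constants times $\norm{u}^{2}_{\sobh^{1}_{\rho}}$, using that $u$ is supported in $[0,\infty)$. With $u=I_{\rho}(v-w)$ and $\norm{I_{\rho}}\leq\nicefrac{1}{\rho}$, this yields
\begin{equation*}
  \norm{\Gamma_{\rho,\alpha}v-\Gamma_{\rho,\alpha}w}_{2,\rho}\leq \tfrac{L_{\alpha}}{c\sqrt{2\rho}}\,C\,\norm{v-w}_{2,\rho}.
\end{equation*}
Here $c$ is the smaller of the coercivity constants from \cref{th:Picard} and \cref{th:PicardHigh} with $k=1$. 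Choosing $\rho>\max\{\tilde\rho, C^{2}L_{\alpha}^{2}/(2c^{2})\}$ makes $\Gamma_{\rho,\alpha}$ a contraction, and we obtain a fixed point $w$.

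\textbf{Regularity.} Since $I_{\rho}w\in\sobh^{1}_{\rho}$ with $\norm{(I_{\rho}w)'}_{\infty}$ not a priori finite, we do not apply regularity preservation to $I_{\rho}w$ itself. Instead we apply it to the projected histories, which lie in $V_{\alpha}$; this is exactly how \cite{AignerWaurick2026} handles the projected term. It gives $\widetilde{G}_{\alpha}(I_{\rho}w)\in\sobh^{1}_{\rho}(0,\infty;H)$. Next comes the compatibility condition. The function $\widetilde{G}_{\alpha}(I_{\rho}w)$ is continuous with value $G(0,\Phi)$ at $0$, and $f(0)=-G(0,\Phi)$. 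Thus the extension of $f+\widetilde{G}_{\alpha}(I_{\rho}w)$ by zero to $\mathbb{R}$ has no jump at $0$ and belongs to $\sobh^{1}_{\rho}(\mathbb{R};H)$. \Cref{th:PicardHigh} with $k=1$ then gives $w\in\sobh^{1}_{\rho}(\mathbb{R};H)$, causality gives $\mathrm{spt}(w)\subseteq[0,\infty)$, and therefore $w(0)=0$ with $w$ continuous.

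\textbf{Removing the projection.} We need $(I_{\rho}w+Z)_{(t)}\in V_{\alpha}$ for small $t$, i.e.\ $\norm{w+Z'}_{\infty}\leq\alpha$ on $[0,T]$, since $(I_{\rho}w)'=w$. On $[-h,0]$ this function equals $\Phi'$, which is bounded by $\norm{\Phi'}_{\infty}<\alpha$. On $[0,T]$ we use that $w$ is continuous with $w(0)=0$, that $Z'$ is continuous on $[0,\infty)$ by assumption \ref{ass:B}, and that $\norm{Z'(0)}_{H}\leq\norm{\Phi'}_{\infty}<\alpha$. Together these give some $T>0$ on which the bound holds, so $w$ is a local solution with $w\in\sobh^{1}(0,T;H)$. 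Uniqueness follows as in \cite[thm.~4.2]{AignerWaurick2026}: any two local solutions are fixed points of $\Gamma_{\rho,\alpha}$ on a common interval (after enlarging $\alpha$), and causality together with the contraction yields equality.

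\textbf{Main obstacle.} The delicate step is the regularity step. We must justify $\widetilde{G}_{\alpha}(I_{\rho}w)\in\sobh^{1}_{\rho}$ through the projection, and check that the compatibility condition $f(0)=-G(0,\Phi)$ exactly prevents a jump at $0$. That jump is what would otherwise keep the right-hand side out of $\sobh^{1}_{\rho}(\mathbb{R};H)$ and block the use of \cref{th:PicardHigh}.
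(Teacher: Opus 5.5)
Your proposal is correct and follows the paper's proof step for step. Both use the same projected map $\Gamma_{\rho,\alpha}$ on $\leb^{2}_{\rho}(0,\infty;H)$, made a contraction for large $\rho$. Both use the compatibility condition $f(0)=-G(0,\Phi)$ to get a right-hand side in $\sobh^{1}_{\rho}(\mathbb{R};H)$. Both apply \cref{th:PicardHigh} with $k=1$ plus causality to obtain $w\in\sobh^{1}_{\rho}$ with $w(0)=0$, and use continuity of $w+Z'$ to remove $\pi_{\alpha}$ on $[0,T]$.

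The one step that genuinely differs is the Lipschitz estimate.
\begin{itemize}
\item The paper writes $(I_{\rho}v)_{(t)}$ as $\int_{-h}^{\argdot}v_{(t)}$ and splits the $\sobh^{1}(-h,0;H)$-norm. It bounds the antiderivative window by window with the Poincar\'e-type inequality $(\square)$ (constant $h^{2}$), then applies \cref{th:ThetaIsLipschitz} in $\leb^{2}$. This gives the factor $\tfrac{L_{\alpha}^{2}}{c^{2}}\tfrac{1+h^{2}}{2\rho}$.
\item You apply \cref{th:ThetaIsLipschitz} separately to $u=I_{\rho}(v-w)$ and to $u'=v-w$, and use $\norm{I_{\rho}}\leq\nicefrac{1}{\rho}$. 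This gives $\tfrac{L_{\alpha}^{2}}{c^{2}}\tfrac{1+\rho^{-2}}{2\rho}$.
\end{itemize}
Your route is the cleaner one because it controls the full antiderivative. As written, the paper's identity $(I_{\rho}v)_{(t)}=\int_{-h}^{\argdot}v_{(t)}$ drops the constant $(I_{\rho}v)(t-h)=\int_{0}^{t-h}v$, which is nonzero for $t>h$. That is harmless, but the omitted term can only be controlled through $\norm{I_{\rho}}\leq\nicefrac{1}{\rho}$, i.e.\ by your argument, so the window-wise estimate $(\square)$ is not actually needed.

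Two small remarks.

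First, delete the aside that the history norm contributes ``a factor of order $h$, times $\e^{2\rho h}$ at worst''. Since $\e^{-2\rho t}=\e^{-2\rho(t+s)}\e^{2\rho s}$ with $s\leq 0$, the weight never produces a growing factor. An $\e^{2\rho h}$ factor would destroy the contraction for large $\rho$. Even the crude bound of order $h$, which has no decay in $\rho$, would not give a contraction. What the argument needs, and what \cref{th:ThetaIsLipschitz} provides, is the factor $\nicefrac{1}{2\rho}$ with $C$ bounded in $\rho$; here $C^{2}=1+\rho^{-2}\leq 2$ for $\rho\geq 1$.

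Second, the step you flag as the main obstacle is not literally covered by \cref{def:RegPres}. That step applies regularity preservation to $t\mapsto G\bigl(t,\pi_{\alpha}((I_{\rho}w+Z)_{(t)})\bigr)$, but projected histories are not the histories of a single function with bounded derivative. The paper's proof makes exactly the same appeal without further justification. So on this point your proposal is at the paper's level of rigor, not behind it.
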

Here, because of the similarity to the prvious case, a sketch of the proof is omitted entirely and the details deferred to \cref{app:WellPosedness}. Instead we mention, that in both cases $\widetilde{F}(v) = I_{\rho}\widetilde{G}(v)$ and $\widetilde{F}(v) = \widetilde{G}(I_{\rho}v)$, we can show the existence of a maximal existence interval and a blowup criterion for the maximal time $T_{\max}$. The reason for that is that the proofs of \cref{th:localExistIns} and \cref{th:localExistOut} rely on the contraction mapping principle and are essentially generalizations of the ODE proof for \cite[thm.~4.1]{Waurick2023}, cf.~\cref{rmk:maxExistIntOut} and \cref{rmk:maxExistIntIns}.\\
For a first application, we can generalize the two examples \cite[ex.~6.1\&6.3]{AignerWaurick2026} to the nonautonomous case, as treated in \cite[ch.~16]{STW2022} without delay.
\begin{example}[Heat equation]
Let $\Omega \subseteq \mathbb{R}^{d}$ be open and let the heat conductivity $a\colon \mathbb{R} \!\times \!\Omega \to \mathbb{R}^{d\times d}$ satisfy:
\begin{enumerate}[label = (\roman*), leftmargin = 5ex]
  \item $a$ is bounded and measurable.
  \item There exists $c>0$ such that
        \begin{equation*}
          \Re a(t,x) \geq c \quad \text{for a.a.}\ (t,x)\in \mathbb{R} \!\times \!\Omega.
        \end{equation*}
  \item $a$ is Lipschitz continuous w.r.t.\ the temporal variables uniformly in space; i.e.\ there exists $L\geq 0$ such that
        \begin{equation*}
          \norm[\big]{a(s,x) - a(t,x)}_{\mathbb{R}^{d\times d}} \leq L \abs{t-s} \quad \forall s,t \!\in \!\mathbb{R}\;\forall x \!\in \!\Omega.
        \end{equation*}
  \item The time derivative $a_{t}\colon \mathbb{R} \!\times \!\Omega \to \mathbb{R}^{d\times d}$ is bounded and measurable.
\end{enumerate}
Then the nonautonomous system describing heat conduction is given as
\begin{equation*}
  \left\{
  \begin{aligned}
    \partial_{t,\rho}\theta(t,x) + \mathring{\operatorname{div}} q(t,x) &= Q(t,x) &\quad\text{for a.a.}\ (t,x) \in \mathbb{R} \!\times \!\Omega,\\
    q(t,x) &= -a(t,x) \operatorname{grad} \theta(t,x) &\quad\text{for a.a.}\ (t,x) \in \mathbb{R} \!\times \!\Omega,
  \end{aligned}
  \right.
\end{equation*}
where $\theta$ denotes the heat distribution, $Q$ a given heat source and $q$ is the heat flux.
\begin{itemize}[leftmargin = 4ex]
  \item $\operatorname{grad}\colon \leb^2(\Omega)\supseteq \sobh^{1}(\Omega) \to \leb^2(\Omega)^{d}$ denotes the weak gradient  and
  \item $\mathring{\operatorname{div}}\coloneq \operatorname{grad}^{\ast}$ the weak divergence with Dirichlet boundary conditions.
\end{itemize}
We show that the coercivity estimate required for \cref{th:PicardHigh} for $k =1$ is satisfied. Using $(a^{-1})_{t} = - a^{-1}a_{t}a^{-1}$ (obtained from differentiating $I = aa^{-1}$ using the product rule) we estimate
\begin{align*}
  \Re\dualprod[\big]{a^{-1}\varphi}{\varphi}_{\sobh^{1}_{\rho}}
  &= \Re\dualprod[\big]{\tfrac{\dd}{\dx[t]}(a^{-1}\varphi)}{\varphi'}_{\leb^2_{\rho}}\\
  &= \Re\dualprod[\big]{(a^{-1})_{t}\varphi + a^{-1}\varphi'}{\varphi'}_{\leb^2_{\rho}}\\
  &= \Re\dualprod[\big]{- a^{-1}a_{t}a^{-1} \varphi + a^{-1}\varphi'}{\varphi'}_{\leb^2_{\rho}}\\
  &\geq - \norm{a^{-1}}_{\mathcal{L}(\leb^2(\Omega)^{d\times d})}^{2}\norm{a_{t}}_{\mathcal{L}(\leb^2(\Omega)^{d\times d})}\norm{\varphi}_{\leb^2_{\rho}}\norm{\varphi'}_{\leb^2_{\rho}} + \tfrac{c}{\norm{a}^{2}}\norm{\varphi'}_{\leb^2_{\rho}}^{2}\\
  &\geq \Big(\tfrac{c}{\norm{a}^{2}} - \tfrac{\norm{a_{t}}}{\rho c^{2}}\Big)\norm{\varphi}_{\sobh^{1}_{\rho}}^{2},
\end{align*}
where we used that $\varphi = I_{\rho}\varphi'$ with $\norm{I_{\rho}} = \tfrac{1}{\rho}$ and $\Re a^{-1} \geq \tfrac{c}{\norm{a}^{2}}$ by \cite[prop.~6.2.3]{STW2022}. This estimate can always be satisfied for large enough $\rho$. The remaining assumptions for \cref{th:Picard} and \cref{th:PicardHigh} can be checked easily, cf.~\cite[ch.~16.1]{STW2022}. Transitioning to an initial value problem with a continuation $Z$ of a prehistory $\Phi\in \sobw^{1,\infty}(-h,0;H)$ satisfying assumption \ref{ass:B} and a function $G$ satisfying asssumption \ref{ass:A}, well-posedness of the problem
\begin{equation*}
  \left[\partial_{t,\rho}\begin{pmatrix} 1 & 0 \\ 0 & 0 \end{pmatrix}
  + \begin{pmatrix} 0 & 0 \\ 0 & a^{-1} \end{pmatrix}
  + \begin{pmatrix} 0 & \mathring{\operatorname{div}} \\ \operatorname{grad} & 0 \end{pmatrix}\right]
  \begin{pmatrix} \theta \\ q \end{pmatrix}
  = \begin{pmatrix} f + G(\argdot, (I_{\rho}\theta + Z)_{(\argdot)}) \\ 0 \end{pmatrix}
\end{equation*}
follows from \cref{th:localExistIns} provided that $f \in \sobh^{1}_{\rho}\big(\mathbb{R};\leb^{2}(\Omega)\big)$ and $f(0) = -G(0,\Phi)$.
\end{example}

\begin{example}[Maxwell's equations]
Let $\Omega \subseteq \mathbb{R}^{3}$ be open and let $\epsilon,\mu,\sigma \colon \mathbb{R} \!\times \!\Omega \to \mathbb{R}^{3\times 3}$ denoting dielectricity, magnetic permeability and electric conductivity respectively, satisfy
\begin{enumerate}[label = (\roman*), leftmargin = 5ex]
  \item $\epsilon,\mu,\sigma$ are bounded and measurable.
  \item $\epsilon_{t}$, $\epsilon_{tt}$, as well as $\mu_{t}$, $\mu_{tt}$ and $\sigma_{t}$ exist pointwise almost everywhere and are bounded and measurable functions.
  \item Assume $\epsilon(t,x)^{\intercal} = \epsilon (t,x)$ and $\mu(t,x)^{\intercal} = \mu(t,x)$ for a.a.\ $(t,x)\in \mathbb{R} \!\times \!\Omega$.
  \item There exist $c, \rho_{1}>0$ such that for all $\rho \geq \rho_{1}$
        \begin{equation*}
          \qquad\mu(t,x)\geq c \quad \text{and} \quad \rho \epsilon(t,x) + \epsilon_{t}(t,x) + \Re \sigma(t,x) \geq c \quad \text{for a.a.}\ (t,x)\in \mathbb{R} \!\times \!\Omega.
        \end{equation*}
  \item\label{it:Condition} There exist $d, \rho_{2}>0$ such that for all $\rho \geq \rho_{2}$ and a.a.\ $(t,x)\in \mathbb{R} \!\times \!\Omega$
        \begin{equation*}
          \qquad \rho\mu(t,x) + \mu_{t}(t,x)\geq d \quad \text{and} \quad \rho \epsilon(t,x) + 2\epsilon_{t}(t,x) + \Re \sigma(t,x) \geq d.
        \end{equation*}
\end{enumerate}
Then the nonautonomous Maxwell equations are given as
\begin{equation*}
  \left[\partial_{t}\begin{pmatrix} \epsilon & 0 \\ 0 & \mu \end{pmatrix}
  + \begin{pmatrix} \sigma & 0 \\ 0 & 0 \end{pmatrix}
  + \begin{pmatrix} 0 & -\operatorname{curl} \\ \mathring{\operatorname{curl}} & 0 \end{pmatrix}\right]
  \begin{pmatrix} E \\ H \end{pmatrix}
  = \begin{pmatrix} j_{0} \\ 0 \end{pmatrix}
\end{equation*}
for some external current $j_{0}\colon \mathbb{R} \!\times\!\Omega \to \mathbb{R}^{3}$. Note that the two divergence conditions usually complementing Maxwell's equations can be realized via suitable initial conditions, cf.~\cite[rmk.~6.2.9]{STW2022}. Well-posedness of the problem in $\leb^2_{\rho}\big(\mathbb{R};\leb^2(\Omega)^{3} \!\times\! \leb^2(\Omega)^{3}\big)$ is assured by \cref{th:Picard} for $\rho \geq \rho_{1}$ by \cref{th:Picard} without condition \ref{it:Condition}, cf.~\cite[sec.~16.1]{STW2022}.\\
For well-posedness in $\sobh^{1}_{\rho}$ and state-dependent delay in the right-hand side we require condition \ref{it:Condition}. A similar calculation as for the heat equation shows
\begin{align*}
  \Re \dualprod[\big]{(\partial_{t}\epsilon + \sigma)\varphi}{\varphi}_{\sobh^{1}_{\rho}}
  &= \Re \dualprod[\big]{\tfrac{\dd}{\dx[t]}(\epsilon_{t} + \sigma)\varphi}{\varphi'}_{\leb^2_{\rho}}
  \geq \Big(d - \tfrac{\norm{\epsilon_{tt}}+\norm{\sigma'}}{\rho}\Big)\norm{\varphi}_{\sobh^{1}_{\rho}}^{2},\\
  \Re \dualprod{\mu \varphi}{\varphi}_{\sobh^{1}_{\rho}} &\geq d\norm{\varphi}_{\sobh^{1}_{\rho}}^{2},
\end{align*}
which assures the coercivity condition for \cref{th:PicardHigh} for $\rho > \min \{\rho_{1},\rho_{2}\}$; hence, well-posedness in $\sobh^{1}_{\rho}$ is assured.
Again, after a transition to an initial value problem for continuations $Z_{1}$ and $Z_{2}$ of initial prehistories $\Phi, \Psi\in \sobw^{1,\infty}\big(-h,0;\leb^{2}(\Omega)^{3}\big)$ satisfying assumption \ref{ass:B} and functions $G_{1}$, $G_{2}$ satisfying assumption \ref{ass:A}, \cref{th:localExistIns} assures local well-posedness of the problem
\begin{multline*}
  \left[\partial_{t}\begin{pmatrix} \epsilon & 0 \\ 0 & \mu \end{pmatrix}
  + \begin{pmatrix} \sigma & 0 \\ 0 & 0 \end{pmatrix}
  + \begin{pmatrix} 0 & -\operatorname{curl} \\ \mathring{\operatorname{curl}} & 0 \end{pmatrix}\right]
  \begin{pmatrix} E \\ H \end{pmatrix}\\
  = \begin{pmatrix} f_{1} + G_{1}\big(\argdot, (I_{\rho}E + Z_{1})_{(\argdot)}, (I_{\rho}H + Z_{2})_{(\argdot)}\big) \\
    f_{2} + G_{2}\big(\argdot, (I_{\rho}E + Z_{1})_{(\argdot)}, (I_{\rho}H + Z_{2})_{(\argdot)}\big) \end{pmatrix}
\end{multline*}
provided that
\begin{itemize}[leftmargin = 4ex]
  \item $f_{1},f_{2} \in \sobh^{1}_{\rho}\big(-h,0;\leb^{2}(\Omega)^{3}\big)$,
  \item $f_{1}(0) = -G_{1}(0,\Phi,\Psi)$ and
  \item $f_{2}(0) = -G_{2}(0,\Phi,\Psi)$.
\end{itemize}
\end{example}

The last examples generalize two examples from \cite{AignerWaurick2026} to include nonautonomous coefficients in the evolutionary equation. More importantly however, the nonautonomous solution theory permits the direct inclusion of nonautonomous delay in the material law. To that end, we first ought to take a closer look at shift operators.

\subsection{Nonautonomous delay}
\label{subsec:Shift}
To study nonautonomous delay of the form $u\big(t - \tau(t)\big)$ with a delay functional $\tau \colon \mathbb{R}\to [0,h]$, the following assumptions are made:
\begin{assumptions}
  \label{ass:Tau}
  Let $\tau\colon \mathbb{R}\to [0,h]$ satisfy $\tau \in \sobw^{1,\infty}(\mathbb{R})$ and $\tau' \leq a < 1$.
\end{assumptions}
These are typical assumptions made on delay functionals in several examples, as we will see later. In particular, they imply that $t\mapsto t - \tau(t)$ is injective. The \cref{ass:Tau} allow us to accommodate a nonautonomous shift operator.
\begin{definition}[Nonautonomous shift]
  \label{def:Shift}
  Let $\tau\colon \mathbb{R}\to \mathbb{R}$, $\rho \in \mathbb{R}$ and $H$ be a Hilbert space. Let
  \begin{equation*}
    S_{\tau}\colon \leb^{2}_{\rho}(\mathbb{R};H) \to \leb^{2}_{\rho}(\mathbb{R};H),\quad
    \varphi \mapsto \big(t\mapsto \varphi(t - \tau(t))\big).
  \end{equation*}
\end{definition}
In principle, such a shift operator can be defined on any function space, but for the task at hand the setting of exponentially weighted $\leb^{2}$-spaces suffices.

\begin{lemma}
  \label{th:Shift}
  Let $\tau$ satisfy \cref{ass:Tau}. Then $S_{\tau} \in \mathcal{L}\big(\leb^{2}_{\rho}(\mathbb{R};H)\big)$ with
  \begin{equation*}
    \norm{S_{\tau}} \leq \frac{\e^{-\rho \inf (\tau)}}{\sqrt{1-a}}.
  \end{equation*}
\end{lemma}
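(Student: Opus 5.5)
We prove the bound by a direct change of variables in the weighted $\leb^2$-integral. Set $\sigma(t)\coloneq t-\tau(t)$. By \cref{ass:Tau}, $\tau\in\sobw^{1,\infty}(\mathbb{R})$, so $\sigma$ is Lipschitz and hence absolutely continuous. Moreover $\sigma'=1-\tau'\geq 1-a>0$ almost everywhere. Consequently $\sigma$ is strictly increasing. Since $\abs{\sigma(t)-t}\leq h$, it is also surjective onto $\mathbb{R}$, so $\sigma$ is a bi-Lipschitz homeomorphism of $\mathbb{R}$ whose inverse has Lipschitz constant at most $\frac{1}{1-a}$.

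First we check that $S_\tau$ is well defined on equivalence classes. Because $\sigma^{-1}$ is Lipschitz, it maps null sets to null sets. Hence if $\varphi=\psi$ almost everywhere, then $\varphi\circ\sigma=\psi\circ\sigma$ almost everywhere. Measurability of $\varphi\circ\sigma$ is clear, since $\sigma$ is a homeomorphism. Linearity of $S_\tau$ is obvious.

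For the estimate, take $\varphi\in\leb^2_\rho(\mathbb{R};H)$ and write
\begin{equation*}
  \norm{S_\tau\varphi}_{2,\rho}^2=\int_{\mathbb{R}}\norm{\varphi(\sigma(t))}_H^2\e^{-2\rho t}\dx[t]
  =\int_{\mathbb{R}}\norm{\varphi(\sigma(t))}_H^2\e^{-2\rho\sigma(t)}\e^{-2\rho\tau(t)}\frac{\sigma'(t)}{\sigma'(t)}\dx[t].
\end{equation*}
Here we used $t=\sigma(t)+\tau(t)$. We bound $\frac{1}{\sigma'(t)}\leq\frac{1}{1-a}$ and $\e^{-2\rho\tau(t)}\leq\e^{-2\rho\inf(\tau)}$; the second bound uses $\rho\geq 0$, which is the relevant regime. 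This gives
\begin{equation*}
  \norm{S_\tau\varphi}_{2,\rho}^2\leq\frac{\e^{-2\rho\inf(\tau)}}{1-a}\int_{\mathbb{R}}\norm{\varphi(\sigma(t))}_H^2\e^{-2\rho\sigma(t)}\sigma'(t)\dx[t]
  =\frac{\e^{-2\rho\inf(\tau)}}{1-a}\norm{\varphi}_{2,\rho}^2.
\end{equation*}
The last equality is the substitution $s=\sigma(t)$. This substitution is justified by the change of variables formula for monotone absolutely continuous maps applied to the nonnegative measurable integrand $s\mapsto\norm{\varphi(s)}^2\e^{-2\rho s}$. Taking square roots yields the stated bound.

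The only delicate point is justifying the substitution rule for a merely Lipschitz, rather than $\mathcal{C}^1$, map $\sigma$. One route is to prove it first for indicator functions of intervals, where it is just the fundamental theorem of calculus for the absolutely continuous $\sigma$. It then extends to nonnegative measurable functions by monotone class and monotone convergence arguments. Alternatively, one can cite the standard area formula.

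If $\rho<0$ were also intended, the bound $\e^{-2\rho\tau}\leq\e^{-2\rho\inf\tau}$ fails; one would then need $\sup\tau$ in place of $\inf\tau$. We therefore restrict to $\rho\geq 0$, consistent with the setting of \cref{th:Picard}.
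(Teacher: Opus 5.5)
Your proof is correct and follows the paper's argument. Both split $\e^{-2\rho t}=\e^{-2\rho\tau(t)}\e^{-2\rho(t-\tau(t))}$, bound the first factor by $\e^{-2\rho\inf(\tau)}$, and substitute $s=t-\tau(t)$ using $1-\tau'\geq 1-a$. Your additions only make the same argument more rigorous: the bi-Lipschitz property, well-definedness on equivalence classes, the change of variables for absolutely continuous maps, and the remark that the first bound needs $\rho\geq 0$, which the paper leaves implicit.
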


\begin{proof}
  Let $\varphi \in \leb^{2}_{\rho}(\mathbb{R};H)$. We estimate
  \begin{align*}
    \norm{S_{\tau}\varphi}_{\leb^{2}(\mathbb{R};H)}^{2}
    &= \medint\int_{-\infty}^{\infty} \norm[\big]{\varphi\big(t-\tau(t)\big)}_{H}^{2}\e^{-2\rho t}\dx[t]\\
    &= \medint\int_{-\infty}^{\infty} \e^{-2\rho \tau(t)}\norm[\big]{\varphi\big(t-\tau(t)\big)}_{H}^{2}\e^{-2\rho (t-\tau(t))}\dx[t]\\
    &\leq \e^{-2\rho \inf(\tau)} \medint\int_{-\infty}^{\infty}\norm{\varphi(s)}_{H}^{2}\e^{-2\rho s} \tfrac{1}{1-\tau'\big((1-\tau)^{-1}(s)\big)}\dx[s].
  \end{align*}
  In the last line the substitution $s = t - \tau(t)$ was used, which gives
  \begin{equation*}
    \dx[s] = \big[1 - \tau'(t)\big]\dx[t] = \big[1 - \tau'\big((1-\tau)^{-1}(s)\big)\big]\dx[t].
  \end{equation*}
  Since $\tau' \leq a < 1$, we conclude
  \begin{equation*}
    \norm{S_{\tau}\varphi}_{\leb^{2}_{\rho}(\mathbb{R};H)} \leq \tfrac{\e^{-\rho \inf(\tau)}}{\sqrt{1-a}} \norm{\varphi}_{\leb^{2}_{\rho}(\mathbb{R};H)}.\qedhere
  \end{equation*}
\end{proof}

To verify coercivity estimates, we will make use of the following observations:

\begin{lemma}
  \label{th:Derivative}
  Let $\rho >0$ and $\varphi \in \sobh^{1}_{\rho}(\mathbb{R};H)$. Then,
  \begin{equation*}
    \Re \dualprod{\varphi'}{\varphi}_{\leb^{2}_{\rho}(\mathbb{R};H)} = \rho \norm{\varphi}_{\leb^{2}_{\rho}(\mathbb{R};H)}^{2}.
  \end{equation*}
\end{lemma}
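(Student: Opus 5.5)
The plan is to prove the identity first for smooth compactly supported $\varphi \in \mathring{\mathcal{C}}^{\infty}(\mathbb{R};H)$ by integration by parts, and then extend it to all of $\sobh^{1}_{\rho}(\mathbb{R};H)$ by density.

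For such smooth $\varphi$, the function $t\mapsto \norm{\varphi(t)}_{H}^{2}$ is $\mathcal{C}^{1}$ with derivative $2\Re\scprod{\varphi'(t)}{\varphi(t)}_{H}$. Hence
\begin{equation*}
  2\Re \dualprod{\varphi'}{\varphi}_{\leb^{2}_{\rho}(\mathbb{R};H)} = \medint\int_{\mathbb{R}} \tfrac{\dd}{\dx[t]}\norm{\varphi(t)}_{H}^{2}\,\e^{-2\rho t}\dx[t].
\end{equation*}
Integrating by parts, the boundary terms vanish because of the compact support. The derivative of the weight is $-2\rho\e^{-2\rho t}$, so the right-hand side equals
\begin{equation*}
  2\rho\medint\int_{\mathbb{R}}\norm{\varphi(t)}_{H}^{2}\e^{-2\rho t}\dx[t] = 2\rho\norm{\varphi}_{\leb^{2}_{\rho}}^{2}.
\end{equation*}
Dividing by $2$ gives the claim for smooth compactly supported $\varphi$.

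For general $\varphi\in\sobh^{1}_{\rho}(\mathbb{R};H)$, I will use that $\mathring{\mathcal{C}}^{\infty}(\mathbb{R};H)$ is dense in $\sobh^{1}_{\rho}(\mathbb{R};H)=\operatorname{dom}(\partial_{t,\rho})$ with respect to the graph norm. This is the standard core property of $\partial_{t,\rho}$, cf.~\cite[ch.~3]{STW2022}; it can also be obtained by a cut-off and mollification argument, or by conjugating with the unitary map $\varphi\mapsto \e^{-\rho\argdot}\varphi$ onto the unweighted $\sobh^{1}$. Both sides of the identity are continuous with respect to the graph norm: the left side is a continuous sesquilinear expression in $(\varphi',\varphi)$, and the right side is continuous in $\varphi$. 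Therefore the identity passes to the limit.

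The only step needing care is this density/core statement. Everything else is the one-line integration by parts, so I do not expect any real obstacle. As a sanity check on the sign, note that the identity is consistent with $\norm{I_{\rho}}=\nicefrac{1}{\rho}$, which is used earlier in the paper.
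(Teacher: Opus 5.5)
Your proposal is correct and follows the same route as the paper: write $2\Re\scprod{\varphi'(t)}{\varphi(t)}_{H}$ as $\tfrac{\dd}{\dx[t]}\norm{\varphi(t)}_{H}^{2}$ and integrate by parts against the weight $\e^{-2\rho t}$. The paper does this computation directly on $\sobh^{1}_{\rho}(\mathbb{R};H)$ and says nothing about boundary terms. Your reduction to $\mathring{\mathcal{C}}^{\infty}(\mathbb{R};H)$, followed by the core/density argument in the graph norm, is a more careful justification of that same step.
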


\begin{proof}
  A direct calculation shows
  \begin{align*}
    2\Re \dualprod{\varphi'}{\varphi}_{\leb^{2}_{\rho}(\mathbb{R};H)}
    &= 2\Re \medint\int_{-\infty}^{\infty} \dualprod{\varphi'(t)}{\varphi(t)}_{H}\e^{-2\rho t}\dx[t]\\
    &= \medint\int_{-\infty}^{\infty} \big(\tfrac{\dd}{\dx[t]}\norm{\varphi(t)}_{H}^{2}\big)\e^{-2\rho t}\dx[t]\\
    &= 2\rho\medint\int_{-\infty}^{\infty} \norm{\varphi(t)}_{H}^{2}\e^{-2\rho t}\dx[t]
    = 2\rho \norm{\varphi}_{\leb^{2}_{\rho}(\mathbb{R};H)}^{2}.\qedhere
  \end{align*}
\end{proof}

This estimate can be generalized in the following way:
\begin{lemma}
  \label{th:DerivativeT}
  Let $\Omega \subseteq \mathbb{R}^{d}$ be bounded and open and let $T\colon \mathbb{R} \!\times \!\Omega \to \mathbb{R}^{d\times d}$
  \begin{itemize}[leftmargin = 4ex]
    \item be a measurable, bounded function of symmetric matrices,
    \item satisfy $T \geq c >0$ and
    \item be Lipschitz-continuous w.r.t.\ the temporal variables uniformly in space.
  \end{itemize}
  Then for $\rho >0$ and $\varphi \in \sobh^{1}_{\rho}\big(\mathbb{R};\leb^{2}(\Omega)^{d}\big)$,
  \begin{equation*}
    \Re \dualprod{T\varphi}{\varphi'}_{\leb^{2}_{\rho}(\mathbb{R};H)} \geq \big(\rho c - \tfrac{\norm{T_{t}}}{2}\big)\norm{\varphi}_{\leb^{2}_{\rho}(\mathbb{R};\leb^{2}(\Omega)^{d})}^{2}.
  \end{equation*}
\end{lemma}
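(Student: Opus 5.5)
The plan is to mimic the proof of \cref{th:Derivative}, now with the time-dependent symmetric weight $T$ inside the inner product. Write $\psi(t)\coloneq \dualprod{T(t)\varphi(t)}{\varphi(t)}_{\leb^2(\Omega)^{d}}$. Since $T(t,x)$ is real symmetric, the product rule gives for a.e.\ $t$
\begin{equation*}
  \tfrac{\dd}{\dx[t]}\psi(t) = 2\Re \dualprod{T(t)\varphi(t)}{\varphi'(t)} + \dualprod{T_{t}(t)\varphi(t)}{\varphi(t)}.
\end{equation*}
Here $T_{t}$ exists a.e.\ and $\norm{T_{t}}\le L$ by Lipschitz continuity in time, uniformly in space. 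Hence
\begin{equation*}
  2\Re \dualprod{T\varphi}{\varphi'}_{\leb^2_{\rho}} = \medint\int_{\mathbb{R}} \psi'(t)\e^{-2\rho t}\dx[t] - \dualprod{T_{t}\varphi}{\varphi}_{\leb^2_{\rho}}.
\end{equation*}

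Next, integrate by parts in the first term. Provided the boundary terms vanish, this gives $\int\psi'\e^{-2\rho t}\dx[t] = 2\rho\int \psi\e^{-2\rho t}\dx[t] = 2\rho \dualprod{T\varphi}{\varphi}_{\leb^2_{\rho}} \ge 2\rho c\norm{\varphi}_{\leb^2_{\rho}}^{2}$, using $T\ge c$ pointwise. The second term is bounded by $\abs{\dualprod{T_{t}\varphi}{\varphi}_{\leb^2_{\rho}}}\le \norm{T_{t}}\norm{\varphi}_{\leb^2_{\rho}}^{2}$. Dividing by $2$ yields the claimed bound $(\rho c - \tfrac{\norm{T_{t}}}{2})\norm{\varphi}^{2}_{\leb^2_{\rho}}$.

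The main obstacle is justifying these steps rigorously. One must show that $\psi$ is absolutely continuous with the stated derivative, that the product rule holds in the Bochner setting with multiplication by the time-dependent operator $T$, and that the boundary terms $\psi(t)\e^{-2\rho t}$ vanish as $t\to\pm\infty$. I would first prove everything for $\varphi\in\mathring{\mathcal{C}}^{\infty}(\mathbb{R};\leb^2(\Omega)^{d})$. For such $\varphi$ the boundary terms are zero and the product rule is elementary: the difference quotients of $T$ are bounded by $L$ and converge a.e., so dominated convergence applies on the bounded set $\Omega$. I would then extend by density of test functions in $\sobh^{1}_{\rho}$, since both sides of the inequality are continuous in $\varphi$ with respect to the $\sobh^{1}_{\rho}$-norm; $T$ is bounded as a multiplication operator on $\leb^2_{\rho}$, and so is $T_{t}$.
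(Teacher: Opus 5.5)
Your proposal is correct and is essentially the paper's argument. The paper moves the derivative using the adjoint identity $(\partial_{t,\rho})^{\ast} = -\partial_{t,\rho} + 2\rho$ and uses symmetry of $T$ to reach $2\Re\dualprod{T\varphi}{\varphi'}_{\leb^2_{\rho}} = \Re\dualprod{(2\rho T - T_{t})\varphi}{\varphi}_{\leb^2_{\rho}}$, which is the identity you derive from the product rule for $\dualprod{T\varphi}{\varphi}$ and weighted integration by parts. Your density argument via $\mathring{\mathcal{C}}^{\infty}$ also justifies the product rule $\partial_{t}(T\varphi) = T_{t}\varphi + T\varphi'$, which the paper uses without comment.
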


\begin{proof}
  Using the fact that on $\leb^{2}_{\rho}(\mathbb{R};H)$ the adjoint $(\partial_{t,\rho})^{\ast} = -\partial_{t,\rho} + 2\rho$, cf.~\cite[cor.~3.2.6]{STW2022}, we can calculate for $\varphi \in \sobh^{1}_{\rho}\big(\mathbb{R};\leb^{2}(\Omega)^{d}\big)$,
  \begin{align*}
    \Re \dualprod{T\varphi}{\varphi'}_{\leb^{2}_{\rho}}
    &= \Re \dualprod[\big]{(-\partial_{t} + 2\rho)T\varphi}{\varphi}_{\leb^{2}_{\rho}}\\
    &= - \Re \dualprod{T_{t}\varphi + T\varphi'}{\varphi}_{\leb^{2}_{\rho}} + 2\rho \Re \dualprod{T\varphi}{\varphi}_{\leb^{2}_{\rho}}.
  \end{align*}
  The observation that symmetry of $T$ yields
  \begin{equation*}
    \Re \dualprod{T\varphi'}{\varphi}_{\leb^{2}_{\rho}} = \Re \dualprod{\varphi'}{T\varphi}_{\leb^{2}_{\rho}} = \Re \dualprod{T\varphi}{\varphi'}_{\leb^{2}_{\rho}},
  \end{equation*}
  allows the estimate
  \begin{align*}
    2\Re \dualprod{T\varphi}{\varphi'}_{\leb^{2}_{\rho}}
    &= \Re \dualprod[\big]{(2\rho - T_{t})\varphi}{\varphi}_{\leb^{2}_{\rho}}\\
    &\geq \big(2\rho c - \norm{T_{t}}\big)\norm{\varphi}_{\leb^{2}_{\rho}}^{2}.\qedhere
  \end{align*}
\end{proof}
To show the viability of the theory, we will apply it to examples discussed in \cite{Pignotti2016,Pignotti2022,Pignotti2023} on perturbed wave equations and in the process substantially generalize the well-posedness results there.

\subsubsection{Nonautonomous delay in the velocity}\label{subsubsec:Wave1}\phantom{.}\\
In \cite{Pignotti2016}, the motivating example is the system
\begin{equation}
  \label{eq:PignottiWave1}
  \left\{
  \begin{aligned}
    u_{tt}(x,t) - \Delta u(x,t) &+ b_{1}(t)u_{t}(x,t) + b_{2}(t)u_{t}(x, t\!-\!\tau) &\phantom{.}\\
    \qquad&= -\abs{u}^{p}(x,t)u(x,t) &(x,t)\in \Omega \!\times \!(0,\infty),\\
  u(x,t) &=0 &(x,t)\in\partial\Omega \!\times \!(0,\infty),\\
  u(x,0) &= u_{0}(x) &x\in\Omega,\\
  u_{t}(x,0) &= u_{1}(x) &x\in\Omega,
  \end{aligned}
  \right.
\end{equation}
where $\Delta$ denotes the Dirichlet--Laplacian on a smooth, bounded domain $\Omega\subseteq \mathbb{R}^{d}$ with initial data $(u_{0},u_{1}) \in \mathring{\sobh}^{1}(\Omega) \!\times\! \leb^2(\Omega)$ and $b_{1},b_{2}\in \leb^{\infty}(0,\infty)$ are nonnegative functions such that $b_{1}(t)b_{2}(t) =0$ for all $t>0$.\\
We generalize the scenario and only demand
\begin{itemize}[leftmargin = 4ex]
  \item $\Omega \subseteq \mathbb{R}^{d}$ to be an open, bounded Lipschitz domain.
  \item that $T\colon \mathbb{R} \!\times \!\Omega \to \mathbb{R}^{d\times d}$ is a measurable, bounded function of symmetric matrices satisfying $T \geq c >0$. Additionally, assume that $T_{t}$ and $T_{tt}$ exists almost everywhere as bounded, measurable functions.
  \item that $\tau$ satisfies \cref{ass:Tau}.
\end{itemize}
We pose the wave equation as a system in its Dirac formulation, i.e.\
\begin{equation*}
  \bigg[\partial_{t}\underbrace{\begin{pmatrix} T^{-1} & 0 \\ 0 & 1 \end{pmatrix}}_{\eqcolon M}
  + \underbrace{\begin{pmatrix} 0 & 0 \\ 0 & b_{1} + b_{2}S_{\tau} \end{pmatrix}}_{\eqcolon N}
  - \underbrace{\begin{pmatrix} 0 & \mathring{\operatorname{grad}} \\ \operatorname{div} & 0 \end{pmatrix}}_{\eqcolon -A}\bigg]
  \begin{pmatrix} u \\ v \end{pmatrix}
  = \begin{pmatrix} f \\ g \end{pmatrix},
\end{equation*}
where $S_{\tau}$ denotes the shift operator from \cref{def:Shift}. Note that the \cref{ass:Tau} are more general than in \cite{Pignotti2016}, where $\tau > 0$ was constant. However, we forgo the challenge the right-hand side in \cref{eq:PignottiWave1} poses and consider $(f,g)\in \leb^{2}_{\rho}\big(\mathbb{R};\leb^{2}(\Omega)^{d}\big) \!\times\! \leb^{2}_{\rho}\big(\mathbb{R};\leb^{2}(\Omega)\big)$. To apply well-posedness \cref{th:localExistIns}, we first have to verify its assumptions:
\begin{itemize}[leftmargin = 4ex]
  \item $M$ is selfadjoint by definition.
  \item The commutator condition in \cref{th:Picard} is satisfied, because $T_{t}$ exists almost everywhere and is a bounded, measurable function.
  \item $A$ is skew-selfadjoint by definition (note that $\mathring{\operatorname{grad}}\coloneq \operatorname{div}^{\ast}$ is the gradient with Dirichlet boundary condition\footnote{The adjoint in the definition is taken as the adjoint of $\operatorname{div}\colon \leb^2(\Omega)^{d}\supseteq \operatorname{dom}(\operatorname{div})\to \leb^2(\Omega)$.}).
  \item For well-posedness in $\leb^2_{\rho}\big(\mathbb{R};\leb^2(\Omega)^{d} \!\times\! \leb^2(\Omega)\big)$ two coercivity estimates need to be verified. First, we estimate
        \begin{align*}
          \Re\dualprod[\big]{\tfrac{\dd}{\dx[t]}\big(T^{-1}\varphi\big)}{\varphi}_{\leb^2_{\rho}}
          &= \Re \dualprod[\big]{(T^{-1})_{t}\varphi}{\varphi}_{\leb^{2}_{\rho}} + \Re \dualprod{T^{-1}\varphi'}{\varphi}_{\leb^{2}_{\rho}}\\
          &\geq - \norm{(T^{-1})_{t}}\norm{\varphi}_{\leb^{2}_{\rho}}^{2} + \big(\rho \tfrac{c}{\norm{T^{-1}}^{2}} - \tfrac{\norm{(T^{-1})_{t}}}{2}\big)\norm{\varphi}_{\leb^{2}_{\rho}}^{2}\\
          &\geq \big(\rho \tfrac{c}{\norm{T^{-1}}^{2}} - \tfrac{3\norm{T^{-1}}^{2}\norm{T_{t}}}{2}\big)\norm{\varphi}_{\leb^{2}_{\rho}}^{2},
        \end{align*}
        where \cref{th:DerivativeT} was used for $T^{-1}$, noting that $\Re T^{-1} \geq \tfrac{c}{\norm{T}^{2}}$, cf.~\cite[prop.~6.2.3]{STW2022}.
        Second, we estimate using \cref{th:Shift} and \cref{th:Derivative},
        \begin{align*}
          \quad\Re \dualprod[\big]{(\partial_{t} \!+ \!(b_{1} \!+ \!b_{2}S_{\tau}))\varphi}{\varphi}_{\leb^2_{\rho}}
          &= \Re \dualprod{\varphi'}{\varphi}_{\leb^2_{\rho}} \!+ \!\Re\dualprod[\big]{(b_{1} \!+ \!b_{2}S_{\tau})\varphi}{\varphi}_{\leb^2_{\rho}}\\
          &= \rho \norm{\varphi}_{\leb^{2}_{\rho}}^{2} \!+ \!\Re \dualprod{b_{1}\varphi}{\varphi}_{\leb^{2}_{\rho}} \!+ \!\Re \dualprod{b_{2}S_{\tau}\varphi}{\varphi}_{\leb^{2}_{\rho}}\\
          &\geq \rho \norm{\varphi}_{\leb^{2}_{\rho}}^{2} \!+ \!\inf b_{1} \norm{\varphi}_{\leb^{2}_{\rho}}^{2} \!- \!\norm{b_{2}}_{\infty}\norm{S_{\tau}\varphi}_{\leb^{2}_{\rho}}\norm{\varphi}_{\leb^{2}_{\rho}}\\
          &\geq \Big(\rho \!+ \!\inf b_{1} \!- \!\norm{b_{2}}_{\infty}\tfrac{\e^{-\rho \inf (\tau)}}{\sqrt{1-a}}\Big)\norm{\varphi}_{\leb^{2}_{\rho}}^{2}.
        \end{align*}
        Hence, coercivity in $\leb^2_{\rho}\big(\mathbb{R};\leb^{2}(\Omega)^{d} \!\times\! \leb^{2}(\Omega)\big)$ is assured for $\rho$ large enough, because $\inf (\tau)\geq 0$.
\end{itemize}
These facts thus ascertain well-posedness of the equation
\begin{equation*}
  \bigg[\partial_{t}\begin{pmatrix} T^{-1} & 0 \\ 0 & 1 \end{pmatrix}
  +\begin{pmatrix} 0 & 0 \\ 0 & b_{1} + b_{2}S_{\tau} \end{pmatrix}
  - \begin{pmatrix} 0 & \mathring{\operatorname{grad}} \\ \operatorname{div} & 0 \end{pmatrix}\bigg]
\begin{pmatrix} u \\ v \end{pmatrix}
= \begin{pmatrix} f \\ g \end{pmatrix}
\end{equation*}
for $f \in \leb^2_{\rho}\big(\mathbb{R};\leb^2(\Omega)^{d}\big)$ and $g \in \leb^2_{\rho}\big(\mathbb{R};\leb^2(\Omega)\big)$ appealing to \cref{th:Picard}. To accommodate a state-dependent right-hand side, the assumptions of \cref{th:localExistIns} need to be verified. This begs additional assumptions on the coefficients. More specifically, $b_{1},b_{2} \in \sobw^{1,\infty}(\mathbb{R})$.\\
To verify the assumptions of \cref{th:PicardHigh}, a coercivity estimate in $\sobh^{1}_{\rho}\big(\mathbb{R};\leb^2(\Omega)^{d} \!\times\! \leb^2(\Omega)\big)$ needs to be established. Since $T_{t}$ and $T_{tt}$ exist, so do the first and second derivatives of $T^{-1}$. For $\sobh^{1}_{\rho}$-coercivity of the first component we calculate
\begin{align*}
  \Re \dualprod[\big]{\tfrac{\dd}{\dx[t]}(T^{-1}\varphi)}{\varphi}_{\sobh^{1}_{\rho}}
  &= \Re \dualprod[\big]{\tfrac{\dd}{\dx[t]}(T^{-1}\varphi)}{(-\partial_{t} + 2\rho)\varphi'}_{\leb^{2}_{\rho}}\\
  &= \Re \dualprod[\big]{(T^{-1})_{t}\varphi + T^{-1}\varphi'}{(-\partial_{t} + 2\rho)\varphi'}_{\leb^{2}_{\rho}}\\
  &= - \Re \dualprod[\big]{(-\partial_{t} + 2\rho)\big[(T^{-1})_{t}\varphi + T^{-1}\varphi']}{\varphi'}_{\leb^{2}_{\rho}} \\
  &\qquad+ \Re 2\rho \dualprod[\big]{(T^{-1})_{t}\varphi + T^{-1}\varphi'}{\varphi'}_{\leb^{2}_{\rho}}\\
  &= \Re \dualprod[\big]{(T^{-1})_{tt}\varphi + 2(T^{-1})_{t}\varphi' + T^{-1}\varphi''}{\varphi'}_{\leb^{2}_{\rho}}\\
  &\geq -\norm{(T^{-1})_{tt}}\norm{\varphi}_{\leb^{2}_{\rho}}\norm{\varphi'}_{\leb^{2}_{\rho}} - 2\norm{(T^{-1})_{t}}\norm{\varphi'}_{\leb^{2}_{\rho}}^{2} \\
  &\qquad+ \big(\rho\tfrac{c}{\norm{T^{-1}}^{2}} - \tfrac{\norm{T_{t}}}{2}\big)\norm{\varphi'}_{\leb^{2}_{\rho}}\\
  &\geq \big[\rho \tfrac{c}{\norm{T^{-1}}^{2}} - \big(2 \norm{(T^{-1})_{t}}+\tfrac{1}{2}\norm{T_{t}}\big) - \tfrac{1}{\rho}\norm{(T^{-1})_{tt}}\big]\norm{\varphi}_{\sobh^{1}_{\rho}},
\end{align*}
where we again used \cref{th:DerivativeT} for $T^{-1}$. For the second component we calculate
\begin{align*}
  &\Re \dualprod[\big]{\varphi' \!+ \!(b_{1} \!+ \!b_{2}S_{\tau})\varphi}{\varphi}_{\sobh^{1}_{\rho}(\mathbb{R};\leb^2(\Omega))}\\
  &\quad= \Re \dualprod[\big]{\varphi'' \!+ \!(b_{1}' \!+ \!b_{2}'S_{\tau})\varphi \!+ \!\big(b_{1} \!+ \!b_{2}(1 \!- \!\tau')S_{\tau}\big)\varphi'}{\varphi'}_{\leb^2_{\rho}(\mathbb{R};\leb^2(\Omega))}\\
  &\quad\geq \rho \norm{\varphi'}^{2} \!- \!\big[\norm{b_{1}'}_{\infty}\norm{\varphi} \!+ \!\norm{b_{2}'}_{\infty}\norm{S_{\tau}\varphi}\big]\norm{\varphi'}
  \!+ \!\big[\inf b_{1} \!- \!\norm{b_{2}(1 \!- \!\tau')}_{\infty} \tfrac{\e^{-\rho \inf (\tau)}}{1-a}\big]\norm{\varphi'}^{2}\\
  &\quad\geq \big[\rho \!+ \!\inf (b_{1}) \!- \!\norm{b_{2}(1 \!- \!\tau')}_{\infty} \tfrac{\e^{-\rho \inf (\tau)}}{\sqrt{1-a}} - \tfrac{1}{\rho}\norm{b_{1}'}_{\infty} \!- \!\tfrac{1}{\rho}\norm{b_{2}'}_{\infty}\tfrac{\e^{-\rho \inf (\tau)}}{\sqrt{1-a}}\big]\norm{\varphi'}^{2}.
\end{align*}
Hence, coercivity in $\sobh^{1}_{\rho}$ is established for $\rho$ large enough. Consequently, the system
\begin{equation*}
  \bigg[\partial_{t}\begin{pmatrix} T^{-1} & 0 \\ 0 & 1 \end{pmatrix}
  +\begin{pmatrix} 0 & 0 \\ 0 & b_{1} + b_{2}S_{\tau} \end{pmatrix}
  - \begin{pmatrix} 0 & \mathring{\operatorname{grad}} \\ \operatorname{div} & 0 \end{pmatrix}\bigg]
\begin{pmatrix} u \\ v \end{pmatrix}
= \begin{pmatrix} 0 \\ F(\argdot, u_{(t)}, v_{(t)}) \end{pmatrix}
\end{equation*}
is well-posed after transitioning to a suitable formulation for initial values according to \cref{th:localExistIns}. We elaborate on how to obtain a suitable continuation of a given prehistory in \cref{app:InitialValues}.

\subsubsection{Nonautonomous delay paired with nonlocal operators in space}\phantom{.}\\
In \cite{Pignotti2022,Pignotti2023}, problem \eqref{eq:PignottiWave1} was more generally framed as
\begin{equation}
  \label{eq:PignottiWave2}
  \left\{
    \begin{aligned}
      u_{tt}(t) + Pu(t) + B_{1}B_{1}^{\ast}u_{t}(t) + k(t)B_{2}B_{2}^{\ast}u_{t}\big(t -\tau(t)\big) &= F\big(u(t)\big), \quad\;\; t > 0,\\
      u(0) &= u_{0}, \\
      u_{t}(0) &= u_{1},\\
      B^{\ast}u_{t}(s) &= g(s), \quad s \in [-h, 0],
    \end{aligned}
  \right.
\end{equation}
where
\begin{itemize}[leftmargin = 4ex]
  \item $P$ is a positive, selfadjoint, coercive operator on a Hilbert space $H$.
  \item $B_{1},B_{2} \in \mathcal{L}(V,H)$, where $V = \operatorname{dom}(P^{\nicefrac{1}{2}})$.
\end{itemize}
Problem \eqref{eq:PignottiWave2} can be formulated as an evolutionary equation in the form
\begin{equation}
  \label{eq:Wave2EvolEq}
  \bigg[\partial_{t}\underbrace{\begin{pmatrix} 1 & 0 \\ 0 & 1 \end{pmatrix}}_{= M}
    + \underbrace{\begin{pmatrix} 0 & 0 \\ 0 & B + C S_{\tau} \end{pmatrix}}_{= N}
    + \underbrace{\begin{pmatrix} 0 & -P^{\frac{1}{2}} \\ P^{\frac{1}{2}} & 0 \end{pmatrix}}_{= A}\bigg]
  \begin{pmatrix} u \\ v \end{pmatrix}
  = \begin{pmatrix} 0 \\ F(\argdot, u_{(\argdot)}, v_{(\argdot)}) \end{pmatrix}
\end{equation}
via the substitution $v = u_{t}$. We generalize the scenario and assume that
\begin{itemize}[leftmargin = 4ex]
  \item $B,C\colon \mathbb{R}\to \mathcal{L}(H)$ are G\^ateaux-differentiable almost everywhere.
  \item $B,B',C,C' \in \leb^{\infty}\big(\mathbb{R};\mathcal{L}(H)\big)$.
  \item $\tau$ adheres to the \cref{ass:Tau}.
\end{itemize}
Again, we argue the coercivity estimates required for \cref{th:Picard} and \cref{th:PicardHigh} for $k=1$: For the $\leb^2_{\rho}$-case we see, again utilizing \cref{th:Derivative} and \cref{th:Shift},
\begin{align*}
  &\Re \dualprod[\big]{\varphi' + (B + CS_{\tau})\varphi}{\varphi}_{\leb^2_{\rho}(\mathbb{R};H)}
  = \rho \norm{\varphi}_{\leb^{2}(\mathbb{R};H)}^{2} + \Re\dualprod[\big]{(B + CS_{\tau})\varphi}{\varphi}_{\leb^2_{\rho}(\mathbb{R};H)}\\
  &\quad\geq \rho\norm{\varphi}_{\leb^2_{\rho}}^{2} - \norm{B}_{\leb^{\infty}(\mathbb{R};\mathcal{L}(H))}\norm{\varphi}_{\leb^2_{\rho}}^{2} - \norm{C}_{\leb^{\infty}(\mathbb{R};\mathcal{L}(H))}\norm{S_{\tau}\varphi}_{\leb^2_{\rho}}\norm{\varphi}_{\leb^2_{\rho}}\\
  &\quad\geq \big[\rho - \norm{B}_{\leb^{\infty}(\mathbb{R};\mathcal{L}(H))}- \norm{C}_{\leb^{\infty}(\mathbb{R};\mathcal{L}(H))}\tfrac{\e^{-\rho \inf (\tau)}}{\sqrt{1-a}}\big]\norm{\varphi}_{\leb^2_{\rho}}^{2}.
\end{align*}
For coercivity in $\sobh^{1}_{\rho}$ we similarly calculate
\begin{align*}
  &\Re \dualprod[\big]{\varphi' + (B + CS_{\tau})\varphi}{\varphi}_{\sobh^{1}_{\rho}}\\
  &\quad= \Re \dualprod[\big]{\varphi'' + \big(B + C(1-\tau')S_{\tau}\big)\varphi' + \big(B' + C'S_{\tau}\big)\varphi}{\varphi'}_{\leb^2_{\rho}}\\
  &\quad\geq \rho \norm{\varphi'}_{\leb^{2}_{\rho}}^{2}
    - \big[\norm{B}_{\leb^{\infty}(\mathbb{R};\mathcal{L}(H))} - \norm{1-\tau'}_{\leb^{\infty}(\mathbb{R})}\norm{C}_{\leb^{\infty}(\mathbb{R};\mathcal{L}(H))}\big]\tfrac{\e^{-\rho \inf(\tau)}}{\sqrt{1-a}}\norm{\varphi'}_{\leb^{2}_{\rho}}^{2} \\
  &\qquad - \big[\norm{B'}_{\leb^{\infty}(\mathbb{R};\mathcal{L}(H))} + \norm{C'}_{\leb^{\infty}(\mathbb{R};\mathcal{L}(H))}\tfrac{\e^{-\rho \inf(\tau)}}{\sqrt{1-a}}\big]\norm{\varphi'}_{L^{2}_{\rho}}^{2}.
\end{align*}
Hence, problem \eqref{eq:Wave2EvolEq} is locally well-posed after transitioning to an adequate formulation for initial values appealing to \cref{th:localExistIns}.\\
In \cite{Pignotti2023}, some difficulty comes from the assumption on the integrability of $k \in \leb^{1}_{\mathrm{loc}}$ in \cref{eq:PignottiWave2}. We chose to forgo this difficulty here and instead permit $B,C$ to be time-dependent and more importantly, the forcing term to admit state-dependent delay. From this perspective, our results substantially generalize the results regarding well-posedness.

\subsubsection{Nonautonomous delay paired with nonlocal operators in time}\phantom{.}\\
In \cite{Pignotti2021,Pignotti2017}, problem \eqref{eq:PignottiWave1} was generalized to contain a convolution,
\begin{equation}
  \label{eq:PignottiWave3}
  \left\{
  \begin{aligned}
    u_{tt}(x,t) - \Delta u(x,t) + \medint\int_{0}^{\infty}\mu(s)\Delta u(x,t-s)& \dx[s] \phantom{.}& \\
    + ku_{t}(x, t-\tau) &= 0 &(x,t)\in \Omega \!\times \!(0,\infty),\\
    u(x,t) &=0 &(x,t)\in \partial\Omega \!\times \!(0,\infty),\\
    u(x,0) &= u_{0}(x) &x\in\Omega,\\
    u_{t}(x,0) &= u_{1}(x) &x\in\Omega,
  \end{aligned}
  \right.
\end{equation}
where $k \in \mathbb{R}$ and the memory kernel $\mu \colon \mathbb{R}_{\geq 0}\to \mathbb{R}_{\geq 0}$ is a locally absolutely continuous function satisfying
\begin{itemize}[leftmargin = 4ex]
  \item $\mu(0) = \mu_{0}>0$,
  \item $\medint\int_{0}^{\infty}\mu(s)\dx[s] = \overline{\mu} < 1$ and
  \item $\mu'(t)\leq \alpha \mu(t)$ for some $\alpha > 0$.
\end{itemize}
Using the substitutions $v = u_{t}$ and $ q = (1-\mu\ast) \operatorname{grad}u$, \cref{eq:PignottiWave3} can be formulated as an evolutionary equation
\begin{equation}
  \label{eq:Wave3EvolEq}
  \bigg[\partial_{t}\underbrace{\begin{pmatrix} 1 & 0 \\ 0 & (1-\mu \ast)^{-1} \end{pmatrix}}_{= M}
  + \underbrace{\begin{pmatrix} k S_{\tau} & 0 \\ 0 & 0 \end{pmatrix}}_{= N}
  - \underbrace{\begin{pmatrix} 0 & \operatorname{div} \\ \mathring{\operatorname{grad}} & 0 \end{pmatrix}}_{= -A}\bigg]
\begin{pmatrix} v \\ q \end{pmatrix}
= \begin{pmatrix} 0 \\ 0 \end{pmatrix}
\end{equation}
posed on $\leb^{2}_{\rho}\big(\mathbb{R};\leb^{2}(\Omega)\!\times \!\leb^{2}(\Omega)^{d}\big)$. We generalize the setting and only assume
\begin{itemize}[leftmargin = 4ex]
  \item $\mu \colon \mathbb{R}_{\geq 0}\to \mathbb{R}_{\geq 0}$ is a locally absolutely continuous function,
  \item $\medint\int_{0}^{\infty}\mu(s)\dx[s] = \overline{\mu} < 1$ and
  \item $\medint\int_{0}^{\infty}\abs{\mu'(s)}\dx[s] < \infty$.
\end{itemize}
We refer to \cref{app:Convolutions} to see that $(1-\mu\ast)^{-1} \in \mathcal{L}\big(\leb^{2}_{\rho}(\mathbb{R};\leb^{2}(\Omega)^{d})\big)$ and that $(1 - \mu\ast)^{-1}f \in \sobh^{1}_{\rho}(\mathbb{R};\leb^{2}(\Omega)^{d})$ for $f \in \leb^{2}(\mathbb{R};\leb^{2}(\Omega)^{d})$. For well-posedness of \cref{eq:Wave3EvolEq}, yet again we verify coercivity conditions:
\begin{enumerate}[label = \arabic*., leftmargin = 4ex]
  \item For the first component of the state, observe
        \begin{equation*}
          \dualprod{\partial_{t}\varphi + kS_{\tau}\varphi}{\varphi}_{\leb^{2}_{\rho}}
          = \dualprod{\varphi'}{\varphi}_{\leb^{2}_{\rho}} + k\dualprod{S_{\tau}\varphi}{\varphi}_{\leb^{2}_{\rho}}
          \geq \rho \norm{\varphi}_{\leb^{2}_{\rho}}^{2} - k\tfrac{\e^{-\rho \inf(\tau)}}{\sqrt{1-a}}\norm{\varphi}_{\leb^{2}_{\rho}}^{2},
        \end{equation*}
        appealing to \cref{th:Shift} and \cref{th:Derivative}.
  \item For the second component of the state, recall
        \begin{equation*}
          \dualprod[big]{\partial_{t}(1-\mu\ast)^{-1}\varphi}{\varphi}_{\leb^{2}_{\rho}}
          \geq (\rho - c)\norm{\varphi}_{\leb^{2}_{\rho}}^{2}
        \end{equation*}
        for some $c>0$ appealing to \cref{th:Convolution}.
\end{enumerate}
Consequently, both estimates are satisfied for $\rho>0$ large enough. If one wishes to supplant the right-hand side of \cref{eq:Wave3EvolEq} with a state-dependent inhomogeneity
\begin{equation*}
  \begin{pmatrix} F(t,u_{(t)}) \\ G(t,u_{(t)}) \end{pmatrix},
\end{equation*}
then addidionally the assumptions of \cref{th:PicardHigh} need to be satisfied for $k=1$. We similarly calculate
\begin{enumerate}[label = \arabic*., leftmargin = 4ex]
  \item for the first component of the state,
        \begin{align*}
          \dualprod{\partial_{t}\varphi + kS_{\tau}\varphi}{\varphi}_{\sobh^{1}_{\rho}}
          &= \dualprod{\varphi''}{\varphi'}_{\leb^{2}_{\rho}} + k\dualprod{\partial_{t}S_{\tau}\varphi}{\varphi'}_{\leb^{2}_{\rho}}\\
          &= \rho \norm{\varphi'}_{\leb^{2}_{\rho}}^{2} - k\dualprod[\big]{(1-\tau')S_{\tau}\varphi'}{\varphi'}_{\leb^{2}_{\rho}}\\
          &\geq \rho \norm{\varphi}_{\sobh^{1}_{\rho}}^{2} - k\norm{1-\tau'}_{\infty}\tfrac{\e^{-\rho \inf(\tau)}}{\sqrt{1-a}}\norm{\varphi}_{\sobh^{1}_{\rho}}^{2},
        \end{align*}
        appealing to \cref{th:Shift} and \cref{th:Derivative} again.
  \item for the second component of the state, making use of the notation of \cref{app:Convolutions}, we calculate
        \begin{align*}
          \dualprod[\big]{\partial_{t}(1-\mu\ast)^{-1}\varphi}{\varphi}_{\sobh^{1}_{\rho}}
          &= \dualprod[\big]{(\iu \mathrm{m} + \rho)^{2}\flt\big((1- \mu\ast)^{-1}\varphi\big)}{(\iu \mathrm{m} + \rho)\flt(\varphi)}_{\leb^{2}}.\\
          \intertext{We can supplant the expression calculated in \cref{th:FLTMu} for $\flt\big((1-\mu\ast)^{-1}\varphi\big)$,}
          \dualprod[\big]{\partial_{t}(1-\mu\ast)^{-1}\varphi}{\varphi}_{\sobh^{1}_{\rho}}
          &= \dualprod[\bigg]{(\iu \mathrm{m} + \rho)^{2}\frac{\flt(\varphi)}{(1-\sqrt{2\pi})\flt(\mu)}}{(\iu \mathrm{m} + \rho)\flt(\varphi)}_{\leb^{2}}\\
          &= \dualprod[\bigg]{\frac{\flt(\varphi'')}{(1 - \sqrt{2\pi})\flt(\mu)}}{\flt(\varphi')}_{\leb^{2}}\\
          &\geq (\rho - c)\norm{\varphi'}_{\leb^{2}_{\rho}}^{2}
        \end{align*}
        for some $c>0$ appealing to \cref{th:Convolution}.
\end{enumerate}
Hence, well-posedness is assured after passing to a suitable formulation for initial values.

\section{State-dependent boundary conditions}
\label{sec:BdyDelay}

This section aims to present a fairly comprehensive approach to evolutionary equations with state-dependent delay in boundary conditions. We start with abstract Dirichlet and Neumann conditions before tackling complex boundary conditions.

\subsection{State-dependent delay via shifting and lifting}
For some boundary conditions, the associated boundary value problem comprised of an evolutionary equation and the (inhomogeneous) boundary condition can be reduced to a single evolutionary equation
\begin{equation*}
  \bigl[\partial_{t}M(\partial_{t}) + A\bigr]u = F\bigl(\argdot, u_{(\argdot)}\bigr),
\end{equation*}
where $M$ is a material law and $A\colon H\supseteq \operatorname{dom}(A)\to H$ is a maximal accretive operator, as treated in \cite{AignerWaurick2026} or
\begin{equation*}
  \big[\partial_{t}M + N + A\big]u = F\big(\argdot, u_{(\argdot)}\big),
\end{equation*}
treated in the preceding \cref{sec:EvolEq} under the assumptions there, without a supplementary condition. This is accomplished via a shifting an lifting technique, the inspiration for which is the treatment of Dirichlet or Neumann boundary conditions in the case $A = \begin{psmallmatrix} 0 & \operatorname{div} \\ \operatorname{grad} & 0 \end{psmallmatrix}$. However, the technique is presented in a more general framework.

\subsubsection{The setup}
\label{subsec:AbstractBD}
To formulate abstract boundary conditions without explicit traces, we make use of the framework of {\em abstract boundary data spaces} from \cite{Trostorff2014}. The following brief introduction follows \cite{PicardTrostorffWaurick2016,Trostorff2014}. Let $H_{0},H_{1}$ be two Hilbert spaces and $\mathring{G}$ and $\mathring{D}$ be two densely defined operators satsifying
\begin{equation}
  \label{eq:AbstractBD}
  \begin{aligned}
  \mathring{G}&\colon H_{0}\supseteq \operatorname{dom}(\mathring{G}) \to H_{1},
  \qquad &\mathring{G} \subseteq G \coloneq - \mathring{D}^{\ast},\\
  \mathring{D}&\colon H_{1}\supseteq \operatorname{dom}(\mathring{D}) \to H_{0},
  \qquad &\mathring{D} \subseteq D \coloneq - \mathring{G}^{\ast}.
  \end{aligned}
\end{equation}
We make use of the following notation:
\begin{definition}
  Let $H$ be a Hilbert space and let $C \colon H\supseteq \operatorname{dom}(C) \to H$ be a closed, densely defined, linear operator with $0 \in \uprho(C)$. $\sobh^{1}(C)$ denotes the Hilbert space $\operatorname{dom}(C)$ equipped with the inner product
  \begin{equation*}
    \dualprod{\argdot}{\argdot}_{\sobh^{1}(C)} \coloneq \dualprod{C\argdot}{C\argdot}_{H}.
  \end{equation*}
\end{definition}
Trivially, $C \colon \sobh^{1}(C)\to H$ is a unitary operator. Coming back to the setting \eqref{eq:AbstractBD}, for $G$ and $D$ we can define $\abs{G} + \iu \coloneq \sqrt{GG^{\ast}+1}$ via the main branch of the square root and $\abs{D}+\iu$ accordingly.
\begin{theorem}[{\cite[lem.~2.1.16]{PicardMcGhee2011}}]
  Let $G \colon H_{0}\supseteq \operatorname{dom}(G) \to H_{1}$ be a closed, densely defined, linear operator. Then $G \colon \sobh^{1}(\abs{G}+\iu) \to H_{1}$ is bounded.
\end{theorem}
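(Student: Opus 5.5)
The plan is to show that the embedding norm of $\operatorname{dom}(G)$ under $\sobh^{1}(\abs{G}+\iu)$ dominates the graph norm of $G$. Then $G$, which maps the graph-normed domain to $H_{1}$ with norm at most one, is bounded on $\sobh^{1}(\abs{G}+\iu)$. First we identify the domain. Since $G$ is closed and densely defined, von Neumann's theorem gives that $G^{\ast}G$ is selfadjoint and nonnegative on $H_{0}$; this is the operator whose square root is meant here, because the relevant space is $\operatorname{dom}(G)\subseteq H_{0}$. Set $\abs{G}\coloneq (G^{\ast}G)^{\nicefrac12}$. Then $\operatorname{dom}(\abs{G}) = \operatorname{dom}(G)$ and $\norm{\abs{G}x} = \norm{Gx}$ for all $x\in\operatorname{dom}(G)$. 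This follows from the polar decomposition, or from the fact that $\operatorname{dom}(G^{\ast}G)$ is a core for both $G$ and $\abs{G}$ and that $\norm{Gx}^{2} = \dualprod{G^{\ast}Gx}{x} = \norm{\abs{G}x}^{2}$ holds on that core.

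Next we handle $\abs{G}+\iu$. Since $\abs{G}$ is selfadjoint, $\abs{G}+\iu$ is closed and densely defined with $\operatorname{dom}(\abs{G}+\iu) = \operatorname{dom}(\abs{G})$. Moreover $0\in\uprho(\abs{G}+\iu)$, because $-\iu$ lies off the real axis and hence in the resolvent set of the selfadjoint operator $\abs{G}$. So $\sobh^{1}(\abs{G}+\iu)$ is well defined. For $x$ in the domain, selfadjointness kills the cross terms in the real part:
\begin{equation*}
  \norm{(\abs{G}+\iu)x}^{2} = \norm{\abs{G}x}^{2} + 2\Re\dualprod{\abs{G}x}{\iu x} + \norm{x}^{2} = \norm{\abs{G}x}^{2} + \norm{x}^{2}.
\end{equation*}
The middle term vanishes since $\dualprod{\abs{G}x}{x}$ is real. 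Hence $\norm{x}_{\sobh^{1}(\abs{G}+\iu)}^{2} = \norm{Gx}^{2} + \norm{x}^{2}$, which is exactly the graph norm of $G$. The estimate $\norm{Gx}_{H_{1}}\leq \norm{x}_{\sobh^{1}(\abs{G}+\iu)}$ follows, and this proves the claim with operator norm at most one.

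The main obstacle is the domain identification $\operatorname{dom}(\abs{G}) = \operatorname{dom}(G)$ together with the norm equality. If citing polar decomposition is not acceptable, I would argue as follows. Both $G$ and $\abs{G}$ are closed, and $\operatorname{dom}(G^{\ast}G)$ is a core for each: for $G$ by von Neumann's theorem, and for $\abs{G}$ by spectral calculus. On this core the graph norms coincide, so taking closures gives equal domains and equal norms. The notation $\sqrt{GG^{\ast}+1}$ in the text must be read as acting on $H_{0}$, that is as $G^{\ast}G$; otherwise the map would not even be defined on $\operatorname{dom}(G)$.
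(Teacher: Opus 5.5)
Your proof is correct and complete. The paper gives no argument of its own here beyond the citation to Picard--McGhee, and your route is the standard one: identify $\sobh^{1}(\abs{G}+\iu)$ with $\operatorname{dom}(G)$ under the graph norm, using $\operatorname{dom}(\abs{G})=\operatorname{dom}(G)$, $\norm{\abs{G}x}=\norm{Gx}$ and the vanishing cross term.

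One point is worth adding. Read with $G^{\ast}G$, as you correctly note it must be, the paper defines $\abs{G}+\iu$ as $\sqrt{G^{\ast}G+1}$, not as the sum $(G^{\ast}G)^{1/2}+\iu$. Both operators have the same domain and the same norm by functional calculus, since $\abs{\lambda+\iu}^{2}=\lambda^{2}+1$ for real $\lambda$, so your computation covers that reading as well.
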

\begin{definition}[{\cite[sec.~5.2]{PicardTrostorffWaurick2016}}]
  Let $\mathring{G},\mathring{D},G,D$ be as in \eqref{eq:AbstractBD}.
  \begin{align*}
    \mathcal{BD}(G)&\coloneq \operatorname{dom}(\mathring{G})^{\perp_{\sobh^{1}(\abs{G}+\iu)}} = \ker (1 - DG),\\
    \mathcal{BD}(D)&\coloneq \operatorname{dom}(\mathring{D})^{\perp_{\sobh^{1}(\abs{D}+\iu)}} = \ker (1 - GD),
  \end{align*}
  are the {\em boundary data spaces}, where the orthogonal complements are taken w.r.t.\ the inner products in $\sobh^{1}(\abs{G}+\iu)$ and $\sobh^{1}(\abs{D}+\iu)$ respectively.
\end{definition}
\begin{remark}[{\cite[rem.~2.11]{Trostorff2014}}]
  According to the projection theorem,
  \begin{alignat*}{2}
    \sobh^{1}(\abs{G}+\iu) &= \sobh^{1}(\abs{\mathring{G}} + \iu) &&\oplus \mathcal{BD}(G),\\
    \sobh^{1}(\abs{D}+\iu) &= \sobh^{1}(\abs{\mathring{D}} + \iu) &&\oplus \mathcal{BD}(D).
  \end{alignat*}
  This can be interpreted as a decomposition result for elements in $\sobh^{1}(\abs{G}+\iu)$ and $\sobh^{1}(\abs{D}+\iu)$ into one part with “vanishing trace” (in $\sobh^{1}(\abs{\mathring{G}} +\iu)$ or $\sobh^{1}(\abs{\mathring{D}}+\iu)$ respectively) and one part carrying the whole information about the behaviour at the boundary.
\end{remark}
We mention in passing that the framework of abstract boundary data spaces allows the characterization of desirable structural properties of the operator $A = \begin{psmallmatrix} 0 & D \\ G & 0 \end{psmallmatrix}$ in terms of properties of the boundary data, cf.~\cite[thm.~3.1 \& cor.~3.8]{Trostorff2014} and is in that sense reminiscent of the theory of boundary triples, cf.~\cite{Behrndt2020} and \cite[rem.~5.4]{PicardTrostorffWaurick2016}.\\
An advantage of this setting is that the role of the traces is simply taken on by the orthogonal projections $\pi_{\mathcal{BD}(D)}$ and $\pi_{\mathcal{BD}(G)}$. Hence, the identity $\operatorname{id}\colon \mathcal{BD}(D) \to  \operatorname{dom}(D)$ serves as bounded right inverse to the projection $\pi_{\mathcal{BD}(D)}$ (same for $G$). This trivial fact mirrors the fact that many traces admit a bounded right inverse, which is a nontrivial fact to verify for a given trace at hand. These inverses can serve as a lift $L$ from the boundary space to the domain of the associated differential operator.\\
In addition, a lift $L$ can be modified to assure that $L\pi_{\mathcal{BD}(D)} d = d$ for one given $d \in \operatorname{dom}(D)$ is satisfied: Since $L\pi_{\mathcal{BD}(D)}d \in \operatorname{dom}(D)$ and subtraction of an element in $\operatorname{dom}(\mathring{D})$ does not change the trace,  one can define a modified lifting
\begin{equation*}
  L\colon \mathcal{BD}(D) \to \operatorname{dom}(D), \quad d \mapsto Ld + (d-L\pi_{\mathcal{BD}(D)}d),
\end{equation*}
which is still a continuous inverse of $\pi_{\mathcal{BD}(D)}$.

\subsubsection{Dirichlet and Neumann boundary conditions}
Abstract Dirichlet and Neumann boundary conditions can now be formulated via a lifting and shifting technique. This approach is already illustrated in \cite[ch.~12.3]{STW2022} and \cite[ch.~11.1]{JacobZwart2012}.\\
Let $L_{G}$ and $L_{D}$ be bounded right inverses of $\pi_{\mathcal{BD}(G)}$ and $\pi_{\mathcal{BD}(D)}$ as described above. We now want a solution of the (nonautonomous) evolution problem
\begin{equation}
  \label{eq:EvolEqDN}
    \left[\partial_{t}M + N + \begin{pmatrix} 0 & D \\ G & 0 \end{pmatrix}\right]
    \begin{pmatrix} u \\ v \end{pmatrix} = \begin{pmatrix} f_{1} \\ f_{2} \end{pmatrix}
\end{equation}
posed on $H = H_{0}\!\times\! H_{1}$ to attain a boundary condition of the form
\begin{equation*}
  \pi_{\mathcal{BD}(D)}v = d \quad \text{or} \quad \pi_{\mathcal{BD}(G)}u = g.
\end{equation*}
Structurally, this is equivalent to asking that the shifted functions $r = u - L_{G}g$ or $r = v - L_{D}d$ attain homogeneous boundary conditions, i.e.\ $u - L_{G}g \in \operatorname{dom} (\mathring{G})$ or $v - L_{D}d \in \operatorname{dom} (\mathring{D})$.

\begin{theorem}
  \label{th:ShiftingLifting}
  Let $(f_{1},f_{2}) \in \leb^2_{\rho}(\mathbb{R};H_{0}) \!\times\! \leb^2_{\rho}(\mathbb{R};H_{1})$, $g \in \leb^{2}_{\rho}\big(\mathbb{R};\mathcal{BD}(G)\big)$ and $d\in \leb^{2}_{\rho}\big(\mathbb{R};\mathcal{BD}(D)\big)$.
  \begin{enumerate}[label = (\roman*), leftmargin = 4ex]
    \item Let $(L_{G}g, 0) \in \operatorname{dom}(\partial_{t}M)$. Then $(u,v)$ is a solution of \cref{eq:EvolEqDN} with boundary condition $u - L_{G}g \in \operatorname{dom}(\mathring{G})$ if and only if
          \begin{equation*}
            \left[\partial_{t}M + N + \begin{pmatrix} 0 & D \\ \mathring{G} & 0 \end{pmatrix}\right]
            \begin{pmatrix} r \\ v \end{pmatrix}
            = \begin{pmatrix} f_{1} \\ f_{2} \end{pmatrix}
            - \left[\partial_{t}M + N + \begin{pmatrix} 0 & D \\ G & 0 \end{pmatrix}\right]
            \begin{pmatrix} L_{G}g \\ 0 \end{pmatrix}
          \end{equation*}
    \item Let $(0, L_{D}d) \in \operatorname{dom}(\partial_{t}M)$. Then $(u,v)$ is a solution of \cref{eq:EvolEqDN} with boundary condition $v - L_{D}d \in \operatorname{dom}(\mathring{D})$ if and only if $(u, v - L_{D}d)$ solves the problem
          \begin{equation*}
            \left[\partial_{t}M + N + \begin{pmatrix} 0 & \mathring{D} \\ G & 0 \end{pmatrix}\right]
            \begin{pmatrix} u \\ r \end{pmatrix}
            = \begin{pmatrix} f_{1} \\ f_{2} \end{pmatrix}
            - \left[\partial_{t}M + N + \begin{pmatrix} 0 & D \\ G & 0 \end{pmatrix}\right]
            \begin{pmatrix} 0 \\ L_{D}d \end{pmatrix}
          \end{equation*}
  \end{enumerate}
\end{theorem}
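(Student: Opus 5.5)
The argument is purely algebraic: it rests on linearity of $\partial_{t}M + N + A$, where $A = \begin{psmallmatrix} 0 & D \\ G & 0 \end{psmallmatrix}$, together with the inclusions $\mathring{G}\subseteq G$ and $\mathring{D}\subseteq D$. I will prove (i) in detail; (ii) follows by exchanging the roles of the two components, with $G$ replaced by $D$ and $\mathring{G}$ by $\mathring{D}$.

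For (i), set $r \coloneq u - L_{G}g$. Since $L_{G}$ is a bounded right inverse of $\pi_{\mathcal{BD}(G)}$ mapping into $\operatorname{dom}(G)$, the function $L_{G}g$ lies in $\leb^2_{\rho}\big(\mathbb{R};\operatorname{dom}(G)\big)$. Hence $\begin{psmallmatrix} 0 & D \\ G & 0 \end{psmallmatrix}(L_{G}g,0) = (0, GL_{G}g)$ is a well-defined element of $\leb^2_{\rho}(\mathbb{R};H)$. The hypothesis $(L_{G}g,0)\in\operatorname{dom}(\partial_{t}M)$ guarantees that $\partial_{t}M(L_{G}g,0)$ is defined, and $N(L_{G}g,0)$ is defined because $N$ is bounded. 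So the correction term on the right-hand side makes sense.

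For the direction ``$\Rightarrow$'', assume $(u,v)$ solves \cref{eq:EvolEqDN} and that $r = u - L_{G}g\in\operatorname{dom}(\mathring{G})$. By linearity, $(r,v) = (u,v) - (L_{G}g,0)$ lies in $\operatorname{dom}(\partial_{t}M)\cap\operatorname{dom}(A)$. Subtracting the equation evaluated at $(L_{G}g,0)$ gives the displayed identity with $A$ acting on $(r,v)$. Because $r\in\operatorname{dom}(\mathring{G})$ and $\mathring{G}\subseteq G$, we have $Gr = \mathring{G}r$, so $A$ may be replaced by $\begin{psmallmatrix} 0 & D \\ \mathring{G} & 0 \end{psmallmatrix}$ on $(r,v)$. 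For ``$\Leftarrow$'', a solution $(r,v)$ of the modified problem satisfies $r\in\operatorname{dom}(\mathring{G})\subseteq\operatorname{dom}(G)$. Setting $u \coloneq r + L_{G}g$ and adding back the correction term, again using $\mathring{G}\subseteq G$, recovers \cref{eq:EvolEqDN} together with the boundary condition.

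The main obstacle is the meaning of ``solution''. By \cref{def:weakSol}, solutions are defined through the closure of the operator, so the formal manipulation above has to be justified at the level of closures. I would first carry out the argument for strong solutions, that is, elements of $\sobh^{1}_{\rho}\cap\leb^2_{\rho}(\mathbb{R};\operatorname{dom}(\cdot))$. I would then pass to closures using the fact that adding the fixed vector $(L_{G}g,0)$, for which every term is defined, commutes with taking limits of graph sequences: if $(x_{n},\mathcal{A}x_{n})\to(x,y)$, then $(x_{n}-\xi,\mathcal{A}x_{n}-\mathcal{A}\xi)\to(x-\xi,y-\mathcal{A}\xi)$, where $\xi = (L_{G}g,0)$ and $\mathcal{A}$ denotes the operator $\partial_{t}M + N + A$. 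To identify the two closures one needs that the boundary condition is preserved in the limit. This holds because $\operatorname{dom}(\mathring{G})$ is closed in the graph norm of $G$, as $\mathring{G}$ is closed. With that, both sides of the equivalence describe the same element of the graph.
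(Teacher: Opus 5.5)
Your strong-solution argument is the paper's proof, and it is correct. The paper only remarks that algebraic manipulation transfers solutions in both directions and that the boundary condition is then evident. Your version does the same: you check that the correction term $\partial_{t}M(L_{G}g,0)+N(L_{G}g,0)+(0,GL_{G}g)$ lies in $\leb^2_{\rho}(\mathbb{R};H)$, subtract or add it, and use $\mathring{G}\subseteq G$ to exchange $G$ and $\mathring{G}$ on $r$.

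The extension to closures in your last paragraph, however, fails at the step you single out, namely that the boundary condition is preserved in the limit.
Convergence $(x_{n},\mathcal{A}x_{n})\to(x,y)$ in the graph of $\mathcal{A}=\partial_{t}M+N+A$ controls only the sum $\partial_{t}Mx_{n}+Nx_{n}+Ax_{n}$, not $Gu_{n}$ by itself. So you never get convergence of $u_{n}$ in $\leb^2_{\rho}(\mathbb{R};\operatorname{dom}(G))$, and the closedness of $\operatorname{dom}(\mathring{G})$ in the graph norm of $G$ cannot be applied. In fact, solutions in the sense of \cref{def:weakSol} in general do not take values in $\operatorname{dom}(G)$ at all, so $u-L_{G}g\in\operatorname{dom}(\mathring{G})$ is not even meaningful for them.

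There is also no independent weak solution of \cref{eq:EvolEqDN} to compare with. The maximal operator $\begin{psmallmatrix}0&D\\G&0\end{psmallmatrix}$ is in general not accretive: its boundary form $\Re(\langle Dv,u\rangle+\langle Gu,v\rangle)$ changes sign under $v\mapsto -v$. Hence \cref{th:Picard} yields no solution operator for it.

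What your translation of graph sequences does establish is one direction. If $(r,v)$ is the weak solution of the shifted problem, whose operator $\begin{psmallmatrix}0&D\\\mathring{G}&0\end{psmallmatrix}$ is skew-selfadjoint, then $(r+L_{G}g,v)$ is a limit of strong solutions of \cref{eq:EvolEqDN}. These strong solutions satisfy the boundary condition, and their right-hand sides converge to $(f_{1},f_{2})$.

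You therefore have two consistent options. Either restrict the statement to strong solutions, which is all the paper's algebraic proof covers. Or take this limit as the definition of a weak solution of the boundary value problem; then the equivalence is definitional rather than a consequence of the closedness of $\operatorname{dom}(\mathring{G})$.
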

\begin{proof}
  Algebraic manipulation shows that a solution of the initial problem translates to a solution of the shifted problem and vice versa. The attainment of the boundary condition is easily observed.
\end{proof}

\subsubsection{Lipschitz condition}
\label{subsubsec:LipschitzCond}
We now turn towards state-dependent boundary conditions
\begin{equation*}
  t\mapsto g(t, u_{(t)}, v_{(t)}) \quad \text{or} \quad t\mapsto d(t,u_{(t)},v_{(t)}).
\end{equation*}
In order to apply \cref{th:localExistIns} for the shifted and lifted problem, a Lipschitz condition for the inhomogeneity is required. Since the lifting is a pointwise action and a Lipschitz continuous operator, the properties required for right-hand sides of problem \eqref{eq:EvolEqSDD} translate to properties of $L_{G}g$ and $L_{D}d$ respectively. Specifically, for almost uniform Lipschitz-continuity of the lifted inhomogeneity we have to verify the following estimate uniformly in time,
\begin{multline*}
  \norm[\big]{L_{G}g(t, \varphi^{(1)}, \psi^{(1)}) - L_{G}g(t, \varphi^{(2)}, \psi^{(2)})}_{H_{0}} \\
  \leq C_{\alpha}\big[\norm[\big]{\varphi^{(1)}-\varphi^{(2)}}_{\sobh^{1}(-h,0;H_{0})} + \norm[\big]{\psi^{(1)}-\psi^{(2)}}_{\sobh^{1}(-h,0;H_{1})}\big]
\end{multline*}
for all $(\varphi^{(j)},\psi^{(j)}) \in \sobw^{1,\infty}(-h,0;H_{0}) \!\times\! \sobw^{1,\infty}(-h,0;H_{1})$, $j \in \{1,2\}$ satisfying\footnote{Corresponding to the definition of $V_{\alpha}$ from \cref{def:AlmLC}.}
\begin{equation}
  \label{eq:ValphaReq}
  \max_{j \in \{1,2\}} \big\{\norm{\varphi^{(j)}}_{\infty}, \norm{\psi^{(j)}}_{\infty}\big\} \leq \alpha.
\end{equation}
Using that $L$ is a Lipschitz-continuous operator on its corresponding boundary data space, the requirement for an inhomogeneity $F$ in \cref{def:AlmLC} changes to
\begin{multline}
  \label{eq:Lipschitz1}
  \norm[\big]{g(t,\varphi^{(1)},\psi^{(1)}) - g(t,\varphi^{(2)},\psi^{(2)})}_{\mathcal{BD}(G)} \\\leq C_{\alpha}\big(\norm{\varphi^{(1)}-\varphi^{(2)}}_{\sobh^{1}(-h,0;H_{0})} + \norm{\psi^{(1)} - \psi^{(2)}}_{\sobh^{1}(-h,0;H_{1})}\big)
\end{multline}
for positive constants $C_{\alpha}>0$ for all $\varphi^{(1)},\varphi^{(2)}\in \sobw^{1,\infty}(-h,0;H_{0})$ and $\psi^{(1)},\psi^{(2)}\in \sobw^{1,\infty}(-h,0;H_{1})$ satisfying \eqref{eq:ValphaReq}.\\
If one wants to define the right-hand side on histories of traces, i.e.\ the inhomogeneity is of the form
\begin{equation*}
  F(t,\varphi,\psi) \coloneq \widetilde{F}\big(t, \pi_{\mathcal{BD}(G)}(\varphi), \pi_{\mathcal{BD}(D)}(\psi)\big),
\end{equation*}
then the estimate
\begin{multline}
  \label{eq:Lipschitz2}
  \norm[\big]{\widetilde{F}(t, \varphi^{(1)},\psi^{(1)}) \!- \!\widetilde{F}(t,\varphi^{(2)},\psi^{(2)})}_{H} \\
  \leq C_{\alpha} \big(\norm{\varphi^{(1)} \!- \!\varphi^{(2)}}_{\mathcal{BD}(G)} + \norm{\psi^{(1)}\!-\!\psi^{(2)}}_{\mathcal{BD}(D)}\big)
\end{multline}
is required to hold for all $\varphi^{(j)} \in \sobw^{1,\infty}(-h,0;\mathcal{BD}(G))$ and $\psi^{(j)} \in \sobw^{1,\infty}(-h,0;\mathcal{BD}(D))$, $j \in \{1,2\}$, satisfying \cref{eq:ValphaReq}.
\begin{theorem}
  \label{th:localExistDN}
  Let $M,M',N$ satisfy the assumptions of \cref{th:Picard} and of \cref{th:PicardHigh} for $k=1$. Let $g\colon [0,\infty) \!\times\! \leb^{2}(-h,0;H_{0}) \!\times\! \leb^{2}(-h,0;H_{1})\to \mathcal{BD}(G)$ be
  \begin{itemize}[leftmargin = 4ex]
    \item almost uniformly Lipschitz-continuous in the sense of \cref{eq:Lipschitz1},
    \item satisfy $L_{G}g \in \operatorname{dom}(\partial_{t}M)$,
    \item regularity preserving and
    \item satisfy $g(\argdot,0,0)\in \leb^{2}_{\rho}(0,\infty;H_{0} \!\times\! H_{1})$.
  \end{itemize}
  Let $\Phi \in \sobh^{1}(-h,0;H_{0}\!\times\! H_{1})$ with $\norm{\Phi'}_{\infty}<\infty$ and let $Z$ be an $\sobh^{1}_{\rho}(-h,\infty;H_{0} \!\times\! H_{1})$-extension of $\Phi$ satisfying
  \begin{enumerate}[label=\roman*), leftmargin=5ex]
    \item $Z\vert_{(-h,0]}=\Phi$ and
    \item $Z\vert_{[0,\infty)}\in \mathcal{C}^{1}(0,\infty;H_{0} \!\times\! H_{1})$ with $\norm{Z'(0)}_{H_{0}\times H_{1}}\leq \norm{\Phi'}_{\infty}$.
  \end{enumerate}
  Then the problem
  \begin{equation*}
    \left\{
      \begin{aligned}
        \left[\partial_{t}M + N + \begin{pmatrix} 0 & D \\ \mathring{G} & 0 \end{pmatrix}\right]
        \begin{pmatrix} u \\ v \end{pmatrix} &= \begin{pmatrix} f_{1} \\ f_{2} \end{pmatrix},\\
        \pi_{\mathcal{BD}(G)}v &= g\big(\argdot, (I_{\rho}u + Z_{1})_{(\argdot)}, (I_{\rho}v + Z_{2})_{(\argdot)}\big),
      \end{aligned}
    \right.
  \end{equation*}
  for $f_{1}\in \sobh^{1}_{\tilde{\rho}}(0,\infty;H_{0})$ and $f_{2}\in \sobh^{1}_{\tilde{\rho}}(0,\infty;H_{1})$ for some $\tilde{\rho}>0$ satisfying
  \begin{equation*}
    \left[[\partial_{t}M + N]\begin{pmatrix} L_{G}g \\ 0 \end{pmatrix}\right](0)
    + \begin{pmatrix} (GL_{g}g)(0) \\ 0 \end{pmatrix}
    = \begin{pmatrix} f_{1}(0) \\ f_{2}(0) \end{pmatrix},
  \end{equation*}
  admits a unique local solution $u \in \sobh^{1}(0,T;H_{0} \!\times\! H_{1})$.
\end{theorem}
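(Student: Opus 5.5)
The plan is to reduce the boundary value problem to an instance of \cref{eq:FPPIntIns} by the shifting and lifting of \cref{th:ShiftingLifting}(i), and then to invoke \cref{th:localExistIns}. For a candidate solution $(u,v)$, write $w=(u,v)$ and
\[
\gamma(w)(t)\coloneq g\big(t,(I_{\rho}u+Z_{1})_{(t)},(I_{\rho}v+Z_{2})_{(t)}\big).
\]
Put $r\coloneq u-L_{G}\gamma(w)$. By \cref{th:ShiftingLifting}(i), applicable since $L_{G}g\in\operatorname{dom}(\partial_{t}M)$, the boundary value problem is equivalent to
\[
\Big[\partial_{t}M+N+\begin{pmatrix}0&D\\ \mathring{G}&0\end{pmatrix}\Big]\begin{pmatrix}r\\ v\end{pmatrix}=\begin{pmatrix}f_{1}\\ f_{2}\end{pmatrix}-\Big[\partial_{t}M+N+\begin{pmatrix}0&D\\ G&0\end{pmatrix}\Big]\begin{pmatrix}L_{G}\gamma(w)\\ 0\end{pmatrix}.
\]
The operator $A=\begin{psmallmatrix}0&D\\ \mathring{G}&0\end{psmallmatrix}$ is skew-selfadjoint, since $D=-\mathring{G}^{\ast}$. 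Hence it is maximal accretive, and together with the assumptions on $M,M',N$ the hypotheses of \cref{th:Picard} and \cref{th:PicardHigh} for $k=1$ hold.

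The right-hand side still depends on $u=r+L_{G}\gamma(w)$ rather than on the new unknown $(r,v)$ alone, so I first rewrite it as a state-dependent inhomogeneity
\[
\widetilde{G}\big(t,(I_{\rho}r+\tilde Z_{1})_{(t)},(I_{\rho}v+Z_{2})_{(t)}\big).
\]
This requires carrying the lift through $I_{\rho}$. The cleanest way is to pose the fixed point directly in the variable $(r,v)$: the map $\Gamma_{\rho,\alpha}$ sends $(r,v)$ to $S_{\rho}$ applied to the right-hand side, with $u$ replaced by $r+L_{G}\gamma$ inside $\gamma$. Under the metric projection $\pi_{\alpha}$, the dependence $w\mapsto L_{G}\gamma(w)$ is Lipschitz by \cref{eq:Lipschitz1} and the boundedness of $L_{G}$, with constant $\|L_{G}\|C_{\alpha}$ measured in $\sobh^{1}(-h,0;\cdot)$ of histories. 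The terms $\partial_{t}M L_{G}\gamma$, $NL_{G}\gamma$ and $GL_{G}\gamma$ are the new inhomogeneity. They satisfy assumption \ref{ass:A}. Almost uniform Lipschitz continuity comes from \cref{eq:Lipschitz1} and the boundedness of $N$ and of $G$ on $\mathcal{BD}(G)$; the latter holds because $G$ is bounded from $\sobh^{1}(\abs{G}+\iu)$ and $\mathcal{BD}(G)$ carries that norm. Regularity preservation comes from the corresponding hypothesis on $g$. Finally, $g(\cdot,0,0)\in\leb^{2}_{\rho}$ gives the integrability at $0$.

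With this in place, I run the contraction argument of \cref{th:localExistIns} (as in \cref{app:WellPosedness}), now for a fixed point $(r,v)\in\sobh^{1}_{\rho}(0,\infty;H_{0}\times H_{1})$.
\begin{itemize}
\item The contraction estimate uses $\|S_{\rho}\|_{\mathcal{L}(\sobh^{1}_{\rho})}\le 1/c$ from \cref{th:PicardHigh}, the bound $\|I_{\rho}\|=1/\rho$, and the Lipschitz constant above, choosing $\rho$ large.
\item The compatibility condition in the statement is exactly $f(0)=-G(0,\Phi)$ of \cref{th:localExistIns}, applied to the shifted right-hand side. It ensures the fixed point vanishes at $t=0$.
\item The continuity argument then shows that the projection $\pi_{\alpha}$ is inactive on some interval $[0,T]$.
\end{itemize}
Setting $u=r+L_{G}\gamma(w)$ recovers a local solution. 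It lies in $\sobh^{1}(0,T)$ because $\gamma(w)\in\sobh^{1}$ by regularity preservation. Uniqueness transfers back through the equivalence in \cref{th:ShiftingLifting}(i): two solutions $(u,v)$ give two fixed points $(r,v)$, which coincide by uniqueness in \cref{th:localExistIns}.

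I expect the main obstacle to be the term $\partial_{t}M L_{G}\gamma(w)$. It contains a time derivative of a state-dependent expression, and its Lipschitz estimate in $\leb^{2}_{\rho}$, let alone in $\sobh^{1}_{\rho}$, is not immediate. I would first try to avoid differentiating it: write it as $\partial_{t}$ of something, commute it through $S_{\rho}$, and treat it as being of the form $I_{\rho}^{-1}$ of a Lipschitz term. If that fails, I would fall back on the assumption $L_{G}g\in\operatorname{dom}(\partial_{t}M)$ combined with regularity preservation, and absorb the extra derivative using the $\sobh^{1}_{\rho}$ well-posedness. The required estimate would be a Lipschitz bound in $\sobh^{1}(-h,0)$ of histories for $\partial_{t}ML_{G}\gamma$. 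The dependence inside $\gamma$ through $I_{\rho}u$ is exactly what provides the needed smoothing, since $I_{\rho}$ gains one derivative, and this should compensate for the derivative lost in $\partial_{t}M$.
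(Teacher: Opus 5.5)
Your skeleton is the paper's: translate the problem via \cref{th:ShiftingLifting}(i), check that the lifted data satisfy assumption \ref{ass:A}, and invoke \cref{th:localExistIns}. The genuine gap is the step you flag yourself and then leave open.

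The state-dependent part of the shifted right-hand side contains $\partial_{t}M(L_{G}\gamma(w),0)$. The contraction in \cref{th:localExistIns} needs an $\leb^{2}_{\rho}$-Lipschitz bound for this term, with constant tending to $0$ as $\rho\to\infty$. Since $\partial_{t}M\varphi=M\partial_{t}\varphi+M'\varphi$, this in general amounts to controlling $\norm{\gamma(w_{1})-\gamma(w_{2})}_{\sobh^{1}_{\rho}}$. But \eqref{eq:Lipschitz1} only controls $\norm{\gamma(w_{1})-\gamma(w_{2})}_{\leb^{2}_{\rho}}$, and regularity preservation is a membership statement with no estimate attached. So your claim that $\partial_{t}ML_{G}\gamma$ satisfies assumption \ref{ass:A} is unjustified; $NL_{G}\gamma$ and $GL_{G}\gamma$ are harmless. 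None of your proposed remedies repairs this:
\begin{itemize}
\item The derivative gained by $I_{\rho}$ is not spare. In the proof of \cref{th:localExistIns} it is exactly what turns the $\sobh^{1}(-h,0)$-Lipschitz bound \eqref{eq:Lipschitz1} into a bound by $\leb^{2}$-histories of $u_{1}-u_{2}$.
\item Differentiating $t\mapsto g\big(t,(I_{\rho}u+Z_{1})_{(t)},\dots\big)$ in time would require $g$ to be differentiable in $t$ and in the history, with Lipschitz derivatives. It would also bring in $\sobh^{1}$-histories of $u$ itself. None of this is assumed.
\item Commuting $\partial_{t}$ through $S_{\rho}$ is unavailable in the nonautonomous setting (see the final remark of \cref{app:WellPosedness}). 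Even autonomously it only relocates the derivative, since $\norm{\partial_{t}S_{\rho}\varphi}_{\leb^{2}_{\rho}}=\norm{S_{\rho}\varphi}_{\sobh^{1}_{\rho}}$ is not bounded by $\norm{\varphi}_{\leb^{2}_{\rho}}$.
\item Your fallback, $L_{G}g\in\operatorname{dom}(\partial_{t}M)$ plus $\sobh^{1}_{\rho}$ well-posedness, gives only the self-mapping property. A contraction in $\sobh^{1}_{\rho}$ would need $\gamma$ to be Lipschitz into $\sobh^{2}_{\rho}$, which is worse.
\end{itemize}
Separately, defining the unknown through $r=u-L_{G}\gamma(w)$, with $\gamma$ depending on $u$, is circular. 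Iterate instead in $w=(u,v)$ via $w\mapsto(L_{G}\gamma(w),0)+S_{\rho}[\dots]$; the added term is an $\leb^{2}_{\rho}$-contraction for large $\rho$ by \eqref{eq:Lipschitz1}.

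The paper's proof does not supply the missing estimate either. It performs the same shift, applies \cref{th:localExistIns}, and only checks that regularity preservation survives the lift because $L_{G}\partial_{t}=\partial_{t}L_{G}$. The Lipschitz requirement is formulated for $L_{G}g$ itself in \cref{subsubsec:LipschitzCond}, and the term $\partial_{t}M(L_{G}g,0)$ is never examined. So the obstacle you identified is real and is not resolved by the paper's argument.

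One clean way to make the argument rigorous is an extra structural assumption such as $M(L_{G}g,0)=0$. This is the analogue of the Neumann heat example in \cref{app:AbstractBD}, where $(0,L_{\operatorname{div}}d)\in\ker M$. Then only $N(L_{G}\gamma(w),0)$ and $(0,GL_{G}\gamma(w))$ remain. These are $\leb^{2}_{\rho}$-Lipschitz with constant $O(\rho^{-1/2})$ by \eqref{eq:Lipschitz1} and the boundedness of $N$, $L_{G}$ and $G\vert_{\mathcal{BD}(G)}$, so your contraction argument goes through. Alternatively, one can assume outright that the whole lifted inhomogeneity, including $\partial_{t}M(L_{G}g,0)$, satisfies assumption \ref{ass:A}.
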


\begin{proof}
  By assumption, the problem can be translated into the shifted and lifted problem from \cref{th:ShiftingLifting} (i),
  \begin{equation*}
    \left[\partial_{t}M + N + \begin{pmatrix} 0 & D \\ \mathring{G} & 0 \end{pmatrix}\right]
    \begin{pmatrix} r \\ v \end{pmatrix}
    = \begin{pmatrix} f_{1} \\ f_{2} \end{pmatrix}
    - \left[\partial_{t}M + N + \begin{pmatrix} 0 & D \\ G & 0 \end{pmatrix}\right]
    \begin{pmatrix} L_{G}g \\ 0 \end{pmatrix}.
  \end{equation*}
  For an application of \cref{th:localExistIns}, we realize that the assumption that $t \mapsto L_{G}g(t, u_{(t)},v_{(t)})$ is regularity preserving remains unaffected by lifting or projecting, because of tensor product structure:\\
  Indeed, by assumption, for $u \in \sobh^{1}_{\rho}(0,\infty;H_{0})$ with $\norm{u'}_{\infty}<\infty$ and $v \in \sobh^{1}_{\rho}(0,\infty;H_{1})$ with $\norm{v'}_{\infty}<\infty$, we have $t\mapsto g(t, u_{(t)},v_{(t)})$ as an element in $\sobh^{1}_{\rho}\big(0,\infty;\mathcal{BD}(G)\big)$. Now using $L_{G}\partial_{t} = \partial_{t}L_{G}$, $t\mapsto L_{G}g(t,u_{(t)},v_{(t)})$ being in $\sobh^{1}_{\rho}(0,\infty;H_{0})$ follows.
\end{proof}

In a similar fashion, local existence and uniqueness results for the remaining cases can be formulated:
\begin{enumerate}[label = (\roman*), leftmargin = 5ex]
  \item For an inhomogeneity $I_{\rho}g(\argdot, u_{(\argdot)},v_{(\argdot)})$ corresponding to \cref{th:localExistOut}.
  \item For a Lipschitz condition adhering to \cref{eq:Lipschitz2}.
  \item For a Neumann boundary datum $\pi_{\mathcal{BD}(D)}v = d(t,u_{(t)},v_{(t)})$, appealing to part (ii) of \cref{th:ShiftingLifting}.
\end{enumerate}

In \cref{app:AbstractBD} we elaborate on the connection between abstract boundary data spaces and classical trace spaces and hint at versions of \cref{th:localExistDN} for the cases
\begin{enumerate}[label = (\roman*), leftmargin = 5ex]
  \item $A = \begin{psmallmatrix} 0 & \operatorname{div} \\ \operatorname{grad} & 0 \end{psmallmatrix}$ on $H = \leb^2(\Omega) \!\times\! \leb^2(\Omega)^{d}$,
  \item $A = \begin{psmallmatrix}  0 & \operatorname{div}T \operatorname{grad} \\ \iota & 0 \end{psmallmatrix}$ on $H = \leb^2(\Omega) \!\times\! \sobh^{1}(\Omega)$ and
  \item $A = \begin{psmallmatrix} 0 & \operatorname{curl} \\ \operatorname{curl} & 0 \end{psmallmatrix}$ on $H = \leb^2(\Omega)^{3} \!\times\! \leb^2(\Omega)^{3}$.
\end{enumerate}

\subsection{State-dependent delay in extended state spaces}
\label{subsec:ExtendedState}
In the previous section boundary conditions were directly included into a system in form of an inhomogeneity via a shifting and lifting approach. In this subsection a different strategy is pursued: We extend the state space to incorporate trace maps into the spatial operator $A$. This approach allows the formulation of boundary conditions as part of the system itself. To that end, the formalism of abstract $\operatorname{div}$-$\operatorname{grad}$ systems from \cite{PicardSeidlerTrostorffWaurick2016} is used, the central concepts of which we briefly recall.
\begin{definition}[{\cite[def.~1.1]{PicardSeidlerTrostorffWaurick2016}}]
  Let $C\colon X_{0}\supseteq X_{1} \to Y$, be a densely defined, closed, linear operator with domain $X_{1}$ between Hilbert spaces $X_{0}$ and $Y$. The operator $A \coloneq \begin{psmallmatrix} 0 & -C^{\ast} \\ C & 0 \end{psmallmatrix}$ defines an {\em abstract $\operatorname{div}$-$\operatorname{grad}$ system} if $Y$ is a direct sum of Hilbert spaces $Y_{k}$, i.e.
  \begin{equation*}
    Y = \bigoplus_{1 \leq k \leq n}Y_{k}.
  \end{equation*}
\end{definition}
The boundary condition encoded into an abstract $\operatorname{div}$-$\operatorname{grad}$ system $A$ can be described via the formal adjoint.
\begin{definition}
  Let $X_{0}$ and $X_{1}$ be Hilbert spaces and $C\colon X_{0}\supseteq \operatorname{dom}(C) \to X_{1}$ be linear, densely defined and closed. Set
  \begin{equation*}
    C^{\diamond}\colon X_{1}\to \operatorname{dom}(C)',\quad (C^{\diamond}\varphi)(x)\coloneq \dualprod{\varphi}{C x}_{X_{1}} \quad\text{for}\ \varphi \in X_{1},x\in \operatorname{dom}(C)\text{.}
  \end{equation*}
\end{definition}
Using the formal adjoint $^{\diamond}$, the adjoint of $C$ or equivalently, the exact boundary conditions encoded into the domain of $C^{\ast}$ can be characterized.
\begin{lemma}[{\cite[lem.~1.4]{PicardSeidlerTrostorffWaurick2016}}]
  Let $X_{0},X_{1},Y$ be Hilbert spaces, $X_{1}\subseteq X_{0}$ dense and $C\colon X_{0}\supseteq X_{1} \to Y$ a closed, linear operator. Then the adjoint $C^{\ast}$ is densely defined and given by
  \begin{equation*}
    C^{\ast} = C^{\diamond} \cap (Y \oplus X_{0}),
  \end{equation*}
  which is the same as to say
  \begin{equation*}
    C^{\ast} = \dset{(y,x) \in Y \oplus X'_{1}}{C^{\diamond}y = x \,\text{and}\ x\in X_{0}}.
  \end{equation*}
\end{lemma}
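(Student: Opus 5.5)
This lemma is a characterization of the Hilbert space adjoint, so the plan is to prove two inclusions between graphs. We identify $C^{\ast}$ with its graph $\dset{(y,x)\in Y\oplus X_{0}}{\forall z\in X_{1}\colon \dualprod{y}{Cz}_{Y} = \dualprod{x}{z}_{X_{0}}}$. We identify $C^{\diamond}$ with its graph in $Y\oplus X_{1}'$, where $X_{1}=\operatorname{dom}(C)$ carries the graph norm of $C$. Here $X_{0}$ is regarded as a subspace of $X_{1}'$ via $x\mapsto \dualprod{x}{\argdot}_{X_{0}}$; this map is well defined and continuous because $\norm{z}_{X_{0}}\le\norm{z}_{\sobh^{1}}$ on $X_{1}$. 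It is also injective because $X_{1}$ is dense in $X_{0}$, so the intersection $C^{\diamond}\cap(Y\oplus X_{0})$ is meaningful.

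First, note that $C^{\diamond}$ is defined on all of $Y$ and maps into $X_{1}'$. Indeed, $\abs{\dualprod{y}{Cz}_{Y}}\le \norm{y}_{Y}\norm{Cz}_{Y}\le \norm{y}_{Y}\norm{z}_{X_{1}}$, so $C^{\diamond}y$ is a bounded functional. Next comes the inclusion $C^{\ast}\subseteq C^{\diamond}\cap(Y\oplus X_{0})$. Take $(y,x)\in C^{\ast}$. Then $x\in X_{0}$, and for all $z\in X_{1}$ we have $(C^{\diamond}y)(z)=\dualprod{y}{Cz}=\dualprod{x}{z}_{X_{0}}$. Hence $C^{\diamond}y$ equals the image of $x$ under the embedding. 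Conversely, if $(y,x)\in Y\oplus X_{0}$ with $C^{\diamond}y=x$ in $X_{1}'$, then unwinding the embedding gives $\dualprod{y}{Cz}_{Y}=\dualprod{x}{z}_{X_{0}}$ for all $z\in X_{1}$. This is exactly the defining property of $(y,x)\in C^{\ast}$. The two descriptions in the statement are then merely rewritings of the same set.

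It remains to show that $C^{\ast}$ is densely defined. This is the standard fact that for a closed, densely defined operator the adjoint is densely defined, with $C^{\ast\ast}=\overline{C}=C$. I would either cite it or prove it directly. For the direct proof, suppose $w\in Y$ is orthogonal to $\operatorname{dom}(C^{\ast})$. Then $(0,w)\perp \dset{(-C^{\ast}y,y)}{y\in\operatorname{dom}(C^{\ast})}$ in $X_{0}\oplus Y$. This set is the orthogonal complement of the graph of $C$, so by closedness $(0,w)$ lies in the graph of $C$, and hence $w=C0=0$.

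The only delicate point, and the one I would treat carefully, is the embedding $X_{0}\hookrightarrow X_{1}'$. Its injectivity relies on the density of $X_{1}$ in $X_{0}$. Without it, "$x\in X_{0}$" would not be well defined as a statement about an element of $X_{1}'$. Everything else is routine unwinding of definitions.
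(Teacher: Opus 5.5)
Your proof is correct and is the standard direct verification. The two inclusions are the definitions of $C^{\ast}$ and $C^{\diamond}$ read through the embedding $X_{0}\hookrightarrow X_{1}'$, whose injectivity (from the density of $X_{1}$) you rightly single out, and you obtain density of $\operatorname{dom}(C^{\ast})$ from $\operatorname{graph}(C)^{\perp}=\{(-C^{\ast}y,y)\}$ together with closedness. The paper gives no proof of its own; it cites \cite[lem.~1.4]{PicardSeidlerTrostorffWaurick2016}, whose argument is this same unwinding of definitions.
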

An abstract $\operatorname{div}$-$\operatorname{grad}$ system $A = \begin{psmallmatrix} 0 & - C^{\ast} \\ C & 0 \end{psmallmatrix}$ can be defined from any densely defined, closed $C_{0}\colon X_{0}\supseteq \operatorname{dom}(C_{0}) \to Y$.
\begin{proposition}[{\cite[prop.~1.9]{PicardSeidlerTrostorffWaurick2016}}]
  Let $X_{0},Y_{0},\dots ,Y_{n}$ be Hilbert spaces, $C_{0}\colon X_{0}\supseteq \operatorname{dom}(C_{0})\to Y_{0}$ densely defined and closed. Denote $X_{1}\coloneq \big(\operatorname{dom}(C_{0}), \sqrt{\norm{\argdot}^{2} + \norm{C\argdot}^{2}}\big)$ and let $C_{k} \in \mathcal{L}(X_{1},Y_{k})$, $k \in \{1,\dots, n\}$. Then
  \begin{equation*}
    C = \begin{pmatrix} C_{0} \\ \vdots \\ C_{n} \end{pmatrix} \colon X_{0}\supseteq \operatorname{dom}(C_{0}) \to \bigoplus_{k \in \{0,\dots, n\}}Y_{k}
  \end{equation*}
  generates an abstract $\operatorname{div}$-$\operatorname{grad}$ system.
\end{proposition}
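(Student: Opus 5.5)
The plan is to check the three ingredients of the definition of an abstract $\operatorname{div}$-$\operatorname{grad}$ system directly for the column operator $C$. These are: $C$ is linear and densely defined, $C$ is closed, and the target space is an orthogonal direct sum of Hilbert spaces. The third holds by construction, since $Y\coloneq \bigoplus_{k=0}^{n}Y_{k}$ carries the product inner product $\dualprod{(y_k)_k}{(z_k)_k}_{Y}=\sum_{k}\dualprod{y_k}{z_k}_{Y_k}$. Linearity is clear because each $C_k$ is linear. Density holds because $\operatorname{dom}(C)=\operatorname{dom}(C_{0})$ and $C_{0}$ is densely defined in $X_{0}$. So the only real content is closedness.

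For closedness I compare the graph norm of $C$ with the norm of $X_{1}$. For $x\in\operatorname{dom}(C_0)$,
\begin{equation*}
  \norm{x}_{X_0}^{2}+\norm{Cx}_{Y}^{2}=\norm{x}_{X_0}^{2}+\norm{C_0x}_{Y_0}^{2}+\sum_{k=1}^{n}\norm{C_kx}_{Y_k}^{2}.
\end{equation*}
This quantity lies between $\norm{x}_{X_1}^{2}$ and $\bigl(1+\sum_{k=1}^{n}\norm{C_k}_{\mathcal{L}(X_1,Y_k)}^{2}\bigr)\norm{x}_{X_1}^{2}$, using boundedness of $C_k\in\mathcal{L}(X_1,Y_k)$. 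Hence the graph norm of $C$ is equivalent to the $X_1$-norm. Since $C_0$ is closed, $X_1$ is complete, and therefore $\operatorname{dom}(C)$ is complete under the graph norm of $C$, which is equivalent to $C$ being closed.

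Alternatively, I can argue with sequences. Let $x_m\to x$ in $X_0$ and $Cx_m\to y=(y_0,\dots,y_n)$ in $Y$. Then $C_0x_m\to y_0$, so closedness of $C_0$ gives $x\in\operatorname{dom}(C_0)$ and $C_0x=y_0$. Consequently $x_m\to x$ in $X_1$, and continuity of $C_k$ on $X_1$ yields $C_kx_m\to C_kx$, so $y_k=C_kx$ for $k\geq 1$. Thus $(x,y)$ lies in the graph of $C$.

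I do not expect a genuine obstacle. The only point needing care is that the $C_k$ are bounded with respect to the graph norm of $C_0$ and not the $X_0$-norm. This is exactly why closedness of $C_0$ must be used first, to upgrade convergence in $X_0$ to convergence in $X_1$, before continuity of the $C_k$ can be invoked. As a by-product, $A=\begin{psmallmatrix}0&-C^{\ast}\\ C&0\end{psmallmatrix}$ is then skew-selfadjoint on $X_0\oplus Y$, because $C$ is closed and densely defined.
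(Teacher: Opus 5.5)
Your proposal is correct: the only substantive point is closedness of $C$, and the equivalence of the graph norm of $C$ with the $X_{1}$-norm settles it. You rightly read the statement's $\norm{C\argdot}$ as $\norm{C_{0}\argdot}$. The paper does not prove this itself but imports it from \cite[prop.~1.9]{PicardSeidlerTrostorffWaurick2016}, and your graph-norm argument, or its sequential variant, is the standard proof of that statement.
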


\subsection{Examples}
\label{subsubsec:ExtendedStateExample}
Let $\Omega \subseteq \mathbb{R}^{d}$ be open and bounded with Lipschitz boundary $\partial\Omega$ and unit outward normal field $\nu$. We first give an overview over some instances of abstract $\operatorname{div}$-$\operatorname{grad}$ systems, mostly without proof. For the definition of the trace operators involved see \cref{subsec:Traces}.

\subsubsection{$\operatorname{div}$ and $\operatorname{grad}$}
\begin{enumerate}[leftmargin = 4ex]
  \item\label{it:DivGradVersion1} The choices
        \begin{alignat*}{3}
          C_{0}\coloneq &&\operatorname{grad} &\colon \leb^2(\Omega) \supseteq \sobh^{1}(\Omega) &&\to \leb^2(\Omega)^{d},\\
          C_{1}\coloneq &&\gamma_{0} &\colon \leb^2(\Omega) \supseteq \sobh^{1}(\Omega) &&\to \leb^2(\partial\Omega),
        \end{alignat*}
        define the operator
        \begin{equation*}
          A = \begin{pmatrix} 0 & - \begin{pmatrix} \operatorname{grad} \\ \gamma_{0} \end{pmatrix}^{\!\!\ast}\\
            \begin{pmatrix} \operatorname{grad} \\ \gamma_{0} \end{pmatrix} & 0
          \end{pmatrix},
        \end{equation*}
        which forms an abstract $\operatorname{div}$-$\operatorname{grad}$ system on the state space
        \begin{equation*}
          H = \leb^2(\Omega) \times \leb^2(\Omega)^{d} \times \leb^2(\partial\Omega).
        \end{equation*}
        From $\begin{psmallmatrix} \mathring{\operatorname{grad}} \\ 0 \end{psmallmatrix} \subseteq C$ we can infer $C^{\ast} \subseteq \begin{pmatrix} -\operatorname{div} & 0 \end{pmatrix}$. Easy calculations show that
        \begin{align*}
          \begin{pmatrix} \operatorname{grad} \\ \gamma_{0} \end{pmatrix}^{\!\!\ast} \begin{pmatrix} q \\ \eta \end{pmatrix} &= -\operatorname{div}q,\\
          \dualprod{\operatorname{div}q}{\varphi}_{\leb^2(\Omega)} &= - \dualprod{q}{\operatorname{grad}\varphi}_{\leb^2(\Omega)} + \dualprod{\eta}{\gamma_{0} \varphi}_{\leb^2(\partial\Omega)}
        \end{align*}
        for $\varphi \in \sobh^{1}(\Omega)$. Hence $\eta = \gamma_{\nu}q$ describes the boundary condition attached to the operator $A$.
  \item\label{it:DivGradVersion2} The choices
        \begin{alignat*}{3}
          C_{0}\coloneq && \operatorname{div} &\colon \leb^2(\Omega)^{d} \supseteq \hat{\sobh}(\operatorname{div};\Omega) &&\to \leb^2(\Omega),\\
          C_{1}\coloneq && \gamma_{\nu} &\colon \leb^2(\Omega)^{d} \supseteq \hat{\sobh}(\operatorname{div};\Omega) &&\to \leb^2(\partial\Omega),
        \end{alignat*}
        define the abstract $\operatorname{div}$-$\operatorname{grad}$ system
        \begin{equation*}
          A = \begin{pmatrix} 0 & - \begin{pmatrix} \operatorname{div} \\ \gamma_{\nu} \end{pmatrix}^{\!\!\ast}\\
            \begin{pmatrix} \operatorname{div} \\ \gamma_{\nu} \end{pmatrix} & 0
          \end{pmatrix}
        \end{equation*}
        on the state space
        \begin{equation*}
          H = \leb^2(\Omega)^{d} \times \leb^2(\Omega) \times \leb^2(\partial\Omega).
        \end{equation*}
        In this context, $\hat{\sobh}(\operatorname{div};\Omega)$ denotes those $\sobh(\operatorname{div};\Omega)$-functions $\varphi$ with normal trace $\gamma_{\nu}\varphi \in \leb^{2}(\partial\Omega)\subseteq \sobh^{-\frac{1}{2}}(\partial\Omega)$, equipped with the inner product
        \begin{equation*}
          \dualprod{\varphi}{\psi}_{\hat{\sobh}(\operatorname{div};\Omega)}
          \coloneq \dualprod{\operatorname{div}\varphi}{\operatorname{div}\psi}_{\leb^{2}(\Omega)}
          + \dualprod{\varphi}{\psi}_{\leb^{2}(\Omega)^{d}}
          + \dualprod{\gamma_{\nu}\varphi}{\gamma_{\nu}\psi}_{\leb^{2}(\partial\Omega)}.
        \end{equation*}
        Simple calculations show that
        \begin{align*}
          \begin{pmatrix} \operatorname{\operatorname{div}} \\ \gamma_{\nu} \end{pmatrix}^{\!\!\ast} \begin{pmatrix} q \\ \eta \end{pmatrix} &= -\operatorname{grad}q,\\
          \dualprod{\operatorname{grad}q}{\varphi}_{\leb^2(\Omega)} &= - \dualprod{q}{\operatorname{div}\varphi}_{\leb^2(\Omega)} + \dualprod{\eta}{\gamma_{0} \varphi}_{\leb^2(\partial\Omega)}
        \end{align*}
        for $\varphi \in \hat{\sobh}^{1}(\operatorname{div};\Omega)$. Hence $\eta = \gamma_{0}q$ describes the boundary condition attached to the operator $A$.
\end{enumerate}

\subsubsection{$\operatorname{div} T \operatorname{grad}$ and $\iota$}
Here we consider an elliptic divergence form operator $\operatorname{div}T \operatorname{grad}$ with $T\colon \Omega\to \mathbb{R}^{d\times d}$ being a measurable and bounded function of sym-metric matrices satisfying $T \geq c > 0$.
\begin{enumerate}[leftmargin = 4ex]\setcounter{enumi}{2}
  \item\label{it:LaplaceVersion1} First let
        \begin{align*}
          X &\coloneq \dset{u \in \mathring{\sobh}^{1}(\Omega)}{T \operatorname{grad}u \in \sobh(\operatorname{div};\Omega)},\\
          \hat{X} &\coloneq \dset{u \in X}{\gamma_{\nu}T \operatorname{grad}u \in \leb^{2}(\partial\Omega)},
        \end{align*}
        where $\hat{X}$ is equipped with
        \begin{multline*}
          \qquad\dualprod{u}{v}_{\hat{X}}\coloneq \dualprod{\operatorname{div}T \operatorname{grad}u}{\operatorname{div}T \operatorname{grad}v}_{\leb^{2}(\Omega)}\\
          + \dualprod{T \operatorname{grad}u}{\operatorname{grad}v}_{\leb^{2}(\Omega)^{d}}
          + \dualprod{\gamma_{\nu}T \operatorname{grad}u}{\gamma_{\nu}T \operatorname{grad}v}_{\leb^{2}(\partial\Omega)}.
        \end{multline*}
        The choices
        \begin{alignat*}{4}
          C_{0}\coloneq &&\operatorname{div}T \operatorname{grad} &\colon \mathring{\sobh}^{1}(\Omega) &&\supseteq \hat{X} &&\to \leb^2(\Omega),\\
          C_{1}\coloneq &&\gamma_{n}\coloneq \gamma_{\nu}T \operatorname{grad} &\colon \leb^2(\Omega) &&\supseteq \hat{X} &&\to \leb^2(\partial\Omega),
        \end{alignat*}
        lead to the operator
        \begin{equation*}
          A = \begin{pmatrix} 0 & - \begin{pmatrix} \operatorname{div}T \operatorname{grad} \\ \gamma_{n} \end{pmatrix}^{\!\!\ast}\\
            \begin{pmatrix} \operatorname{div}T \operatorname{grad} \\ \gamma_{n} \end{pmatrix} & 0
          \end{pmatrix}
        \end{equation*}
        defining an abstract $\operatorname{div}$-$\operatorname{grad}$ system on the state space
        \begin{equation*}
          H = \mathring{\sobh}^{1}(\Omega) \times \leb^2(\Omega) \times \leb^2(\partial\Omega),
        \end{equation*}
        where $\mathring{\sobh}^{1}(\Omega)$ is equipped with the inner product
        \begin{equation*}
          \dualprod{x}{y}_{\mathring{\sobh}^{1}(\Omega)} \coloneq \dualprod{T \operatorname{grad}x}{\operatorname{grad}y}_{\leb^2(\Omega)^{d}}.
        \end{equation*}
        For $\varphi \in \hat{X}$ and $(u,v)\in \sobh^{1}(\Omega) \times \leb^2(\partial\Omega)$ we can calculate
        \begin{align*}
          \quad&\dualprod[\bigg]{\begin{pmatrix}\operatorname{div}T \operatorname{grad} \\ \gamma_{n}\end{pmatrix}\varphi}{\begin{pmatrix}u \\ v\end{pmatrix}}_{\leb^2(\Omega)\times \leb^2(\partial\Omega)}\\
          &= \dualprod[\big]{\operatorname{div}T \operatorname{grad} \varphi}{u}_{\leb^2(\Omega)}
            \!+\! \dualprod[\big]{\gamma_{n}\varphi}{v}_{\leb^2(\partial\Omega)}\\
          &= - \dualprod[\big]{T \operatorname{grad}\varphi}{\operatorname{grad}u}_{\leb^2(\Omega)^{d}}
            \!+\! \dualprod[\big]{\gamma_{\nu}T \operatorname{grad}\varphi}{\gamma_{0}u}_{\leb^2(\partial\Omega)}
            \!+\! \dualprod[\big]{\gamma_{\nu}T \operatorname{grad}\varphi}{v}_{\leb^2(\partial\Omega)}\\
          &= - \dualprod{\iota \varphi}{u}_{\mathring{\sobh}^{1}(\Omega)}
            \!+\! \dualprod{\gamma_{\nu}T \operatorname{grad}\varphi}{\gamma_{0}u}_{\leb^2(\partial\Omega)}
            \!+\! \dualprod{\gamma_{\nu}T \operatorname{grad}\varphi}{v}_{\leb^2(\partial\Omega)},
        \end{align*}
        where $\iota\colon \leb^2(\Omega)\supseteq \sobh^{1}(\Omega) \to \leb^2(\Omega)$ is the unbounded inclusion. Thus, the boundary condition contained in $A$ is $-v = \gamma_{0}u$. We also refer to \cite[thm.~B.1]{Aigner2026} for the $\leb^{2}(\partial\Omega)$-density of the Neumann trace.
  \item\label{it:LaplaceVersion2} The choices
        \begin{alignat*}{4}
          C_{0}\coloneq && \iota &\colon \leb^2(\Omega) &&\supseteq \sobh^{1}(\Omega) &&\to \leb^2(\Omega),\\
          C_{1}\coloneq &&\gamma_{0} &\colon \leb^2(\Omega) &&\supseteq \sobh^{1}(\Omega) &&\to \leb^2(\partial\Omega),
        \end{alignat*}
        lead to the operator
        \begin{equation*}
          A = \begin{pmatrix} 0 & - \begin{pmatrix} \iota \\ \gamma_{0} \end{pmatrix}^{\!\!\ast}\\
            \begin{pmatrix} \iota \\ \gamma_{0} \end{pmatrix} & 0
          \end{pmatrix},
        \end{equation*}
        which defines an abstract $\operatorname{div}$-$\operatorname{grad}$ system on the state space
        \begin{equation*}
          H = \leb^2(\Omega) \times \sobh^{1}(\Omega) \times \leb^2(\partial\Omega),
        \end{equation*}
        where $\sobh^{1}(\Omega)$ is equipped with
        \begin{equation*}
          \dualprod{u}{v}_{\sobh^{1}} \coloneq \dualprod{T \operatorname{grad}u}{\operatorname{grad}v}_{\leb^{2}(\Omega)^{d}} + \dualprod{\gamma_{0}u}{\gamma_{0}v}_{\leb^{2}(\partial\Omega)}.
        \end{equation*}
        Here we calculate for $\varphi \in \sobh^{1}(\Omega)$, $u \in \hat{X}$ and $v\in \leb^2(\partial\Omega)$,
        \begin{align*}
          &\dualprod[\bigg]{\begin{pmatrix} \iota \\ \gamma_{0}\end{pmatrix}\varphi}{\begin{pmatrix} u \\ v \end{pmatrix}}_{\sobh^{1}(\Omega)\times \leb^2(\partial\Omega)}\\
          &\quad= \dualprod[\big]{\varphi}{u}_{\sobh^{1}(\Omega)} + \dualprod[\big]{\gamma_{0}\varphi}{v}_{\leb^2(\partial\Omega)}\\
          &\quad= \dualprod[\big]{T\operatorname{grad}\varphi}{\operatorname{grad}u}_{\leb^2(\Omega)^{d}} + \underbrace{\dualprod[\big]{\gamma_{0}\varphi}{\gamma_{0}u}_{\leb^2(\partial\Omega)}}_{=0} + \dualprod[\big]{\gamma_{0}\varphi}{v}_{\leb^2(\partial\Omega)}\\
          &\quad= -\dualprod[\big]{\varphi}{\operatorname{div}T \operatorname{grad}u}_{\leb^2(\Omega)} + \dualprod[\big]{\gamma_{0}\varphi}{\gamma_{\nu}T \operatorname{grad}u}_{\leb^2(\partial\Omega)} + \dualprod[\big]{\gamma_{0}\varphi}{v}_{\leb^2(\partial\Omega)}.
        \end{align*}
        Thus, the induced boundary condition on $A$ is $-v = \gamma_{\nu}T \operatorname{grad}u = \gamma_{n}u$.
\end{enumerate}

\subsubsection{$\operatorname{curl}$ and $\operatorname{curl}$}
Let $\Omega \subseteq \mathbb{R}^{3}$ be an open, bounded Lipschitz domain.
\begin{enumerate}[leftmargin = 4ex]\setcounter{enumi}{4}
  \item\label{it:MaxwellVersion1} First let
        \begin{equation*}
          \hat{\sobh}(\operatorname{curl};\Omega) \coloneq \dset{\varphi \in \sobh(\operatorname{curl};\Omega)}{\gamma_{\tau}\varphi \in \leb^{2}_{\tau}(\partial\Omega)},
        \end{equation*}
        equipped with
        \begin{equation*}
          \dualprod{u}{v}_{\hat{\sobh}^{1}(\operatorname{curl};\Omega)} \coloneq \dualprod{u}{v}_{\leb^{2}(\Omega)^{3}} + \dualprod{\operatorname{curl}u}{\operatorname{curl}v}_{\leb^{2}(\Omega)^{3}} + \dualprod{\pi_{\tau}u}{\pi_{\tau}v}_{\leb^{2}_{\tau}(\partial\Omega)}.
        \end{equation*}
        The choices
        \begin{alignat*}{4}
          C_{0}\coloneq &&-\operatorname{curl} &\colon \leb^2(\Omega)^{3} &&\supseteq \hat{\sobh}(\operatorname{curl};\Omega) &&\to \leb^2(\Omega)^{3},\\
          C_{1}\coloneq && \gamma_{\tau} &\colon \leb^2(\Omega)^{3} &&\supseteq \hat{\sobh}(\operatorname{curl};\Omega) &&\to \leb^2_{\tau}(\partial\Omega),
        \end{alignat*}
        lead to the operator
        \begin{equation*}
          A = \begin{pmatrix}
            \begin{pmatrix} 0 & 0 & 0 \\ 0 & 0 & 0 \\ 0 & 0 & 0 \end{pmatrix} &
            - \begin{pmatrix} - \operatorname{curl} \\ \gamma_{\tau} \end{pmatrix}^{\ast} \\
            \begin{pmatrix} -\operatorname{curl} \\ \gamma_{\tau} \end{pmatrix} &
            \begin{pmatrix} \begin{pmatrix} 0 & 0 & 0 \\ 0 & 0 & 0 \\ 0 & 0 & 0 \end{pmatrix}
              & \begin{pmatrix} 0 & 0 & 0 \\ 0 & 0 & 0 \\ 0 & 0 & 0 \end{pmatrix}\\
              \begin{pmatrix} 0 & 0 & 0 \\ 0 & 0 & 0 \\ 0 & 0 & 0 \end{pmatrix}
              & \begin{pmatrix} 0 & 0 & 0 \\ 0 & 0 & 0 \\ 0 & 0 & 0 \end{pmatrix} \end{pmatrix}
          \end{pmatrix}
        \end{equation*}
        defining an abstract $\operatorname{div}$-$\operatorname{grad}$ system on the state space
        \begin{equation*}
          H = \leb^2(\Omega)^{3} \times \leb^2(\Omega)^{3} \times \leb^2_{\tau}(\partial\Omega).
        \end{equation*}
        A calculation similar to \cite[sec.~2.3]{PicardSeidlerTrostorffWaurick2016} shows that $\begin{psmallmatrix} - \operatorname{curl} \\ \gamma_{\tau} \end{psmallmatrix}^{\ast} \subseteq \begin{pmatrix} -\operatorname{curl} & 0 \end{pmatrix}$ and that the boundary condition carried by the operator $A$ is
        \begin{equation*}
          \forall (u,v,w) \in \operatorname{dom}(A)\colon w = - \pi_{\tau}v.
        \end{equation*}
  \item\label{it:MaxwellVersion2} Alternatively, the choices
        \begin{alignat*}{4}
          C_{0}\coloneq && -\operatorname{curl} &\colon \leb^2(\Omega)^{3} &&\supseteq \hat{\sobh}(\operatorname{curl};\Omega) &&\to \leb^2(\Omega)^{3},\\
          C_{1}\coloneq && \pi_{\tau} &\colon \leb^2(\Omega)^{3} &&\supseteq \hat{\sobh}(\operatorname{curl};\Omega) &&\to \leb^2(\partial\Omega)^{3},
        \end{alignat*}
        lead to the operator
        \begin{equation*}
          A = \begin{pmatrix}
            \begin{pmatrix} 0 & 0 & 0 \\ 0 & 0 & 0 \\ 0 & 0 & 0 \end{pmatrix} &
            - \begin{pmatrix} - \operatorname{curl} \\ \pi_{\tau} \end{pmatrix}^{\ast} \\
            \begin{pmatrix} -\operatorname{curl} \\ \pi_{\tau} \end{pmatrix} &
            \begin{pmatrix} \begin{pmatrix} 0 & 0 & 0 \\ 0 & 0 & 0 \\ 0 & 0 & 0 \end{pmatrix}
              & \begin{pmatrix} 0 & 0 & 0 \\ 0 & 0 & 0 \\ 0 & 0 & 0 \end{pmatrix}\\
              \begin{pmatrix} 0 & 0 & 0 \\ 0 & 0 & 0 \\ 0 & 0 & 0 \end{pmatrix}
              & \begin{pmatrix} 0 & 0 & 0 \\ 0 & 0 & 0 \\ 0 & 0 & 0 \end{pmatrix} \end{pmatrix}
          \end{pmatrix},
        \end{equation*}
        which defines an abstract $\operatorname{div}$-$\operatorname{grad}$ system on the state space
        \begin{equation*}
          H = \leb^2(\Omega)^{3} \times \leb^2(\Omega)^{3} \times \leb^2_{\tau}(\partial\Omega),
        \end{equation*}
        cf.~\cite[sec.~2.3]{PicardSeidlerTrostorffWaurick2016}. There it is also shown that $\begin{psmallmatrix} - \operatorname{curl} \\ \pi_{\tau} \end{psmallmatrix}^{\ast} \subseteq \begin{pmatrix} -\operatorname{curl} & 0 \end{pmatrix}$ and that the boundary condition carried by $A$ is
        \begin{equation*}
          \forall (u,v,w)\in \operatorname{dom}(A)\colon w = -\gamma_{\tau}v.
        \end{equation*}
\end{enumerate}

\subsubsection{Wentzell boundary conditions}
Let $\Omega \subseteq \mathbb{R}^{d}$ be open and bounded with Lip-schitz boundary $\partial\Omega$ and unit outward normal field $\nu$. Assume that $\partial\Omega$ is a manifold, where it is possible to define the covariant derivative $\operatorname{grad}_{\partial\Omega}$ as an operator from $\leb^2(\partial\Omega)$ to $\leb^2_{\tau}(\partial\Omega)$. In this segment we want to briefly recapitulate a more complex instance of an abstract $\operatorname{div}$-$\operatorname{grad}$ systems from \cite[sec.~2.2]{PicardSeidlerTrostorffWaurick2016}.
\begin{enumerate}[leftmargin = 4ex]\setcounter{enumi}{6}
  \item\label{it:Wentzell} First let
        \begin{equation*}
          X \coloneq \Big(\operatorname{dom}(\operatorname{grad}_{\partial\Omega}\gamma_{0}), \sqrt{\norm{\operatorname{grad}_{\partial\Omega}\gamma \argdot}_{\leb^2(\partial\Omega)}^{2} + \norm{\argdot}^{2}_{\sobh^{1}(\Omega)}}\Big).
        \end{equation*}
        Then one can define
        \begin{alignat*}{5}
          &C_{0}\coloneq &\operatorname{grad}&&\colon \leb^{2}(\Omega) &&\supseteq X &&\to \leb^{2}(\Omega)^{d},\\
          &C_{1}\coloneq &\gamma_{0}&&\colon \leb^{2}(\Omega) &&\supseteq X &&\to \leb^{2}(\partial\Omega),\\
          &C_{2}\coloneq &\operatorname{grad}_{\partial\Omega}\gamma_{0}&&\colon \leb^{2}(\Omega) &&\supseteq X &&\to \leb^{2}(\partial\Omega).
        \end{alignat*}
        As shown in \cite[sec.~2.2]{PicardSeidlerTrostorffWaurick2016}, the operator
        \begin{equation*}
          A = \begin{pmatrix}
            \begin{pmatrix} 0 \end{pmatrix} &
            - \begin{pmatrix} \operatorname{grad} \\ \gamma_{0} \\ \operatorname{grad}_{\partial\Omega}\gamma_{0} \end{pmatrix}^{\!\!\ast} \\
            \begin{pmatrix} \operatorname{grad} \\ \gamma_{0} \\ \operatorname{grad}_{\partial\Omega}\gamma_{0} \end{pmatrix} &
        \begin{pmatrix} 0 & 0 & 0 \\ 0 & 0 & 0 \\ 0 & 0 & 0 \end{pmatrix}
        \end{pmatrix}
        \end{equation*}
        defines an abstract $\operatorname{div}$-$\operatorname{grad}$ system on the state space
        \begin{equation*}
          H = \leb^2(\Omega) \times \leb^2(\Omega) \times \leb^2(\Omega) \times \leb^2(\partial\Omega).
        \end{equation*}
        Moreover, it can be shown that
        \begin{equation*}
          \begin{pmatrix} \operatorname{grad} \\ \gamma_{0} \\ \operatorname{grad}_{\partial\Omega}\gamma_{0} \end{pmatrix}^{\!\!\ast} \subseteq \begin{pmatrix} - \operatorname{div} & 0 & 0 \end{pmatrix}
        \end{equation*}
        and the boundary condition encoded into $A$ is
        \begin{equation*}
          \forall (u,v,\eta_{1},\eta_{2}) \in \operatorname{dom}(A)\colon \gamma_{\nu}v + \eta_{1} = - \operatorname{grad}_{\partial\Omega}^{\diamond}\eta_{2}.
        \end{equation*}
\end{enumerate}

\subsection{Applications}
\begin{example}[heat equation]
  The heat equation can be formulated with any of the formulations \eqref{it:DivGradVersion1}, \eqref{it:DivGradVersion2} or \eqref{it:Wentzell}. Formalism \eqref{it:DivGradVersion1} lends itself to the description of Robin boundary conditions with possibly nonautonomous delay in the Neumann trace; formalism \eqref{it:Wentzell} to the description of Wentzell--Robin boundary conditions with the Laplace--Beltrami operator. Some more details regarding those models are provided in \cref{app:Applications}. Here, we discuss the model utilizing formulation \eqref{it:DivGradVersion2} instead, i.e.
  \begin{equation}
    \label{eq:HeatEq}
    \left[\partial_{t}\begin{pmatrix} 0 & 0 & 0 \\ 0 & 1 & 0 \\ 0 & 0 & m \end{pmatrix}
    + \begin{pmatrix} a^{-1} & 0 & 0 \\ 0 & 0 & 0 \\ 0 & 0 & n \end{pmatrix}
    + \begin{pmatrix} 0 & - \begin{pmatrix} \operatorname{div} \\ \gamma_{\nu} \end{pmatrix}^{\!\!\ast}\\
    \begin{pmatrix} \operatorname{div} \\ \gamma_{\nu} \end{pmatrix} & 0
    \end{pmatrix}\right]
    \!\begin{pmatrix} u \\ v \\ w \end{pmatrix}
    \!= \!\begin{pmatrix} f \\ g \\ h \end{pmatrix},
  \end{equation}
  where $m,n \in \mathcal{L}\big(\leb^2_{\rho}(\mathbb{R};\leb^2(\partial\Omega))\big)$. The full set of equations described by the system reads
  \begin{align*}
    a^{-1}u + \operatorname{grad}v &= f,\\
    \partial_{t}v + \operatorname{div}u &= g,\\
    \partial_{t}mw + nw + \gamma_{\nu}u &= h,\\
    w + \gamma_{0}v &= 0,
  \end{align*}
  where the last equation describes the boundary condition carried by $A$. Hence, this system desribes a heat equation, with boundary condition
  \begin{equation*}
    -\partial_{t}m \gamma_{0}v - n\gamma_{0}v + \gamma_{\nu}u = h.
  \end{equation*}
  In the case where we have a strong solution to \cref{eq:HeatEq}, it is justified to subsitute $u$ into the last expression, appealing to the differential equation $a^{-1}u + \operatorname{grad}v = f$, and we obtain
  \begin{equation}
    \label{eq:HeatBdy}
    -\partial_{t}m\gamma_{0}v - n\gamma_{0}v + \gamma_{\nu}T(f- \operatorname{grad}v) = h.
  \end{equation}
  The choices $f = 0$ and $m = 0$ yield
  \begin{equation*}
    n\gamma_{0}v + \gamma_{\nu}T\operatorname{grad}v = -h,
  \end{equation*}
  which is a Robin boundary condition with possibly nonautonomously delayed Dirichlet trace $\gamma_{0}v$. We now compare this approach with the example in \cite{Nicaise2009}, where the following one-dimensional heat equation was studied:
  \begin{equation}
    \label{eq:HeatNicaise}
    \left\{
      \begin{aligned}
        u_{t}(x, t) - \alpha u_{xx}(x, t) &= 0, &(x,t) \in (0,\pi) \!\times\! \mathbb{R},\\
        u(0, t) &= 0, &t \in \mathbb{R},\\
        u_{x}(\pi, t) &= -\mu_{0}u(\pi, t) - \mu_{1}u\big(\pi, t -\tau(t)\big), &t \in \mathbb{R},\\
        u(x, 0) &= u_{0}(x), &x\in (0,\pi),\\
        u\big(\pi, t - \tau(0)\big) &= f_{0}\big(t - \tau(0)\big), & t\in (0,\tau(0)),
      \end{aligned}
    \right.
  \end{equation}
  where $\alpha>0$, $\mu_{0},\mu_{1}\geq 0$ are constants and $\tau \in \sobw^{2,\infty}_{\mathrm{loc}}(\mathbb{R})$ satisfies
  \begin{itemize}[leftmargin = 4ex]
    \item $0 < \tau_{0}\leq \tau$ and
    \item $\tau' < 1$.
  \end{itemize}
  In our setup \eqref{eq:HeatEq} we generalize the situation immediately to bounded, open Lip-schitz domains $\Omega\subseteq \mathbb{R}^{d}$ with an elliptic divergence form operator $\operatorname{div} T \operatorname{grad}$ and a delay functional $\tau$ satisfying \cref{ass:Tau}. We also replace $\mu_{0}$ and $\mu_{1}$ with $b_{0},b_{1}\in \leb^{\infty}(\mathbb{R})$ and with the choice $n = b_{1} + b_{2}S_{\tau}$, where $S_{\tau}$ is the shift operator, cf.~\cref{def:Shift}, we obtain the boundary condition
  \begin{equation*}
    b_{1}(t)\gamma_{0}v(t) + b_{2}(t)\gamma_{0}v(t-\tau(t)) + \gamma_{\nu}T \operatorname{grad}v(t) = - h.
  \end{equation*}
  Well-posedness of problem \eqref{eq:HeatEq} then follows verbatim as in \cref{subsubsec:Wave1}. For state-dependence of $h$ we additionally require $b_{1},b_{2}\in \sobw^{1,\infty}(\mathbb{R})$. We comment, that in \cref{subsubsec:Wave1}, we require $\tau' < a <1$, but the assumption $\tau \in \sobw^{2,\infty}_{\mathrm{loc}}(\mathbb{R})$ assures continuity of $\tau'$ and hence the stricter assumption $\tau' < a < 1$ holds on any finite interval $[0,T]$. This suffices for our purpose, because for state-dependent delay we are only interested in local well-posedness.\\
  Thus, our model provides a substantial improvement over the approach from \cite{Nicaise2009} regarding well-posedness: The regularity assumptions on the delay functional $\tau$ are lower, $\tau > 0$ is not required, $\mu_{0}$, $\mu_{1}$ can be multiplication operators and the equation can be posed on a domain $\Omega \subseteq \mathbb{R}^{d}$ with a divergence form operator $\operatorname{div}T \operatorname{grad}$ instead of the Dirichlet--Laplacian.\\
  We comment on the boundary conditions of problem \eqref{eq:HeatNicaise}: There, the delayed boun-dary condition is only in effect on a part of the boundary. This is possible on a domain $\Omega \subseteq \mathbb{R}^{d}$ as well by adequately restricting the differential operators to a suitable domain, e.g.
  \begin{equation*}
    \widetilde{\operatorname{grad}}\coloneq \operatorname{grad}\vert_{D_{1}}\quad\text{and}\quad
    \widetilde{\operatorname{div}}\coloneq \operatorname{div}\vert_{D_{2}},
  \end{equation*}
  with
  \begin{align*}
    D_{1} &\coloneq \{\varphi \in \sobh^{1}(\Omega)\colon (\gamma_{0}\varphi)\vert_{\Gamma} = 0\},\\
    D_{2} &\coloneq \{\varphi \in \sobh(\operatorname{div};\Omega)\colon (\gamma_{\nu}\varphi)\vert_{\Gamma} = 0\},
  \end{align*}
  where $\Gamma \subseteq \partial\Omega$ is a part of the boundary, that is itself a Lipschitz domain. The third component of the state-space then needs to be appropriately swapped and the state-space changed to
  \begin{equation*}
    H = \leb^2(\Omega)^{d} \times \leb^2(\Omega) \times \leb^2(\partial\Omega \!\setminus \!\Gamma).
  \end{equation*}
  We leave the details to the reader, but refer to \cite{Aigner2026} for an approach addressing a split boundary condition that illustrates the idea in the context of the wave equation.
\end{example}

\begin{remark}[Wentzell boundary conditions]
  The choices $f = 0$ and $m = 1$ in \cref{eq:HeatBdy} lead to the boundary condition
  \begin{equation*}
    -\partial_{t}\gamma_{0}v - n \gamma_{0}v - \gamma_{\nu}T \operatorname{grad} v = h.
  \end{equation*}
  An appeal to the equation $\partial_{t}v = g - \operatorname{div}u = g + \operatorname{div}T \operatorname{grad}v$ yields the boundary condition
  \begin{equation*}
    - \gamma_{0}g - \gamma_{0}\operatorname{div} T \operatorname{grad} v - n\gamma_{0}v - \gamma_{\nu}T \operatorname{grad} v = h,
  \end{equation*}
  which corresponds to a Wentzell--Robin type boundary condition, cf.~\cite{Goldstein2006,KunzeMuiPloss2026}, but possibly includes a nonautonomous delay in the Dirichlet trace because of the pre-sence of $n$. Note that in \cite{KunzeMuiPloss2026}, the boundary conditions are nonlocal; similarly in \cref{eq:HeatEq} one could include off-diagonal entries in the operator
  \begin{equation*}
    \partial_{t}\begin{pmatrix} 0 & 0 & 0 \\ 0 & 1 & 0 \\ 0 & 0 & m \end{pmatrix}
    + \begin{pmatrix} a^{-1} & 0 & 0 \\ 0 & n_{11} & n_{12} \\ 0 & n_{21} & n_{22} \end{pmatrix}.
  \end{equation*}
\end{remark}

The wave equation can be formulated using either of the formulations \eqref{it:DivGradVersion1}, \eqref{it:DivGradVersion2}, \eqref{it:LaplaceVersion1} or \eqref{it:LaplaceVersion2}, where the prior two correspond to the Lagrange formulation, the latter to the Dirac formulation of the problem.

\begin{example}[wave equation]
  \label{ex:Wave}
  In formalism \eqref{it:LaplaceVersion1}, the wave equation reads
  \begin{equation}
    \label{eq:WaveComp}
    \left[\!\partial_{t}\!\!\begin{pmatrix} 1 & 0 & 0 \\ 0 & 1 & 0 \\ 0 & 0 & m \end{pmatrix}
    \!\!+ \!\!\begin{pmatrix} 0 & 0 & 0 \\ 0 & 0 & 0 \\ 0 & 0 & n \end{pmatrix}
    \!\!- \!\!\begin{pmatrix} 0 & - \!\begin{pmatrix} \operatorname{div}T \operatorname{grad} \\ \gamma_{n} \end{pmatrix}^{\!\!\!\ast}\\
    \!\!\begin{pmatrix} \operatorname{div}T \operatorname{grad} \\ \gamma_{n} \end{pmatrix} & 0
    \end{pmatrix}\!\!\right]
    \!\!\!\begin{pmatrix} \!u\! \\ \!v\! \\ \!w\! \end{pmatrix}
    \!\!= \!\!\begin{pmatrix} \!f\! \\ \!g\! \\ \!h\! \end{pmatrix}\!\!,
  \end{equation}
  where again $m,n \in \mathcal{L}\big(\leb^2_{\rho}(\mathbb{R};\leb^2(\partial\Omega))\big)$.
  The full system of equations contained in \eqref{eq:WaveComp} reads
  \begin{align*}
    \partial_{t}u - v &= f,\\
    \partial_{t}v - \operatorname{div}T \operatorname{grad}u &= g,\\
    \partial_{t}mw + nw - \gamma_{n}u &= h,\\
    w + \gamma_{0}v &= 0,
  \end{align*}
  where the last equation corresponds to the boundary condition encoded in the operator $A$. Combining the last two equations results in the boundary condition
  \begin{equation}
    \label{eq:BoundaryWave}
    \partial_{t}m\gamma_{0}v + n\gamma_{0}v + \gamma_{n}u = -h,
  \end{equation}
  which corresponds to a Robin boundary condition for the choice $m=0$. We compare our model to the subject of \cite{NicaisePignottiValein2011}, i.e.\ the problem
  \begin{equation}
    \label{eq:WavePignotti}
    \left\{
      \begin{aligned}
        u_{tt}(x, t) - \Delta u(x, t) &= 0 &(x,t) \!\in \!\Omega \!\times \!(0,\infty),\\
        u(x, t) &= 0 &(x,t) \!\in \!\Gamma_{D} \!\times \!(0, \infty),\\
        \tfrac{\partial u}{\partial \nu}(x, t) &= -\mu_{1}u_{t}(x, t) \!- \!\mu_{2}u_{t}\big(x, t \!- \!\tau(t)\big) &(x,t) \!\in \!\Gamma_{N}\!\times \!(0,\infty),\\
        u(x, 0) &= u_{0}(x) & x\!\in \!\Omega,\\
        u_{t}(x, 0) &= u_{1}(x) & x \!\in \!\Omega,\\
        u_{t}(x, t -\tau(0)) &= f_{0}\big(x, t \!- \!\tau(0)\big) &(x,t) \!\in \!\Gamma_{N} \!\times \!\big(0,\tau(0)\big).
      \end{aligned}
    \right.
  \end{equation}
  \Cref{eq:WavePignotti} was studied for $\Omega\subseteq \mathbb{R}^{d}$ of class $\mathcal{C}^{2}$ with split boundary $\partial\Omega = \Gamma_{D}\dot\cup \Gamma_{N}$ and $\Gamma_{N} \neq \emptyset \neq \Gamma_{D}$. There, it was assumed that
  \begin{itemize}[leftmargin = 4ex]
    \item $\tau \in \sobw^{2,\infty}_{\mathrm{loc}}([0,\infty))$,
    \item $\tau' \leq d < 1$ for some $d \in \mathbb{R}$ and
    \item $\mu_{1},\mu_{2} \in [0,\infty)$ satisfy $\mu_{2} \leq \mu_{1}\sqrt{1-d}$.
  \end{itemize}
  Then two cases were studied in \cite{NicaisePignottiValein2011}:
  \begin{enumerate}[label = (\roman*), leftmargin = 4ex]
    \item $0 < \tau_{0}\leq \tau$ or
    \item $\tau \in W^{3,\infty}_{\mathrm{loc}}([0,\infty))$ and $-\tfrac{d}{1-d}\leq \tau' \leq d <1$.
  \end{enumerate}
  We generalize this scenario immediately to the situation discussed in \cref{subsubsec:Wave1}, i.e.\ $\Omega$ being a bounded, open Lipschitz domain; the Laplacian swapped with an elliptic divergence form operator $\operatorname{div}T \operatorname{grad}$ and $\mu_{1}$ and $\mu_{2}$ replaced with $b_{1},b_{2} \in \leb^{\infty}(\mathbb{R})$ respectively. For the delay functional $\tau \colon \mathbb{R}\to [0,h]$ we require \cref{ass:Tau}. The choices $m = 0$ and $n = (b_{1} + b_{2}S_{\tau})$ in \cref{eq:BoundaryWave}, where $S_{\tau}$ is the shift operator, cf.~\cref{def:Shift}, lead to a description of the wave equation with delayed Robin boundary condition
  \begin{equation*}
    \gamma_{n}u(t) + b_{1}(t)\gamma_{0}u_{t}(t) + b_{2}(t)\gamma_{0}u_{t}(t - \tau(t)) = -h
  \end{equation*}
  in the form of \cref{eq:WaveComp}. Verbatim as in \cref{subsubsec:Wave1} one can arrive at the coercivity estimates
  \begin{equation}
    \label{eq:Coercivity1}
    \Re \dualprod[\big]{(b_{1}+b_{2}S_{\tau})\varphi}{\varphi}_{\leb^2_{\rho}(\mathbb{R};\leb^2(\partial\Omega))}
    \geq \big(\inf b_{1} - \norm{b_{2}}_{\infty}\tfrac{\e^{-\rho \tau_{0}}}{\sqrt{1-d}}\Big)\norm{\varphi}_{\leb^{2}_{\rho}(\mathbb{R};\leb^{2}(\partial\Omega))}^{2},
  \end{equation}
  \begin{multline}
    \label{eq:Coercivity2}
    \;\Re \dualprod[\big]{(b_{1} + b_{2}S_{\tau})\varphi}{\varphi}_{\sobh^{1}_{\rho}(\mathbb{R};\leb^2(\partial\Omega))}\\
    \geq \big(\inf b_{1} \!- \!\norm{b_{2}(1 \!- \!\tau')}_{\infty} \tfrac{\e^{-\rho \tau_{0}}}{\sqrt{1-d}} - \tfrac{1}{\rho}\norm{b_{1}'}_{\infty} \!- \!\tfrac{1}{\rho}\norm{b_{2}'}_{\infty}\tfrac{\e^{-\rho \tau_{0}}}{\sqrt{1-d}}\big)\norm{\varphi'}_{\leb^{2}_{\rho}(\mathbb{R};\leb^{2}(\partial\Omega))}^{2},
  \end{multline}
  where $0 \leq \tau_{0} \coloneq \inf \tau$. For the situation considered in \cite{NicaisePignottiValein2011}, \cref{eq:Coercivity1} already suffices, since $h = 0$ there. We stress that for well-posedness in the model \eqref{eq:WaveComp}, for the case
  \begin{enumerate}[label = (\roman*), leftmargin = 4ex]
    \item $\tau_{0} > 0$, the condition $\inf b_{1} > 0$ (corresponding to $\mu_{1}>0$ in \cite{NicaisePignottiValein2011}) suffices.
    \item $\tau_{0} = 0$, we require
          \begin{equation*}
            \inf b_{1} > \tfrac{\norm{b_{2}}_{\infty}}{\sqrt{1-d}},
          \end{equation*}
          which corresponds to the condition $\mu_{2}< \mu_{1}\sqrt{1-d}$.
  \end{enumerate}
  For well-posedness in $\sobh^{1}_{\rho}$ and to cover the case of state-dependently delayed $h$, we additionally require $b_{1},b_{2} \in \sobw^{1,\infty}$; \cref{eq:Coercivity2} then holds for any $b_{1}$ with $\inf b_{1}>0$ for large enough $\rho$. Well-posedness of problem \eqref{eq:WaveComp} is assured under the assumptions of \cref{th:localExistIns} for suitable initial conditions.\\
  In \cite{NicaisePignottiValein2011}, the delayed boundary condition is only in effect on a part $\Gamma_{N}\subseteq \partial\Omega$ of the boundary. It is however only a minor technical nuisance to accommodate that situation into the model \eqref{eq:WaveComp}. One simply needs to suitably restrict operators and spaces involved to contain the stipulated Dirichlet boundary condition on $\Gamma_{D}$.\\
  We further remark that this setup generalizes the findings of \cite{SilgaBayili2021,SilgaBayiliZabsonre2022}, because there the coefficients $b_{1},b_{2}$ need to be positive multiplication operators. Our findings however are not more general than the more general scenario considered in \cite{NicaisePignottiValein2011}, where $b_{1}$ and $b_{2}$ are nonlinear operators.
\end{example}

\begin{remark}[Wentzell--Robin boundary conditions]
  The choice $m = 1$ in \cref{eq:BoundaryWave} allows the substitution of the differential equation $\partial_{t}v = \operatorname{div}T \operatorname{grad} u + g$ into the boundary condition for strong solutions and with the choice $g = 0$ yields
  \begin{equation*}
    \gamma_{0}\operatorname{div}T \operatorname{grad}u + n \gamma_{0}u_{t} + \gamma_{\nu} T \operatorname{grad}u = -h.
  \end{equation*}
  The case $n = 0$ then corresponds to Wentzell--Robin boundary conditions, cf.~\cite{Goldstein2006}.
\end{remark}
The formulations of the wave equation making use of formalisms \eqref{it:DivGradVersion1}, \eqref{it:DivGradVersion2} and \eqref{it:LaplaceVersion2} are cursorily discussed in \cref{app:Applications}.

\begin{example}[Maxwell equations]
  Maxwell's equations can be posed as a system using formulation \eqref{it:MaxwellVersion1} or \eqref{it:MaxwellVersion2}. Choosing the former, we obtain
  \begin{equation*}
    \left[\partial_{t}
    \begin{pmatrix} \epsilon & 0 & 0 \\ 0 & \mu & 0 \\ 0 & 0 & m \end{pmatrix}
    + \begin{pmatrix} \sigma & 0 & 0 \\ 0 & 0 & 0 \\ 0 & 0 & n \end{pmatrix}
    + \begin{pmatrix} 0 & - \begin{pmatrix} -\operatorname{curl} \\ \gamma_{\tau} \end{pmatrix}^{\!\!\ast}\\
    \begin{pmatrix} -\operatorname{curl} \\ \gamma_{\tau} \end{pmatrix} & 0
    \end{pmatrix}
    \right]
    \begin{pmatrix} u \\ v \\ w \end{pmatrix}
    = \begin{pmatrix} f \\ g \\ h \end{pmatrix},
  \end{equation*}
  where again $m,n \in \mathcal{L}\big(\leb^{2}_{\rho}(\mathbb{R};\leb^{2}_{\tau}(\partial\Omega))\big)$. This system describes Maxwell's equations with the boundary condition
  \begin{equation*}
    \partial_{t}mw + nw + \gamma_{\tau}u = h
  \end{equation*}
  as part of the system and the boundary condition
  \begin{equation*}
    w + \pi_{\tau}v = 0.
  \end{equation*}
  encoded in the spatial operator $A$.  Togehter, we obtain the boundary condition
  \begin{equation*}
    -\partial_{t}m \pi_{\tau}v - n\pi_{\tau}v + \gamma_{\tau}u = h.
  \end{equation*}
  For $m = 0$ this corresponds to classic Leontovich/impedance boundary conditions.\\
  Setup \eqref{it:MaxwellVersion2} similarly leads to the treatment of Leontovich boundary conditions as already described in \cite{PicardSeidlerTrostorffWaurick2016}, but in that instance with an operator $n$ acting on the twisted tangential trace $\gamma_{\tau}$ instead of the tangential trace $\pi_{\tau}$.
\end{example}

\section{Conclusion}

Well-posedness results addressing boundary value problems with state-dependent delay are few and far between. The majority of structural investigations into boundary value problems specifically seems to have been made in systems theory in the context of boundary control problems, e.g.\ \cite{Gantouh2026-1,Gantouh2026-2}. There, one studies an abstract Cauchy problem
\begin{equation*}
  \partial_{t}z(t) = Az(t) + BL(z_{(t)})
\end{equation*}
with a suitable initial condition. $A$ generates a strongly continuous semigroup on an extended state space, $B$ is the (boundary) control operator and $L$ a possibly nonlinear function receiving the history $z_{(t)}$ as input. Usually, $L$ is at least Lipschitz-continuous, but is often assumed to be a linear operator in the form of an integral over a measure of bounded variation. This is in some sense reminiscent of the theory developed in \cite{Batkai2005}, where delay equations were treated via perturbation theory for semigroups. This is also the idea in \cite{Diekmann1995}, where the more technically challenging $\odot$-calculus for semigroups is employed, see also \cite{Spek2020} for an application. All of these approaches have in common that they cannot accommodate state-dependent delay, which in its typical form appears as a delayed evaluation of the form
\begin{equation*}
  F(t,u_{(t)}) = u\big(t - \tau(u(t))\big).
\end{equation*}
The only attempts at structural solution theory for delay differential equations addressing state-dependent delay seem to stem from Hern\'{a}ndez, e.g.\ \cite{Hernandez2006}, and Rezounenko, e.g.\ \cite{Rezounenko2007}, as well as their coauthors. Neither of their approaches relate to delayed boundary value problems. In addition, their techniques are curated to take advantage of regularity results for the spatial operator $A$, which is usually the Dirichlet--Laplacian. Summarily, little general well-posedness theory seems to exist for boundary value problems with (state-dependent) delay, as the majority of the literature on delay differential equations is concerned with questions regarding the dynamic of solutions (e.g.\ stability, existence of attractors, bifurcation) of problems for which well-posedness is not an issue. For that reason, the articles the newly obtained results here were compared against do not relate to state-dependent delay at all, but to boundary value problems for the heat and wave equations with nonautonomous delay. These comparisons demonstrate the strength of the nonautonomous solution theory, which allows to facilitate the problems related to the heat or wave equation there in a systematic framework.\\
In the context of evolutionary equations, delayed partial differential equations were already considered in \cite{Trostorff2015}, but in the context of constant delay. Well-posedness theory for evolutionary equations with state-dependent delay was developed in \cite{AignerWaurick2026} and extended to include boundary value problems in this publication. The proposed techniques are highly applicable and general in scope. The approach presented here provides plenty opportunity for future topical research on specific applications, spelling out the precise assumptions on delay functionals, forcing terms and prehistories at hand.

\section*{Acknowledgements}
I, as the author of this article, received funding in form of a graduate student stipend of the federated German state Saxony.\\
I sincerely thank my supervisor Marcus Waurick for the many helpful suggestions during the genesis of this article and for pointing out the concept of abstract $\operatorname{div}$-$\operatorname{grad}$ systems. Additionally, I thank Nathanael Skrepek for sparking my interest in boundary value problems in the first place and the many helpful discussions during my visit in Enschede.\\
No data was used for this research.

\appendix
\section{Well-posedness for nonautonomous material laws}
\label{app:WellPosedness}

Here, full details for the proofs in \cref{subsec:WellPosedness} are provided. To that end, we recall two critical facts: The first one is the key ingredient for the solution theory in \cite{AignerWaurick2026,Waurick2023} and takes the form of a Lipschitz-estimate for the delay operator.
\begin{lemma}[{\cite[thm.~2.2]{Waurick2023}, \cite[lem.~2.1]{Aigner2024}}]
  \label{th:ThetaIsLipschitz}
  For $0<T\leq \infty$ and $\rho>0$ let
  \begin{equation*}
    \Theta\colon\,\leb^2_{\rho}(-h,T;H)\rightarrow \leb^2_{\rho}\bigl(0,T;\leb^2(-h,0;H)\bigr)\text{,}\quad
    u\mapsto (t\mapsto u_{(t)})\text{.}
  \end{equation*}
  Then $\norm{\Theta}\leq \frac{1}{\sqrt{2\rho}}$.
\end{lemma}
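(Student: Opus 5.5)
The plan is a direct computation of $\norm{\Theta u}^{2}$ using Fubini--Tonelli and a change of variables, followed by a crude bound on the resulting exponential weight. I would first take $u$ in a dense subclass of $\leb^2_{\rho}(-h,T;H)$, for instance continuous functions with compact support in $(-h,T)$ (or $[-h,T]$ if $T<\infty$). For these, $t\mapsto u_{(t)}$ is continuous into $\leb^2(-h,0;H)$, so $\Theta u$ is strongly measurable, and the integrand $(t,s)\mapsto\norm{u(t+s)}_{H}^{2}$ is jointly measurable. Once the bound is established on this class, $\Theta$ extends uniquely by continuity. The extension agrees with the pointwise definition $t\mapsto u_{(t)}$: along an $\leb^2_{\rho}$-convergent sequence, a subsequence of the histories converges for almost every $t$, by the same Tonelli computation applied to differences.

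For such $u$ I compute
\begin{equation*}
  \norm{\Theta u}^{2} = \medint\int_{0}^{T}\medint\int_{-h}^{0}\norm{u(t+s)}_{H}^{2}\dx[s]\,\e^{-2\rho t}\dx[t].
\end{equation*}
In the inner integral I substitute $r=t+s$, and then I interchange the order of integration by Tonelli, since the integrand is nonnegative. This gives
\begin{equation*}
  \norm{\Theta u}^{2} = \medint\int_{-h}^{T}\norm{u(r)}_{H}^{2}\Bigl(\medint\int_{\max\{0,r\}}^{\min\{T,r+h\}}\e^{-2\rho t}\dx[t]\Bigr)\dx[r].
\end{equation*}
The key step is to bound the inner integral. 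The integration range lies in $[r,\infty)$ and the integrand is positive, so the inner integral is at most
\begin{equation*}
  \medint\int_{r}^{\infty}\e^{-2\rho t}\dx[t]=\tfrac{1}{2\rho}\e^{-2\rho r}.
\end{equation*}
Here $\rho>0$ is used, which guarantees that this tail integral is finite.

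Inserting this bound yields $\norm{\Theta u}^{2}\leq\tfrac{1}{2\rho}\norm{u}^{2}_{\leb^2_{\rho}(-h,T;H)}$, which is the claim. The only mildly delicate point is the measurability and extension bookkeeping in the first step. The estimate itself is elementary, and the case $T=\infty$ needs no separate treatment.
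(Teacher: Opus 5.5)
Your proof is correct and is the standard Fubini--Tonelli computation behind the cited results. The paper itself gives no proof beyond the references. Since $\max\{0,r\}\geq r$, the inner integral is indeed at most $\int_{r}^{\infty}\e^{-2\rho t}\dx[t]=\tfrac{1}{2\rho}\e^{-2\rho r}$, and keeping the upper limit $r+h$ would even give the slightly sharper bound $\tfrac{1-\e^{-2\rho h}}{2\rho}$ for the squared norm.

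Your density and extension step is superfluous: for every $u\in\leb^2_{\rho}(-h,T;H)$, the map $t\mapsto u_{(t)}$ is already continuous into $\leb^2(-h,0;H)$ by continuity of translations, so the Tonelli computation applies to $u$ directly.
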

The second fact is an integrability condition for the right-hand side $F$ from \cref{eq:EvolEqSDD}, mandated by the fact that evolutionary equations are posed on the entire real line $\mathbb{R}$. Here, as in \cref{subsec:WellPosedness}, $\pi_{\alpha}\colon \sobh^{1}(-h,0;H)\to V_{\alpha}$ denotes the metric projection onto the subset $V_{\alpha}$ from \cref{def:AlmLC}.
\begin{lemma}[{\cite[lem.~3.4]{AignerWaurick2026}}]
  \label{th:integrability}
  Let $\rho > 0$, $v\in \sobh^{1}_{\rho}(-h,\infty;H)$ and let $F \colon [0,\infty) \!\times\! \leb^2(-h,0;H) \to H$ be almost uniformly Lipschitz-continuous. Suppose $F(\argdot,0)\in \leb^2_{\rho}(0,\infty;H)$. Then for all $\alpha>0$, $F\bigl(\argdot, \pi_{\alpha}(v_{(\argdot)})\bigr)\in \leb^2_{\rho}(0,\infty;H)$.
\end{lemma}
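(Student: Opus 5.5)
The plan is to reduce to the case $\alpha$ large and then handle general $\alpha$ by a standard embedding argument. Fix $\alpha>0$, write $w\coloneq\pi_{\alpha}\circ\Theta v$ pointwise in $t$, and split
\begin{equation*}
  F\bigl(t,\pi_{\alpha}(v_{(t)})\bigr) = \Bigl[F\bigl(t,\pi_{\alpha}(v_{(t)})\bigr) - F(t,0)\Bigr] + F(t,0).
\end{equation*}
The second summand lies in $\leb^2_{\rho}(0,\infty;H)$ by hypothesis, so it remains to treat the difference.

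For the difference I would use almost uniform Lipschitz-continuity on $V_{\alpha}$. Since $0\in V_{\alpha}$ and $\pi_{\alpha}(v_{(t)})\in V_{\alpha}$, \cref{def:AlmLC} gives
\begin{equation*}
  \norm[\big]{F\bigl(t,\pi_{\alpha}(v_{(t)})\bigr)-F(t,0)}_{H}\leq L_{\alpha}\norm{\pi_{\alpha}(v_{(t)})}_{\sobh^{1}(-h,0;H)}.
\end{equation*}
Because $\pi_{\alpha}$ is the metric projection onto a closed convex set containing $0$ in the Hilbert space $\sobh^{1}(-h,0;H)$, it is $1$-Lipschitz and fixes $0$. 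Hence $\norm{\pi_{\alpha}(v_{(t)})}_{\sobh^{1}}\leq \norm{v_{(t)}}_{\sobh^{1}(-h,0;H)}$, and
\begin{equation*}
  \norm{v_{(t)}}_{\sobh^{1}}^{2}=\norm{v_{(t)}}_{\leb^{2}}^{2}+\norm{(v')_{(t)}}_{\leb^{2}}^{2}.
\end{equation*}
Squaring, weighting by $\e^{-2\rho t}$ and integrating over $(0,\infty)$, I apply \cref{th:ThetaIsLipschitz} separately to $v$ and to $v'$; both lie in $\leb^2_{\rho}(-h,\infty;H)$ because $v\in\sobh^{1}_{\rho}(-h,\infty;H)$. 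This yields
\begin{equation*}
  \norm[\big]{F(\argdot,\pi_{\alpha}(v_{(\argdot)}))-F(\argdot,0)}_{\leb^2_{\rho}}\leq \tfrac{L_{\alpha}}{\sqrt{2\rho}}\bigl(\norm{v}_{\leb^2_{\rho}}^{2}+\norm{v'}_{\leb^2_{\rho}}^{2}\bigr)^{1/2}<\infty.
\end{equation*}

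Measurability of $t\mapsto F(t,\pi_{\alpha}(v_{(t)}))$ must also be checked. The map $t\mapsto v_{(t)}\in\sobh^{1}(-h,0;H)$ is continuous, by continuity of translation in $\leb^2$ applied to $v$ and $v'$ on bounded intervals. Composing with the continuous maps $\pi_{\alpha}$ and $F$ gives continuity, hence strong measurability. Here $F$ is continuous on $[0,\infty)\times\leb^2$, and $\sobh^{1}\hookrightarrow\leb^2$ is continuous.

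I expect the main subtlety to be ensuring that $\pi_{\alpha}$ is taken in the $\sobh^{1}$-metric, so that it is nonexpansive there. The estimate for the difference term is then straightforward.
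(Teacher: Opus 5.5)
Your argument is correct and follows the standard route for this lemma. You compare with $F(\cdot,0)$, apply the Lipschitz bound on $V_{\alpha}$ using $0\in V_{\alpha}$ and the nonexpansiveness of the $\sobh^{1}$-metric projection (so $\pi_{\alpha}(0)=0$), and conclude with $\norm{\Theta}\leq \nicefrac{1}{\sqrt{2\rho}}$ applied to $v$ and $v'$, the same ingredients the paper combines in the Lipschitz estimates of \cref{app:WellPosedness}; the paper itself only cites the lemma. Your opening sentence about reducing to large $\alpha$ via an embedding argument is never used and can be dropped, since the estimate works directly for every fixed $\alpha>0$.
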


We now want to show the existence of a fixed point for the map
\begin{align}
  \begin{aligned}
    \label{eq:FPPproj}
    \Gamma_{\rho,\alpha}\colon \, \leb^2_{\rho}(0,\infty;H)&\rightarrow \leb^2_{\rho}(0,\infty;H)\text{,}\\
    v &\mapsto S_{\rho}f + S_{\rho}F\bigl(\argdot,\pi_{\alpha}((v+Z)_{(\argdot)})\bigr),
  \end{aligned}
\end{align}
where we assume that $F$ satisfies $F(\argdot,0)\in \leb^2_{\rho}(0,\infty;H)$ and the function on the right is therefore in $\leb^2_{\rho}(0,\infty;H)$ by \cref{th:integrability}. We now provide full proofs for both cases considered in \cref{subsec:WellPosedness} and make use of the following notation:
\begin{equation*}
  \sobh^{1}_{0,\rho}(0,\infty;H)\coloneq \dset{w \in \sobh^{1}_{\rho}(0,\infty;H)}{w(0)=0}.
\end{equation*}

\subsubsection*{The case $\widetilde{F}(v)=I_{\rho}\widetilde{G}(v)$}
\begin{theorem}[local existence and uniqueness for \cref{eq:FPPIntOut}]
  \label{th:localExistOutApp}
  Let $G$ satisfy assumption \ref{ass:A} and let $\Phi$ and $Z$ satisfy assumption \ref{ass:B}. Let $A$, $M$, $M'$ and $N$ satisfy the assumptions of \cref{th:Picard} and \cref{th:PicardHigh} for $k = 1$ and $k=2$. Let $f \in \sobh_{\tilde{\rho}}^{2}(0,\infty;H)\cap \sobh^{1}_{0,\tilde{\rho}}(0,\infty;H)$ for some $\tilde{\rho} >0$ and $G(0,\Phi) = - f'(0)$. Then \cref{eq:FPPIntOut} has a unique local solution $u\in \sobh^{2}(0,T;H)$.
\end{theorem}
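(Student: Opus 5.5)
The plan is to run the contraction mapping principle for the projected map
\[
\Gamma_{\rho,\alpha}(v)=S_{\rho}\bigl[f+I_{\rho}\widetilde G_{\alpha}(v)\bigr],\qquad \widetilde G_{\alpha}(v)(t)=G\bigl(t,\pi_{\alpha}((v+Z)_{(t)})\bigr),
\]
on the closed subspace $\sobh^{1}_{0,\rho}(0,\infty;H)$. Its norm is $\norm{\partial_{t,\rho}\argdot}_{\leb^2_\rho}$, extended by zero to $\mathbb{R}$.

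\textbf{Parameters.} Fix $\alpha>\norm{\Phi'}_\infty$ and take $\rho>\max\{\tilde\rho, L_\alpha^2/(2c^2)\}$. Here $c$ is the minimum of the coercivity constants from \cref{th:Picard} and from \cref{th:PicardHigh} for $k=1,2$.

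\textbf{Well-definedness.} By \cref{th:integrability}, $\widetilde G_\alpha(v)\in\leb^2_\rho(0,\infty;H)$, so $I_\rho\widetilde G_\alpha(v)\in\sobh^1_\rho$. Since $f\in\sobh^1_{0,\rho}$ and both terms vanish on $(-\infty,0]$, the right-hand side lies in $\sobh^1_\rho(\mathbb{R};H)$ and is supported in $[0,\infty)$. Well-posedness in $\sobh^1_\rho$ (\cref{th:PicardHigh}, $k=1$) together with causality then gives $\Gamma_{\rho,\alpha}(v)\in\sobh^1_\rho(\mathbb{R};H)$ with support in $[0,\infty)$. Continuity forces the value $0$ at $t=0$.

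\textbf{Contraction.} Using $\norm{S_\rho}_{\mathcal L(\sobh^1_\rho)}\le 1/c$ and $\partial_t I_\rho=1$,
\[
\norm{\Gamma_{\rho,\alpha}(v)-\Gamma_{\rho,\alpha}(w)}_{\sobh^1_\rho}\le \tfrac1c\norm{\widetilde G_\alpha(v)-\widetilde G_\alpha(w)}_{\leb^2_\rho}.
\]
The $V_\alpha$-Lipschitz bound of $G$ and the $1$-Lipschitz property of the metric projection $\pi_\alpha$ in $\sobh^1(-h,0;H)$ bound this by
\[
\tfrac{L_\alpha}{c}\bigl(\norm{\Theta(v-w)}+\norm{\Theta(v'-w')}\bigr).
\]
\cref{th:ThetaIsLipschitz} controls $\Theta$ by $1/\sqrt{2\rho}$, and $\norm{v}_{\leb^2_\rho}\le\rho^{-1}\norm{v'}_{\leb^2_\rho}$. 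This yields a Lipschitz constant of order $\frac{L_\alpha}{c\sqrt{2\rho}}(1+\rho^{-1})$, which is $<1$ for $\rho$ large; enlarge $\rho$ further if the $1/\rho$ correction requires it. Banach's fixed point theorem then yields a unique fixed point $w\in\sobh^1_{0,\rho}$.

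\textbf{Removing the projection locally.} This is the main obstacle: we need $w'$ continuous with $w'(0)=0$.
\begin{itemize}[leftmargin=4ex]
\item Since $w+Z$ is $\sobh^1$ with bounded derivative on finite intervals (truncated by $\pi_\alpha$ anyway), regularity preservation gives $\widetilde G_\alpha(w)\in\sobh^1_\rho$, hence $I_\rho\widetilde G_\alpha(w)\in\sobh^2_\rho$.
\item Together with $f\in\sobh^2_\rho$, \cref{th:PicardHigh} with $k=2$ gives $w\in\sobh^2_\rho(\mathbb{R};H)$. By causality the support of $w$ lies in $[0,\infty)$, so $w'$ is continuous and $w'(0)=0$.
\item The compatibility conditions $f(0)=0$ and $G(0,\Phi)=-f'(0)$ are exactly what make the right-hand side $f+I_\rho\widetilde G_\alpha(w)$ and its derivative vanish at $0$. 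Its derivative there is $f'(0)+G(0,\pi_\alpha\Phi)=f'(0)+G(0,\Phi)=0$, which is needed for it to lie in $\sobh^2_\rho(\mathbb{R};H)$ after extension by zero.
\item Now $\norm{Z'(0)}\le\norm{\Phi'}_\infty<\alpha$ and $Z'$ is continuous, so $\norm{w'+Z'}<\alpha$ on some $[0,T]$. Since $\Phi\in V_\alpha$, this gives $(w+Z)_{(t)}\in V_\alpha$ for $t\in[0,T]$, so $\pi_\alpha$ acts as the identity there and $w$ is a local solution in $\sobh^2(0,T;H)$.
\end{itemize}

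\textbf{Uniqueness.} Take two local solutions on $[0,T]$ and choose $\alpha$ larger than both derivative bounds, so both are fixed points of the same $\Gamma_{\rho,\alpha}$ up to time $T$. Restrict to $(-\infty,T]$ using causality of $S_\rho$, $I_\rho$ and $\Theta$, i.e.\ multiply by $\chi_{(-\infty,T]}$ in the contraction estimate. The same contraction inequality for large $\rho$ then forces the two solutions to coincide on $[0,T]$.
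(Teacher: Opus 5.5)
Your proposal follows the paper's proof essentially step by step. The contraction of $\Gamma_{\rho,\alpha}$ on $\sobh^{1}_{0,\rho}(0,\infty;H)$ via $\norm{S_{\rho}}_{\mathcal{L}(\sobh^{1}_{\rho})}\leq \nicefrac{1}{c}$ and \cref{th:ThetaIsLipschitz} is the same, as are the removal of $\pi_{\alpha}$ through $\sobh^{2}_{\rho}$-well-posedness, causality and the compatibility conditions, and the continuity argument for $w'+Z'$; you only treat the $\leb^{2}$-part of the history norm slightly more carefully via $\norm{v}_{\leb^{2}_{\rho}}\leq \rho^{-1}\norm{v'}_{\leb^{2}_{\rho}}$, and your uniqueness step (a common $\alpha$ and a contraction estimate truncated at $T$ via causality) is a more explicit version of the paper's appeal to the contraction mapping principle and eventual independence of $\rho$.
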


\begin{proof}
  Let $\alpha > \norm{\Phi'}_{\infty}$ and $\rho > \max\bigl\{\tfrac{L_{\alpha}^{2}}{2 c^{2}},\tilde{\rho}\bigr\}$, where $L_{\alpha}$ is the Lipschitz-constant of $G$ associated to the parameter $\alpha$, cf.~\cref{def:AlmLC}, and $c$ is the minimum of the coercivity bounds from \cref{th:PicardHigh} (for $k = 1$ and $k = 2$) and \cref{th:Picard}. We first show the existence of a fixed point of
  \begin{multline*}
    v = S_{\rho}\bigl[f + I_{\rho}G\bigl(\argdot,\pi_{\alpha}((v+Z)_{(\argdot)})\bigr)\bigr] = S_{\rho}\bigl[f + I_{\rho}\widetilde{G}_{\alpha}(v)\bigr]\eqcolon \Gamma_{\rho,\alpha}(v)\text{,}\\
    v\in \sobh^{1}_{0,\rho}(0,\infty;H)\text{,}
  \end{multline*}
  by showing that $\Gamma_{\rho,\alpha}$ satisfies the requirements of the contraction mapping principle:
\begin{itemize}[leftmargin=3ex]
  \item $\Gamma_{\rho,\alpha}$ is a self-mapping:\\
        Since $\pi_{\alpha}$ maps into $V_{\alpha}$ and $G$ satisfies assumption \ref{ass:A} iii), \cref{th:integrability} ensures that $\widetilde{G}_{\alpha}(v)$ defines an element of $\leb^2_{\rho}(\mathbb{R};H)$. As $I_{\rho}\colon \leb^2_{\rho}\to \sobh^{1}_{\rho}$, we have $I_{\rho}\widetilde{G}_{\alpha}(v) \in \sobh^{1}_{\rho}(\mathbb{R};H)$. Since $f\in \sobh^{1}_{0,\rho}(\mathbb{R};H)$, $t\mapsto \bigl[f + I_{\rho}\widetilde{G}_{\alpha}(v)\bigr]$ defines an element of $\sobh^{1}_{0,\rho}(0,\infty;H)$. Because of \cref{th:PicardHigh} for $k=1$, the expression $\Gamma_{\rho,\alpha}(v)$ defines an element of $\sobh^{1}_{\rho}(\mathbb{R};H)$. Appealing to causality of $S_{\rho}$ we infer $\Gamma_{\rho,\alpha}(v)\in \sobh^{1}_{0,\rho}(0,\infty;H)$.
  \item $\Gamma_{\rho,\alpha}$ is Lipschitz-continuous:\\
        We calculate for $v,w \in \sobh^{1}_{0,\rho}(0,\infty;H)$:
        \begin{align*}
          &\norm{\Gamma_{\rho,\alpha}v-\Gamma_{\rho,\alpha}w}_{\sobh^{1}_{\rho}(\mathbb{R};H)}^{2}\\
          &\quad= \norm{S_{\rho}I_{\rho}\widetilde{G}_{\alpha}(v) - S_{\rho}I_{\rho}\widetilde{G}_{\alpha}(w)}_{\sobh^{1}_{\rho}}^{2}\\
          \intertext{Using well-posedness in $\sobh^{1}_{\rho}$ appealing to \cref{th:PicardHigh} we estimate}
          &\quad\leq \norm{S_{\rho}}^{2}\norm{I_{\rho}\widetilde{G}_{\alpha}(v) - I_{\rho}\widetilde{G}_{\alpha}(w)}_{\sobh^{1}_{\rho}}^{2}\\
          &\quad\leq \tfrac{1}{c^{2}}\norm{\partial_{t}I_{\rho}\widetilde{G}_{\alpha}(v) - \partial_{t}I_{\rho}\widetilde{G}_{\alpha}(w)}_{\leb^2_{\rho}}^{2}\\
          \intertext{Using almost uniform Lipschitz-continuity of $G$ (note that $\alpha > \norm{\Phi'}_{\infty}$) and the Lipschitz-continuity of the projection $\pi_{\alpha}$, we can estimate further}
          &\quad\leq \tfrac{1}{c^{2}}\medint\int_0^\infty \norm[\big]{G\bigl(t,\pi_{\alpha}((v+Z)_{(t)})\bigr)
            -G\bigl(t,\pi_{\alpha}((w+Z)_{(t)})\bigr)}_{H}^{2}\e^{-2\rho t}\dx[t]\\
          &\quad\leq \tfrac{L_{\alpha}^{2}}{c^{2}}\medint\int_0^\infty \norm[\big]{\pi_{\alpha}\bigl((v+Z)_{(t)}\bigr)
            -\pi_{\alpha}\bigl((w+Z)_{(t)}\bigr)}_{\sobh^{1}(-h,0;H)}^{2}\e^{-2\rho t}\dx[t]\\
          &\quad\leq \tfrac{L_{\alpha}^{2}}{c^{2}}\medint\int_{0}^{\infty}\norm[\big]{(v+Z)_{(t)}
            -(w+Z)_{(t)}}_{\sobh^{1}(-h,0;H)}^{2}\e^{-2\rho t}\dx[t]\\
          &\quad= \tfrac{L_{\alpha}^{2}}{c^{2}}\medint\int_0^\infty \norm{v_{(t)}
            -w_{(t)}}_{\sobh^{1}(-h,0;H)}^{2}\e^{-2\rho t}\dx[t]\\
          &\quad=\tfrac{L_{\alpha}^{2}}{c^{2}}\norm{\Theta v
            - \Theta w}_{\leb^2_{\rho}(0,\infty;\sobh^{1}(-h,0;H))}^{2}\\
          \intertext{An application of the Lipschitz-continuity of the delay operator $\Theta$ in the form of \cref{th:ThetaIsLipschitz} yields the desired result:}
          &\quad\leq \tfrac{L_{\alpha}^{2}}{c^{2}} \tfrac{1}{2\rho}
            \norm{v-w}_{\sobh_{\rho}^{1}(-h,\infty;H)}^{2}\text{.}
        \end{align*}
  \item $\Gamma_{\rho,\alpha}$ is a contraction on $\sobh^{1}_{0,\rho}(0,\infty;H)$:\\
        This follows from the last estimate and the assumption $\rho > \tfrac{L_{\alpha}^{2}}{2 c^{2}}$.
\end{itemize}
The contraction mapping principle hence assures a solution $w$ of
\begin{equation*}
  w = \Gamma_{\rho,\alpha}(w)\text{,} \quad w\in \sobh^{1}_{0,\rho}(0,\infty;H)\text{.}
\end{equation*}
To obtain a local solution of \cref{eq:FPPIntOut}, we need to argue that at least up to some $T>0$, we have $\pi_{\alpha}\bigl((w + Z)_{(t)}\bigr) = (w + Z)_{(t)}$ for $0 \leq t \leq T$.\\
First recall that $f\in \sobh^{2}_{\rho}(0,\infty;H)$, and because $G$ is regularity preserving, $\widetilde{G}_{\alpha}(w) \in \sobh^{1}_{\rho}(0,\infty;H)$ and consequently $I_{\rho}\widetilde{G}_{\alpha}(w) \in \sobh^{2}_{\rho}(0,\infty;H)$. Since $f'(0) + G(0,\Phi) = 0$, the sum $f + I_{\rho}\widetilde{G}_{\alpha}(w) \in \sobh^{2}_{\rho}(\mathbb{R};H)$. Well-posedness in $\sobh^{2}_{\rho}(\mathbb{R};H)$ appealing to \cref{th:PicardHigh} for $k =2$ then assures that in fact $w \in \sobh^{2}_{\rho}(\mathbb{R};H)$. From causality of $S_{\rho}$ we infer that $\mathrm{spt}(w) \subseteq [0,\infty)$. Because $w \in \sobh^{2}_{\rho}(\mathbb{R};H)$; in particular this implies $w'(t) = 0$ for $t\leq 0$ and hence $w' \in \sobh^{1}_{0,\rho}(0,\infty;H)$.\\
Since $w = 0$ on $(-\infty,0]$ and $\Phi \in V_{\alpha}$, it only needs to be shown that $w' + Z'$ is bounded in $\norm{\argdot}_{\infty}$-norm by $\alpha$ up to some positive time $T>0$. Since $Z'$ is bounded on any interval $[0,c]$, $c>0$, and $\norm{Z'(0)}_{H}< \alpha$ by assumption \ref{ass:B} ii), $\norm{Z'}_{\mathcal{C}([0,\tilde{T}];H)}<\alpha$ holds up to some positive $\tilde{T}>0$. Since $w'$ is continuous and $w'(0)=0$, up to some positive $0<T\leq \tilde{T}$, $w'+Z'$ remains bounded in $\norm{\argdot}_{\infty}$-norm by $\alpha$. This establishes existence of a local solution.\\
For uniqueness, we let $v \in \leb^2_{\rho}(0,\infty;H)$ be a local solution of \cref{eq:FPPIntOut}, that is by definition, a fixed point of $\Gamma_{\hat{\rho},\alpha}$ from \eqref{eq:FPPproj} for some $\hat{\rho}>0$, i.e.,
\begin{align*}
  \begin{aligned}
    \Gamma_{\hat{\rho},\alpha}\colon \, \leb^2_{\hat{\rho}}(0,\infty;H)&\rightarrow \leb^2_{\hat{\rho}}(0,\infty;H)\text{,}\\
    v &\mapsto S_{\hat{\rho}}f + S_{\hat{\rho}}I_{\hat{\rho}}\widetilde{G}_{\alpha}(v)\text{.}
  \end{aligned}
\end{align*}
As proven at the start of the proof, a fixed point $v$ is an element of $\sobh^{1}_{0,\hat{\rho}}(0,\infty;H)$ by virtue of the shape of the right-hand side. Uniqueness of the fixed point now follows from the contraction mapping principle and eventual independence of the weight parameters $\hat{\rho}$ and $\rho$, appealing to \cref{th:Picard}.
\end{proof}

\begin{remark}[maximal existence interval]
  \label{rmk:maxExistIntOut}
  A closer inspection of the proof reveals that the unique local solution $u = v + Z$ of \cref{eq:FPPIntOut} exists up to the smallest $T>0$ at which $\lim_{t\uparrow T}\norm{u}_{\mathcal{C}([0,t];H)}=+\infty$:\\
  Indeed, in the proof of \cref{th:localExistOut}, the solution of the fixed point problem with $\pi_{\alpha}$,
  \begin{equation*}
    v = \Gamma_{\rho, \alpha}(v) = S_{\rho}\bigl[f + I_{\rho}G\bigl(\argdot, \pi_{\alpha}\bigl((v+Z)_{(\argdot)}\bigr)\bigr)\bigr]\text{,} \qquad v\in \sobh^{1}_{0,\rho}(0,\infty;H)
  \end{equation*}
  is a local solution of \cref{eq:FPPIntOut} for as long as the $\norm{\argdot}_{\infty}$-norm of $v' + Z'$ remains bounded by $\alpha$, say up to $T_{0}$. Increasing the parameter $\alpha$ to $\alpha + 1$ yields a solution $w$ of $w = \Gamma_{\rho, \alpha +1}(w)$, such that $w'+Z'$ remains bounded by $\alpha +1$ up to $T_{1}>T_{0}$ by virtue of the continuity of $w'+Z'$ (cf.\ proof of \cref{th:localExistOutApp}). We observe that the solutions $v$ (corresponding to $\alpha$) and $w$ (corresponding to $\alpha + 1$) agree up to $T_{0}$ by virtue of unique solvability. The claim follows by iteration and the fact, that $\norm{Z'}_{\mathcal{C}([0,t];H)}<\infty$ for all $t>0$.
\end{remark}

\subsubsection*{The case $\widetilde{F}(v)=\widetilde{G}(I_{\rho}v)$}
\begin{theorem}[local existence and uniqueness for \cref{eq:FPPIntIns}]
  Let $G$ satisfy assumption \ref{ass:A} and let $\Phi$ and $Z$ satisfy  assumption \ref{ass:B}. Let $M, M', N$ and $A$ satisfy the assumptions of \cref{th:Picard} and \cref{th:PicardHigh} for $k=1$. Let $f\in \sobh^{1}_{\tilde{\rho}}(0,\infty;H)$ for some $\tilde{\rho}>0$ and $f(0) = -G\bigl(0, \Phi\bigr)$. Then \cref{eq:FPPIntIns} has a unique local solution $u \in \sobh^{1}(0,T;H)$.
\end{theorem}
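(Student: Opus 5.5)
We follow the template of the proof of \cref{th:localExistOutApp}, shifted down by one order of regularity. Fix $\alpha>\norm{\Phi'}_{\infty}$ and choose $\rho>\max\{\tfrac{L_{\alpha}^{2}}{2c^{2}\rho_{\ast}},\tilde{\rho}\}$, enlarged if necessary; here $c$ is the minimum of the coercivity constants from \cref{th:Picard} and \cref{th:PicardHigh} for $k=1$, and the precise threshold is fixed by the contraction estimate below. The plan is to find a fixed point of
\begin{equation*}
  \Gamma_{\rho,\alpha}(v)\coloneq S_{\rho}\bigl[f+G\bigl(\argdot,\pi_{\alpha}((I_{\rho}v+Z)_{(\argdot)})\bigr)\bigr]=S_{\rho}\bigl[f+\widetilde{G}_{\alpha}(I_{\rho}v)\bigr]
\end{equation*}
and to work in $\leb^2_{\rho}(0,\infty;H)$ for the contraction. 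Since the argument of $G$ is $I_{\rho}v$, one derivative is gained for free, so $\sobh^{1}$ is needed only for the regularity step.

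\emph{Self-mapping.} Take $v\in\leb^2_{\rho}(0,\infty;H)$. Then $I_{\rho}v+Z\in\sobh^{1}_{\rho}(-h,\infty;H)$, so \cref{th:integrability} gives $\widetilde{G}_{\alpha}(I_{\rho}v)\in\leb^2_{\rho}(0,\infty;H)$. The function is supported on $[0,\infty)$, and so is $f$. Hence \cref{th:Picard} together with causality places $\Gamma_{\rho,\alpha}(v)$ in $\leb^2_{\rho}(0,\infty;H)$.

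\emph{Contraction.} We use $\norm{S_{\rho}}\le 1/c$, the Lipschitz bound $L_{\alpha}$ of $G$ on $V_{\alpha}$, and the fact that $\pi_{\alpha}$ is $1$-Lipschitz in $\sobh^{1}(-h,0;H)$. Applying \cref{th:ThetaIsLipschitz} to $I_{\rho}v-I_{\rho}w$ then gives
\begin{equation*}
  \norm{\Gamma_{\rho,\alpha}v-\Gamma_{\rho,\alpha}w}_{\leb^2_{\rho}}^{2}\le\tfrac{L_{\alpha}^{2}}{c^{2}}\tfrac{1}{2\rho}\norm{I_{\rho}(v-w)}_{\sobh^{1}_{\rho}}^{2}=\tfrac{L_{\alpha}^{2}}{2\rho c^{2}}\norm{v-w}_{\leb^2_{\rho}}^{2}.
\end{equation*}
Here we use $\norm{I_{\rho}u}_{\sobh^{1}_{\rho}}=\norm{u}_{\leb^2_{\rho}}$ in the preferred norm. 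The history norm in $\sobh^{1}(-h,0;H)$ includes the $\leb^{2}$ part; this contributes at most an extra factor $(1+\rho^{-2})$, which is harmless. The estimate is therefore a contraction for $\rho$ large, and Banach's theorem yields a fixed point $w$. Uniqueness follows verbatim as in \cref{th:localExistOutApp}: any local solution is a fixed point of some $\Gamma_{\hat\rho,\alpha}$, and the solution operators are eventually independent of $\rho$.

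\emph{Regularity and removal of $\pi_{\alpha}$.} This is the main step. Because $I_{\rho}w+Z$ has bounded derivative on finite intervals and $\pi_{\alpha}$ maps into $V_{\alpha}$, regularity preservation gives $\widetilde{G}_{\alpha}(I_{\rho}w)\in\sobh^{1}_{\rho}(0,\infty;H)$. Its value at $0$ is $G(0,\Phi)$, since $(I_{\rho}w+Z)_{(0)}=\Phi\in V_{\alpha}$. The compatibility condition $f(0)=-G(0,\Phi)$ then makes $f+\widetilde{G}_{\alpha}(I_{\rho}w)$, extended by zero, continuous across $t=0$, so it lies in $\sobh^{1}_{\rho}(\mathbb{R};H)$. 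Now \cref{th:PicardHigh} for $k=1$ gives $w\in\sobh^{1}_{\rho}(\mathbb{R};H)$, and causality gives $\mathrm{spt}\,w\subseteq[0,\infty)$, hence $w(0)=0$ and $w$ is continuous. Consequently $(I_{\rho}w)'=w$ is continuous and vanishes at $0$. The prehistory $(I_{\rho}w+Z)_{(t)}$ has derivative $w+Z'$ on $[0,t]$ and $\Phi'$ before $0$. Since $\norm{Z'(0)}\le\norm{\Phi'}_{\infty}<\alpha$ and $Z'$ and $w$ are continuous, $\norm{w+Z'}_{\infty}<\alpha$ on some $[0,T]$. Hence $\pi_{\alpha}$ acts as the identity there, and $w$ is a local solution with $u=I_{\rho}w+Z\in\sobh^{1}(0,T;H)$.

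The delicate point is verifying that $\widetilde{G}_{\alpha}(I_{\rho}w)$ really is regularity preserving after projection, and that the compatibility condition is exactly what removes the jump at $0$.
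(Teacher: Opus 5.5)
Your proposal is correct and follows the paper's proof essentially step for step: contraction of $\Gamma_{\rho,\alpha}$ on $\leb^2_{\rho}(0,\infty;H)$, upgrade of the fixed point to $\sobh^{1}_{\rho}$ via \cref{th:PicardHigh} for $k=1$ using $f(0)=-G(0,\Phi)$ and causality, the continuity argument for $w+Z'$ that removes $\pi_{\alpha}$ near $t=0$, and uniqueness via eventual independence of $\rho$. Two small points: you invoke regularity preservation for the \emph{projected} histories exactly as informally as the paper does, and your stated reason that $I_{\rho}w+Z$ has bounded derivative is not yet available at that stage, since $w$ is only known to lie in $\leb^2_{\rho}$.

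The one real deviation is the contraction estimate. You apply \cref{th:ThetaIsLipschitz} directly to $I_{\rho}(v-w)$ and its derivative $v-w$, obtaining the factor $\tfrac{L_{\alpha}^{2}}{2\rho c^{2}}(1+\rho^{-2})$. The paper instead uses the Poincar\'e-type bound $(\square)$, which gives the factor $h^{2}+1$. Your route is the sounder one, because the paper identifies $(I_{\rho}v)_{(t)}$ with $\int_{-h}^{\cdot}v_{(t)}$ and thereby drops the constant $\int_{-\infty}^{t-h}v$. Separately, the symbol $\rho_{\ast}$ in your threshold is undefined and should be deleted.
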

\begin{proof}
  Let $\alpha > \norm{\Phi'}_{\infty}$ and $\rho > \max \bigl\{1, \tilde{\rho}, \tfrac{2 c^{2}}{L_{\alpha}^{2}(h^{2}+1)}\bigr\}$, where $L_{\alpha}$ is the Lipschitz-constant of $G$ associated to the parameter $\alpha$, cf.~\cref{def:AlmLC}, and $c$ is the minimum of the coercivity constants from \cref{th:Picard} and \cref{th:PicardHigh} for $k=1$. We start with a preliminary observation:\\
  Let $-\infty < a < b < \infty$ and $f \in \leb^2(a,b;H)$. Then we can estimate:
  \begin{align}
    \begin{aligned}
      \medint\int_{a}^{b} \norm[\Big]{\medint\int_{a}^{t}f(s)\dx[s]}^{2}\dx[t]
      &\leq \medint\int_{a}^{b} (t-a) \medint\int_{a}^{t}\norm{f(s)}^{2}\dx[t]
      = \medint\int_{a}^{b} \medint\int_{s}^{b}\underbrace{(t-a)}_{\leq b-a} \norm{f(s)}^{2}\dx[t]\dx[s]\\[-1.5ex]
      &\leq (b-a)^{2}\medint\int_{a}^{b}\norm{f(s)}^{2}\dx[s]\text{.}
    \end{aligned} \tag{$\square$}
  \end{align}
  We begin the actual proof by first showing the existence of a fixed point of
  \begin{equation*}
    v = S_{\rho}\bigl[f + G\bigl(\argdot,\pi_{\alpha}\bigl((I_{\rho}v+Z)_{(\argdot)}\bigr)\bigr)\bigr] = S_{\rho}\bigl[f+ \widetilde{G}_{\alpha}(I_{\rho}v)\bigr]\eqcolon \Gamma_{\rho,\alpha}(v)
    \quad v\in \leb^2_{\rho}(0,\infty;H)\text{,}
  \end{equation*}
  by verifying that $\Gamma_{\rho,\alpha}$ satisfies the requirements of the contraction mapping principle:
\begin{itemize}[leftmargin=3ex]
  \item $\Gamma_{\rho,\alpha}$ is a self-mapping:\\
        This follows from \cref{th:integrability}, the assumptions on $f$ and $G$ and that the solution operator is a bounded linear operator on $\leb^2_{\rho}$ (cf.~\cref{th:Picard}).
  \item $\Gamma_{\rho,\alpha}$ is Lipschitz-continuous:\\
        For $v,w \in \leb^2_{\rho}(0,\infty;H)$ we can compute:
        \begin{align*}
          &\norm{\Gamma_{\rho,\alpha}v-\Gamma_{\rho,\alpha}w}_{\leb^2_{\rho}(0,\infty;H)}^{2}\\
          &\quad= \norm[\big]{S_{\rho}\widetilde{G}_{\alpha}(I_{\rho}v) -S_{\rho}\widetilde{G}_{\alpha}(I_{\rho}w)}_{\leb^2_{\rho}(0,\infty;H)}^{2}\\
          &\quad\leq \norm{S_{\rho}}^{2} \medint\int_0^\infty \norm[\big]{G\bigl(t,\pi_{\alpha}\bigl((I_{\rho}v+Z)_{(t)}\bigr)\bigr) -G\bigl(t,\pi_{\alpha}\bigl((I_{\rho}w+Z)_{(t)}\bigr)\bigr)}_{H}^{2}\e^{-2\rho t}\dx[t]\\
          \intertext{Using almost uniform Lipschitz-continuity of $G$ and Lipschitz-continuity of $\pi_{\alpha}$ we estimate further:}
          &\quad\leq \tfrac{L_{\alpha}^{2}}{c^{2}}\medint\int_0^\infty \norm[\big]{\pi_{\alpha}\bigl((I_{\rho}v+Z)_{(t)}\bigr) -\pi_{\alpha}\bigl((I_{\rho}w+Z)_{(t)}\bigr)}_{\sobh^{1}(-h,0;H)}^{2}\e^{-2\rho t}\dx[t]\\
          &\quad\leq \tfrac{L_{\alpha}^{2}}{c^{2}}\medint\int_{0}^{\infty}\norm[\big]{(I_{\rho}v+Z)_{(t)} -(I_{\rho}w+Z)_{(t)}}_{\sobh^{1}(-h,0;H)}^{2}\e^{-2\rho t}\dx[t]\\
          \intertext{Since $I_{\rho}$ and the delay operator interchange, we can equate:}
          &\quad=  \tfrac{L_{\alpha}^{2}}{c^{2}} \medint\int_{0}^{\infty}\norm[\Big]{\smallint_{-h}^{\argdot}\bigl(v_{(t)}\bigr) -\smallint_{-h}^{\argdot}\bigl(w_{(t)}\bigr)}_{\sobh^{1}(-h,0;H)}^{2}\e^{-2\rho t}\dx[t]\\
          &\quad=  \tfrac{L_{\alpha}^{2}}{c^{2}} \medint\int_{0}^{\infty}\Big[\norm[\big]{v_{(t)} -w_{(t)}}_{\leb^{2}(-h,0;H)}^{2} +\norm[\Big]{\smallint_{-h}^{\argdot}\bigl(v_{(t)} -w_{(t)}\bigr)}_{\leb^{2}(-h,0;H)}^{2}\Big]\e^{-2\rho t}\dx[t]\\
          \intertext{We make use of the preliminary observation ($\square$) and obtain:}
          &\quad\leq  \tfrac{L_{\alpha}^{2}}{c^{2}} \medint\int_{0}^{\infty}\Big[\norm[\big]{v_{(t)} -w_{(t)}}_{\leb^{2}(-h,0;H)}^{2} +h^{2}\norm[\big]{v_{(t)} -w_{(t)}}_{\leb^{2}(-h,0;H)}^{2}\Big]\e^{-2\rho t}\dx[t]\\
          &\quad= \tfrac{L_{\alpha}^{2}}{c^{2}} (h^{2} + 1) \medint\int_{0}^{\infty} \norm{v_{(t)}-w_{(t)}}_{\leb^{2}(-h,0;H)}^{2}\e^{-2\rho t}\dx[t]\\
          &\quad= \tfrac{L_{\alpha}^{2}}{c^{2}}(h^{2} + 1) \norm{\Theta v -\Theta w}_{\leb^{2}_{\rho}(0,\infty;\leb^{2}(-h,0;H))}^{2}\\
          \intertext{Now we can appeal to Lipschitz-continuity of $\Theta$, cf.~\cref{th:ThetaIsLipschitz}:}
          &\quad\leq \tfrac{L_{\alpha}^{2}}{c^{2}}(h^{2}+1)\tfrac{1}{2\rho} \norm{v-w}_{\leb^{2}_{\rho}(-h,\infty;H)}^{2}\text{.}
        \end{align*}
  \item $\Gamma_{\rho,\alpha}$ is a contraction:\\
        This follows from the last estimate and the assumption $\rho > \max \bigl\{1,\tfrac{2 c^{2}}{L_{\alpha}^{2}(h^{2}+1)}\bigr\}$.
\end{itemize}
The contraction mapping principle therefore provides a solution $w$ of
\begin{equation*}
  w = \Gamma_{\rho,\alpha}(w)\text{,}\qquad w\in \leb^2_{\rho}(0,\infty;H)\text{.}
\end{equation*}
We again argue, that up to some $T>0$, we have $\pi_{\alpha}\bigl((I_{\rho}w + Z)_{(t)}\bigr) = \bigl(I_{\rho}w + Z\bigr)_{(t)}$.\\
Because of the integral term $I_{\rho}$, it suffices to verify that the term $w + Z'$ is bounded in $\norm{\argdot}_{\infty}$-norm by $\alpha$ up to some positive time $T>0$:
Appealing to the regularity preserving property of $G$ and the fact that $f\in \sobh^{1}_{\rho}(0,\infty;H)$ by assumption, we know $f + G\bigl(\argdot,\pi_{\alpha}\bigl((I_{\rho}w + Z)_{(\argdot)}\bigr)\bigr) \in \sobh^{1}_{\rho}(0,\infty;H)$. We further observe:
\begin{align*}
  \lim_{t\downarrow 0} G\bigl(t,\pi_{\alpha}\bigl((I_{\rho}w+Z)_{(t)}\bigr)\bigr)
  &= G\bigl(0, \pi_{\alpha}\bigl((I_{\rho}w + Z)_{(0)}\bigr)\bigr)\\[-1.5ex]
  &= G\bigl(0, \pi_{\alpha}\bigl((0 + Z)_{(0)} \bigr)\bigr)\\
  &= G(0, \Phi )\text{.}
\end{align*}
The consistency condition therefore assures
\begin{equation*}
  f + G\bigl(\argdot,\pi_{\alpha}\bigl((I_{\rho}w+Z)_{(\argdot)}\bigr)\bigr) \in \sobh^{1}_{0,\rho}(0,\infty;H)\text{.}
\end{equation*}
By virtue of \cref{th:PicardHigh}, the solution $w$ of $\Gamma_{\rho,\alpha}(w) = w$ is in fact in $\sobh^{1}_{\rho}$ and because $S_{\rho}$ is causal, we can infer $w\in \sobh^{1}_{0,\rho}(0,\infty;H)$. Hence, $\norm{w(t)}_{H}< \alpha - \norm{\Phi'}_{\infty}$ up to some positive time $\tilde{T}>0$. Since $Z' \in \mathcal{C}([0,\tilde{T}];H)$ and $\norm{Z'(0)}_{H}\leq \norm{\Phi'}_{\infty}$ by assumption, continuity provides a positive $0<T\leq \tilde{T}$ such that $\norm{w + Z'}_{\mathcal{C}([-h,T];H)}\leq \alpha$.\\
Uniqueness of the solution follows similarly as in the proof of \cref{th:localExistOut} from the uniqueness of the fixed point of $\Gamma_{\rho,\alpha}$ by appealing to the contraction mapping principle and eventual independence of the weight parameter $\rho$ appealing to \cref{th:Picard}.
\end{proof}

\begin{remark}[maximal existence interval]
  \label{rmk:maxExistIntIns}
  A closer inspection of the proof reveals that the unique local solution $u = I_{\rho}v + Z$ of \cref{th:localExistIns} exists up to the smallest $T>0$ at which $\lim_{t\uparrow T}\norm{u}_{\mathcal{C}([0,t];H)}=+\infty$:\\
  Indeed, in the proof of \cref{th:localExistIns}, the solution of the fixed point theorem with $\pi_{\alpha}$,
  \begin{equation*}
    v = \Gamma_{\rho, \alpha}(w) = S_{\rho}\bigl[f + G\bigl(\argdot, \pi_{\alpha}\bigl((I_{\rho}v +Z)_{(\argdot)}\bigr)\bigr)\bigr]\text{,} \qquad v \in \leb^2_{\rho}(0,\infty;H)\text{,}
  \end{equation*}
  is a local solution of \cref{eq:FPPIntIns} for as long as the $\norm{\argdot}_{\infty}$-norm of $v$ remains bounded by $\alpha$. The claim follows as in the case of \cref{eq:FPPIntOut}, cf.~\cref{rmk:maxExistIntOut}, replacing $v'$ and $w'$ with $v$ and $w$ respectively.
\end{remark}

\begin{remark}
  The following differences to \cite[thm.~4.1\&4.2]{AignerWaurick2026}, i.e.\ the autonomous versions of the above theorems, should be noted:
  \begin{itemize}[leftmargin = 4ex]
    \item In the nonautonomous case, regularity theory for the solution operator in the form $\partial_{t}S_{\rho}\supseteq S_{\rho}\partial_{t}$ is not available and compensated for by the coercivity assumptions in $\sobh^{k}_{\rho}$. These are automatically satisfied in the autonomous case and therefore represent no additional requirement in the autonomous case.
    \item In \cref{th:localExistOutApp}, well-posedness in $\sobh^{2}_{\rho}$ in addition to well-posedness in $\sobh^{1}_{\rho}$ was required. This is a strong assumption mandated by the mathematical difficulty of not being able to interchange $S_{\rho}$ and $\partial_{t}$ in the nonautonomous case and infer $w'(0) = 0$ in the proof of \cref{th:localExistOutApp}. It is possible to calculate the commutator
          \begin{equation*}
            \big[\partial_{t},S_{\rho}\big] = \partial_{t}S_{\rho}I_{\rho}(\partial_{t}M'+N')I_{\rho}S_{\rho}\partial_{t},
          \end{equation*}
          where $M' = [\partial_{t},M]$ and $N' = [\partial_{t},N]$, but this unfortunately proves unhelpful in showing $w'(0) = 0$, which is required for the continuity argument in the proof.
  \end{itemize}
\end{remark}

\section{Convolutions}
\label{app:Convolutions}
This segment serves as a brief reminder on convolution operators arising from a real-valued kernel $\mu$; more specifically, assume that
\begin{itemize}[leftmargin = 4ex]
  \item $\mu \colon \mathbb{R}_{\geq 0}\to \mathbb{R}_{\geq 0}$ is a locally absolutely continuous function,
  \item $\medint\int_{0}^{\infty}\mu(s)\dx[s] \eqcolon \overline{\mu} <1$ and
  \item $\medint\int_{0}^{\infty}\abs{\mu'(s)}\dx[s] < \infty$.
\end{itemize}

To study convolution operators, it is useful to utilizeFourier methods. Since we use the exponentially weighted spaces $\leb^{2}_{\rho}(\mathbb{R};H)$, we employ the {\em Fourier-Laplace transform} $\mathcal{L}_{\rho}$, which is the unitary extension of
\begin{align}
  \label{def:FourierLaplace}
  \begin{aligned}
  \mathring{\mathcal{C}}^{\infty}(\mathbb{R};H) &\to \mathcal{C}^{\infty}(\mathbb{R};H)\text{,}\\
  \phi &\mapsto \Bigl(t\mapsto \tfrac{1}{\sqrt{2\pi}} \medint\int_{\mathbb{R}} \e^{-(\iu t + \rho)s} \phi(s)\dx[s]\Bigr)
  \end{aligned}
\end{align}
to a map $\mathcal{L}_{\rho}\colon \leb^{2}_{\rho}(\mathbb{R};H)\to \leb^{2}(\mathbb{R};H)$, cf.~\cite[ch.~5.2]{STW2022}. It adheres to standard calculation rules; in particular,
\begin{enumerate}[label = (\roman*), leftmargin = 5ex]
  \item\label{it:Useful1} $\forall f\in \leb^{2}_{\rho}(\mathbb{R},H)\;\forall k \in \leb^{1}_{\rho}(\mathbb{R})\colon \flt(f\ast k) = \sqrt{2\pi} \flt(f)\flt(k)$.
  \item\label{it:Useful2} $\forall f \in \leb^{2}_{\rho}(\mathbb{R};H)\colon \overline\flt(f) = \mathcal{L}_{-\rho}(f(-\argdot))$.
  \item\label{it:Useful3} $\forall k \in \leb^{1}_{\rho}(\mathbb{R};H)\colon \norm{\flt(k)}_{\infty}\leq (2\pi)^{-\nicefrac{1}{2}}\norm{k}_{\leb^{1}_{\rho}}$.
  \item\label{it:Useful4} $\forall k \in \sobh^{1}_{\rho}(\mathbb{R};H)\colon \flt(k') = (\iu \mathrm{m} + \rho)\flt(k)$.\footnote{Here, $\mathrm{m}$ denotes the multiplication with the argument operator.}
\end{enumerate}

\begin{lemma}
  Let $\rho >0$. Then $(1 - \mu \ast)$ and $(1 - \mu \ast)^{-1}$ define elements of $\mathcal{L}\big(\leb^{2}_{\rho}(\mathbb{R};H)\big)$. Moreover, there exists $c>0$ such that
  \begin{alignat*}{2}
    \forall \varphi \in \leb^{2}_{\rho}(\mathbb{R};H)\colon &\Re \dualprod[\big]{(1-\mu\ast)\varphi}{\varphi}_{\leb^{2}_{\rho}(\mathbb{R};H)}&&\geq c \norm{\varphi}_{\leb^{2}_{\rho}(\mathbb{R};H)}^{2},\\
    &\Re \dualprod[\big]{(1-\mu\ast)^{-1}\varphi}{\varphi}_{\leb^{2}_{\rho}(\mathbb{R};H)}&&\geq \tfrac{c}{\norm{(1-\mu\ast)}^{2}_{\mathcal{L}(\leb^{2}_{\rho}(\mathbb{R};H))}} \norm{\varphi}_{\leb^{2}_{\rho}(\mathbb{R};H)}^{2}.
  \end{alignat*}
\end{lemma}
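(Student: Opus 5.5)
The plan is to work entirely on the Fourier--Laplace side, using that $\flt$ is unitary from $\leb^{2}_{\rho}(\mathbb{R};H)$ onto $\leb^{2}(\mathbb{R};H)$. By rule \ref{it:Useful1}, convolution with $\mu$ (extended by zero to $(-\infty,0)$) becomes multiplication by $\sqrt{2\pi}\,\flt(\mu)$. Hence $(1-\mu\ast)$ is unitarily equivalent to multiplication by the scalar function
\begin{equation*}
  m_{\rho}(\xi)\coloneq 1-\sqrt{2\pi}\,\flt(\mu)(\xi)=1-\medint\int_{0}^{\infty}\e^{-(\iu\xi+\rho)s}\mu(s)\dx[s].
\end{equation*}
Boundedness of $(1-\mu\ast)$ then follows at once. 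Since $\rho>0$ and $\mu\geq 0$, we have $\norm{\mu}_{\leb^{1}_{\rho}}\leq\overline{\mu}$, and rule \ref{it:Useful3} gives $\abs{m_{\rho}(\xi)}\leq 1+\overline{\mu}$ uniformly in $\xi$.

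The key step is a uniform lower bound on $\Re m_{\rho}$. I would use the triangle inequality directly:
\begin{equation*}
  \abs[\Big]{\medint\int_{0}^{\infty}\e^{-(\iu\xi+\rho)s}\mu(s)\dx[s]}\leq\medint\int_{0}^{\infty}\e^{-\rho s}\mu(s)\dx[s]\leq\overline{\mu}<1.
\end{equation*}
Hence $\Re m_{\rho}(\xi)\geq 1-\overline{\mu}\eqcolon c>0$ for all $\xi\in\mathbb{R}$. Plancherel then gives
\begin{equation*}
  \Re\dualprod{(1-\mu\ast)\varphi}{\varphi}_{\leb^{2}_{\rho}}=\medint\int_{\mathbb{R}}\Re m_{\rho}(\xi)\,\norm{\flt\varphi(\xi)}^{2}\dx[\xi]\geq c\norm{\varphi}^{2}_{\leb^{2}_{\rho}}.
\end{equation*}
This is the first inequality. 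It also shows $\abs{m_{\rho}}\geq c$, so $1/m_{\rho}$ is a bounded multiplier and $(1-\mu\ast)^{-1}=\flt^{\ast}(1/m_{\rho})\flt$ is bounded with norm at most $1/c$. The assumptions of local absolute continuity of $\mu$ and integrability of $\mu'$ are not needed here; they matter only later, for the $\sobh^{1}_{\rho}$-regularity statements.

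For the inverse I would not compute $1/m_{\rho}$ explicitly. Instead I would use the standard argument from \cite[prop.~6.2.3]{STW2022}, which the paper already invoked for $a^{-1}$. Let $B=(1-\mu\ast)$ and $\psi=B^{-1}\varphi$. Then
\begin{equation*}
  \Re\dualprod{B^{-1}\varphi}{\varphi}=\Re\dualprod{\psi}{B\psi}\geq c\norm{\psi}^{2}\geq\tfrac{c}{\norm{B}^{2}}\norm{\varphi}^{2},
\end{equation*}
because $\norm{\varphi}\leq\norm{B}\norm{\psi}$. This is exactly the second inequality with the stated constant.

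The only point I expect to need care is the precise meaning of $\mu\ast$ on $\leb^{2}_{\rho}(\mathbb{R};H)$. I would take the kernel to be $\mu\chi_{[0,\infty)}$, so that $\mu\in\leb^{1}_{\rho}(\mathbb{R})$ for every $\rho\geq0$, and rule \ref{it:Useful1} applies, possibly after a density argument from $\mathring{\mathcal{C}}^{\infty}$. The rest of the argument is routine.
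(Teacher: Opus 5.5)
Your proof is correct and follows the paper's argument. Both pass to the Fourier--Laplace side, use $\sqrt{2\pi}\abs{\flt(\mu)}\leq\norm{\mu}_{\leb^{1}_{\rho}}\leq\overline{\mu}$ to get the first estimate with $c=1-\overline{\mu}$, and obtain the second estimate from the substitution $\psi=(1-\mu\ast)^{-1}\varphi$. The one difference is invertibility: you read it off directly, with norm at most $1/c$, from the symbol $1-\sqrt{2\pi}\flt(\mu)$ being bounded away from zero. The paper instead also proves coercivity of the adjoint via rule~\ref{it:Useful2} and cites \cite[prop.~6.3.1]{STW2022}. That extra step is automatic for bounded operators anyway, since $\Re\dualprod{B^{\ast}\varphi}{\varphi}=\Re\dualprod{B\varphi}{\varphi}$.
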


\begin{proof}
  Since $\mathrm{spt}(\mu)\subseteq [0,\infty)$, it is clear that $(1 - \mu\ast) \in \mathcal{L}\big(\leb^{2}_{\rho}(\mathbb{R};H)\big)$; in particular, this implies $(1 - \mu\ast)^{\ast} \in \mathcal{L}(\leb^{2}_{\rho}(\mathbb{R};H))$. We show the first coercivity estimate: Let $\varphi \in \leb^{2}_{\rho}(\mathbb{R};H)$,
  \begin{align*}
    \Re \dualprod[\big]{(1-\mu\ast)\varphi}{\varphi}_{\leb^{2}_{\rho}}
    &= \norm{\varphi}_{\leb^{2}_{\rho}}^{2} - \Re \dualprod[\big]{\flt(\mu \ast \varphi)}{\flt(\varphi)}_{\leb^{2}}\\
    &\overset{\ref{it:Useful1}}{=} \norm{\varphi}_{\leb^{2}_{\rho}}^{2} - \Re \sqrt{2\pi}\dualprod[\big]{\flt(\mu) \flt(\varphi)}{\flt(\varphi)}_{\leb^{2}}\\
    &\overset{\ref{it:Useful3}}{\geq} \norm{\varphi}_{\leb^{2}_{\rho}}^{2}(1- \norm{k}_{\leb^{1}_{\rho}})
      \geq \underbrace{(1-\overline{\mu})}_{\eqcolon c>0}\norm{\varphi}_{\leb^{2}_{\rho}}^{2}.
  \end{align*}
  Fact \ref{it:Useful2} and a similar calculation prove
  \begin{equation*}
    \Re \dualprod[\big]{(1-\mu\ast)^{\ast}\varphi}{\varphi}_{\leb^{2}_{\rho}} \geq c \norm{\varphi}_{\leb^{2}_{\rho}}^{2}.
  \end{equation*}
  Consequently, $(1 - \mu\ast)$ is continuously invertible by \cite[prop.~6.3.1]{STW2022}. The second claimed coercivity follows easily by calculation.
\end{proof}

\begin{lemma}
  Let $\rho>0$. Then $\operatorname{ran}(\mu\ast) \subseteq \sobh^{1}_{\rho}(\mathbb{R};H)$ and for any $u \in \leb^{2}_{\rho}(\mathbb{R};H)$, $\partial_{t}(\mu \ast u) = \mu(0)u + \mu'\ast u$.
\end{lemma}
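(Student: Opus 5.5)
The approach is to work first with smooth compactly supported $u$, derive the formula by a direct computation, and then pass to general $u$ by density and boundedness. Throughout, $\mu$ is extended by $0$ on $(-\infty,0)$, so that $(\mu\ast u)(t)=\int_{0}^{\infty}\mu(s)u(t-s)\dx[s]$.

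\textbf{Step 1: boundedness on $\leb^{2}_{\rho}$.} Local absolute continuity together with $\int_{0}^{\infty}\abs{\mu'}<\infty$ implies that $\mu(s)=\mu(0)+\int_{0}^{s}\mu'$ has a finite limit as $s\to\infty$. Integrability of $\mu$ forces this limit to be $0$, so $\mu$ is bounded. Since $\e^{-\rho s}\le 1$ for $s\ge0$, both $\mu$ and $\mu'$ (extended by zero) lie in $\leb^{1}_{\rho}(\mathbb{R})$. Young's inequality in the weighted form, or equivalently fact \ref{it:Useful1} combined with \ref{it:Useful3}, then shows that $u\mapsto \mu\ast u$ and $u\mapsto \mu'\ast u$ are bounded on $\leb^{2}_{\rho}(\mathbb{R};H)$, with norms at most $\norm{\mu}_{\leb^{1}_{\rho}}$ and $\norm{\mu'}_{\leb^{1}_{\rho}}$ respectively.

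\textbf{Step 2: the identity for test functions.} Take $u\in\mathring{\mathcal{C}}^{\infty}(\mathbb{R};H)$ and write $(\mu\ast u)(t)=\int_{-\infty}^{t}\mu(t-r)u(r)\dx[r]$. Since $\mu$ is absolutely continuous on compacta and $u$ has compact support, differentiation under the integral (Leibniz rule) is justified. It gives
\begin{equation*}
  \tfrac{\dd}{\dx[t]}(\mu\ast u)(t)=\mu(0)u(t)+\int_{-\infty}^{t}\mu'(t-r)u(r)\dx[r].
\end{equation*}
To keep the justification clean, I would instead verify the integrated identity
\begin{equation*}
  (\mu\ast u)(t)=\int_{-\infty}^{t}\big(\mu(0)u+\mu'\ast u\big)(\sigma)\dx[\sigma]
\end{equation*}
directly by Fubini and $\mu(t-r)=\mu(0)+\int_{r}^{t}\mu'(\sigma-r)\dx[\sigma]$. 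This identity says exactly that $\mu\ast u=I_{\rho}\big(\mu(0)u+\mu'\ast u\big)$, and therefore $\mu\ast u\in\operatorname{dom}(\partial_{t,\rho})=\sobh^{1}_{\rho}(\mathbb{R};H)$ with the claimed derivative.

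\textbf{Step 3: extension to all $u$.} For general $u\in\leb^{2}_{\rho}(\mathbb{R};H)$, choose $u_{n}\to u$ from $\mathring{\mathcal{C}}^{\infty}$. By Step 1, $\mu\ast u_{n}\to\mu\ast u$ and $\mu(0)u_{n}+\mu'\ast u_{n}\to\mu(0)u+\mu'\ast u$ in $\leb^{2}_{\rho}$. Since $\partial_{t,\rho}$ is closed, being the inverse of the bounded operator $I_{\rho}$, the limit $\mu\ast u$ lies in $\operatorname{dom}(\partial_{t,\rho})$ and $\partial_{t}(\mu\ast u)=\mu(0)u+\mu'\ast u$. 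Alternatively, one can apply the bounded operator $I_{\rho}$ to the identity of Step 2 and pass to the limit directly.

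The main obstacle is Step 1, specifically the membership $\mu,\mu'\in\leb^{1}_{\rho}$ and the boundedness of $\mu$. Everything else is the standard Fubini computation followed by a closedness argument.
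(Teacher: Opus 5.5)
Your proof is correct. It rests on the same Fubini interchange, via $\mu(t-r)=\mu(0)+\int_{r}^{t}\mu'(\sigma-r)\dx[\sigma]$, that the paper performs when it tests $\mu\ast u$ against $\varphi'$. You phrase the result as $\mu\ast u=I_{\rho}\big(\mu(0)u+\mu'\ast u\big)$, which matches the definition $\partial_{t,\rho}=I_{\rho}^{-1}$ directly, and your weighted Young bound supplies the $\leb^{2}_{\rho}$-membership of $\mu\ast u$ and $\mu'\ast u$ that the paper only asserts.

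The density and closedness step is harmless but unnecessary. For $\rho>0$, Cauchy--Schwarz shows every $u\in\leb^{2}_{\rho}(\mathbb{R};H)$ is integrable on each half-line $(-\infty,t]$, so the Fubini computation applies to $u$ itself. The boundedness of $\mu$ is also never used.
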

\begin{proof}
  Let $\varphi \in \mathring{\mathcal{C}}^{\infty}(\mathbb{R})$. We test with $\varphi'$,
  \begin{align*}
    \medint\int_{-\infty}^{\infty} (\mu\ast u)(t)\varphi'(t)\dx[t]
    &= \medint\int_{-\infty}^{\infty}\medint\int_{-\infty}^{t}\mu(t-s)u(s)\dx[s]\varphi'(t)\dx[t]\\
    &= \medint\int_{-\infty}^{\infty} - \bigg[\mu(0) u(t) + \medint\int_{-\infty}^{t}\mu'(t-s)u(s)\dx[s]\bigg]\varphi(t)\dx[t]\\
    &= -\medint\int_{-\infty}^{\infty}\mu(0)u(t)\varphi(t) + (\mu'\ast u)(t)\varphi(t)\dx[t].
  \end{align*}
  Hence, the distributional derivative of $(\mu\ast u)$ is $\mu(0)u + \mu'\ast u$. This derivative defines a $\leb^{2}_{\rho}$-function assuming that $\mu' \in \leb^{1}(0,\infty)$.
\end{proof}

\begin{lemma}
  \label{th:FLTMu}
  For $\rho > 0$ large enough and for any $\psi \in \leb^{2}_{\rho}(\mathbb{R};H)$,
  \begin{equation*}
    \flt \big((1-\mu\ast)^{-1}\psi\big) = \frac{\flt(\psi)}{(1-\sqrt{2\pi})\flt(\mu)}.
  \end{equation*}
\end{lemma}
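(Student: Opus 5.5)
The plan is to apply the Fourier--Laplace transform $\flt$ to the identity $(1-\mu\ast)\varphi = \psi$ with $\varphi \coloneq (1-\mu\ast)^{-1}\psi$, and then solve for $\flt(\varphi)$ pointwise in the frequency variable. The previous lemma guarantees that $\varphi \in \leb^{2}_{\rho}(\mathbb{R};H)$ is well defined.

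I read the denominator in the statement as the multiplier $1-\sqrt{2\pi}\,\flt(\mu)$. The printed placement of the parentheses, $(1-\sqrt{2\pi})\flt(\mu)$, is what I take to be a typographical slip: with $\mu = 0$ the operator $(1-\mu\ast)^{-1}$ is the identity and the right-hand side must reduce to $\flt(\psi)$, which only the reading $1-\sqrt{2\pi}\,\flt(\mu)$ achieves. The later computation in the $\sobh^{1}_{\rho}$-coercivity estimate uses the formula in this same sense. So the identity I will prove is
\begin{equation*}
  \flt\big((1-\mu\ast)^{-1}\psi\big) = \frac{\flt(\psi)}{1-\sqrt{2\pi}\,\flt(\mu)}.
\end{equation*}

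The key steps, in order, are as follows.

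First, I check that the kernel (extended by zero to $(-\infty,0)$) lies in $\leb^{1}_{\rho}(\mathbb{R})$. Since $\mu \geq 0$ is supported in $[0,\infty)$ and $\e^{-\rho s}\leq 1$ there for $\rho>0$, we get $\norm{\mu}_{\leb^{1}_{\rho}} \leq \overline{\mu} < 1$.

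Second, by linearity of $\flt$ and calculation rule \ref{it:Useful1}, I obtain
\begin{equation*}
  \flt(\psi) = \flt\big((1-\mu\ast)\varphi\big) = \big(1-\sqrt{2\pi}\,\flt(\mu)\big)\flt(\varphi),
\end{equation*}
understood as an identity of functions in $\leb^{2}(\mathbb{R};H)$.

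Third, rule \ref{it:Useful3} gives, for every $t$,
\begin{equation*}
  \abs{\sqrt{2\pi}\,\flt(\mu)(t)} \leq \norm{\mu}_{\leb^{1}_{\rho}} \leq \overline{\mu} < 1 .
\end{equation*}
Hence the scalar multiplier satisfies $\abs{1-\sqrt{2\pi}\,\flt(\mu)} \geq 1-\overline{\mu} > 0$ everywhere. In particular, $\big(1-\sqrt{2\pi}\,\flt(\mu)\big)^{-1}$ is a bounded measurable function, so division by it is a bounded multiplication operator on $\leb^{2}(\mathbb{R};H)$.

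Fourth, dividing pointwise yields the claimed formula. Since $\flt$ is unitary, this simultaneously identifies $(1-\mu\ast)^{-1}$ as $\flt^{-1}$ composed with this multiplication operator and $\flt$, which is consistent with the previous lemma.

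I do not expect a genuine analytic obstacle. The only delicate point is justifying rule \ref{it:Useful1} for the one-sided kernel $\mu$, including the constant $\sqrt{2\pi}$ in the chosen normalisation of $\flt$; I would take this as given from the stated calculation rules. The phrase ``$\rho$ large enough'' in the statement is then harmless, since the argument works for every $\rho>0$.
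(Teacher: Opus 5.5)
Your proposal is correct and follows the paper's own argument. Like the paper, you apply $\flt$ to $(1-\mu\ast)\varphi=\psi$ via the convolution rule and divide by the multiplier $1-\sqrt{2\pi}\,\flt(\mu)$; the paper's proof confirms your reading of the misplaced parentheses. The one difference is how the multiplier is shown to be invertible: you bound $\abs{\sqrt{2\pi}\,\flt(\mu)}\leq\norm{\mu}_{\leb^{1}_{\rho}}\leq\overline{\mu}<1$ for every $\rho>0$, whereas the paper uses $\norm{\mu}_{\leb^{1}_{\rho}}\to 0$ as $\rho\to\infty$, so your version removes the ``$\rho$ large enough'' restriction.
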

\begin{proof}
  Let $\varphi \in \leb^{2}_{\rho}(\mathbb{R};H)$ and set $\psi \coloneq (1- \mu \ast)\varphi$. Then
  \begin{equation*}
    \flt(\psi) = \flt(\varphi) - \sqrt{2\pi}\flt(\mu)\flt(\varphi) = \big(1-\sqrt{2\pi}\flt(\mu)\big)\flt(\varphi).
  \end{equation*}
  Note that $\flt(\mu)$ is pointwise bounded by $(2\pi)^{-\nicefrac{1}{2}}\norm{\mu}_{\leb^{1}_{\rho}(\mathbb{R};H)}$, which converges to $0$ as $\rho \to \infty$. From the same calculation we infer $\flt(\varphi) = \big(1-\sqrt{2\pi}\flt(\mu)\big)^{-1}\flt(\psi)$.
\end{proof}

\begin{lemma}
  \label{th:Convolution}
  For $\rho> 0$ large enough, $\operatorname{ran}\big((1-\mu\ast)^{-1}\big)\subseteq \sobh^{1}_{\rho}(\mathbb{R};H)$. Moreover, there exists $c>0$ such that
  \begin{equation*}
    \forall \varphi \in \leb^{2}_{\rho}(\mathbb{R};H)\colon \Re \dualprod[\big]{\partial_{t}(1-\mu\ast)^{-1}\varphi}{\varphi}_{\leb^{2}_{\rho}(\mathbb{R};H)}\geq (\rho - c)\norm{\varphi}_{\leb^{2}_{\rho}(\mathbb{R};H)}^{2}.
  \end{equation*}
\end{lemma}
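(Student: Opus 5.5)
The plan is to work entirely on the Fourier–Laplace side, using \cref{th:FLTMu} together with the calculation rules \ref{it:Useful1}--\ref{it:Useful4}. Write $k_{\rho}\coloneq\sqrt{2\pi}\,\flt(\mu)$. By \ref{it:Useful3}, $\norm{k_{\rho}}_{\infty}\leq\norm{\mu}_{\leb^{1}_{\rho}}\leq\overline{\mu}<1$ for every $\rho>0$, so $1-k_{\rho}$ is bounded away from zero uniformly, namely by $1-\overline{\mu}$. First I will use the resolvent-type identity
\begin{equation*}
  (1-\mu\ast)^{-1}=1+\mu\ast(1-\mu\ast)^{-1}.
\end{equation*}
Combined with the preceding lemma, which gives $\operatorname{ran}(\mu\ast)\subseteq\sobh^{1}_{\rho}(\mathbb{R};H)$ and $\partial_{t}(\mu\ast u)=\mu(0)u+\mu'\ast u$, this shows that the nontrivial part $\mu\ast(1-\mu\ast)^{-1}\varphi$ lies in $\sobh^{1}_{\rho}$ for all $\varphi\in\leb^{2}_{\rho}$. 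The identity summand is handled by $\partial_{t,\rho}$ in the sense of evolutionary equations (as for $M=1$ in \cref{th:Picard}). In this reading, $\partial_{t}(1-\mu\ast)^{-1}$ is the operator whose Fourier–Laplace symbol is $(\iu\mathrm{m}+\rho)/(1-k_{\rho})$, and this is how the range statement is to be understood.

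For the coercivity estimate I will compute
\begin{equation*}
  \Re\dualprod[\big]{\partial_{t}(1-\mu\ast)^{-1}\varphi}{\varphi}_{\leb^{2}_{\rho}}
  =\Re\dualprod[\Big]{\tfrac{\iu\mathrm{m}+\rho}{1-k_{\rho}}\flt\varphi}{\flt\varphi}_{\leb^{2}}
\end{equation*}
and split the symbol as
\begin{equation*}
  \tfrac{\iu t+\rho}{1-k_{\rho}(t)}=(\iu t+\rho)+\tfrac{(\iu t+\rho)k_{\rho}(t)}{1-k_{\rho}(t)}.
\end{equation*}
The first part contributes exactly $\rho\norm{\varphi}^{2}_{\leb^{2}_{\rho}}$, in line with \cref{th:Derivative}. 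For the second part, \ref{it:Useful4} applied to $\mu$ extended by zero gives
\begin{equation*}
  (\iu t+\rho)k_{\rho}(t)=\mu(0)+\sqrt{2\pi}\,\flt(\mu')(t),
\end{equation*}
which is bounded by $\mu(0)+\norm{\mu'}_{\leb^{1}}$ because $\mu'\in\leb^{1}(0,\infty)$. Hence the second summand of the symbol is bounded in modulus by $c\coloneq(\mu(0)+\norm{\mu'}_{\leb^{1}})/(1-\overline{\mu})$, and its real part contributes at least $-c\norm{\varphi}^{2}_{\leb^{2}_{\rho}}$ by Plancherel. Adding the two parts yields the claimed bound $(\rho-c)\norm{\varphi}^{2}$.

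The main obstacle is making sense of the pairing for all $\varphi\in\leb^{2}_{\rho}$, since the symbol grows linearly in $t$. I will first prove the estimate for $\varphi\in\sobh^{1}_{\rho}(\mathbb{R};H)$, where every term is finite. I will then extend it to all of $\leb^{2}_{\rho}$ using two facts. First, the real part of the symbol is $\rho+\Re\bigl[(\iu t+\rho)k_{\rho}/(1-k_{\rho})\bigr]$, which is bounded in $t$ because the imaginary part $t$ of the first summand drops out. Second, the resulting quadratic form is therefore bounded on $\leb^{2}_{\rho}$ and continuous, so density of $\sobh^{1}_{\rho}$ in $\leb^{2}_{\rho}$ carries the inequality over.
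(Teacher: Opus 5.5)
Your argument for the inequality is essentially the paper's proof. You pass to the Fourier--Laplace symbol $(\iu t+\rho)/(1-k_{\rho}(t))$, integrate by parts to get $(\iu t+\rho)k_{\rho}(t)=\mu(0)+\sqrt{2\pi}\,\flt(\mu')(t)$, and bound the resulting perturbation of $\iu t+\rho$. Writing that perturbation in closed form as $(\iu t+\rho)k_{\rho}/(1-k_{\rho})$ instead of expanding a Neumann series is cosmetic. It does buy you something: the explicit constant $c=(\mu(0)+\norm{\mu'}_{\leb^{1}})/(1-\overline{\mu})$ is independent of $\rho$ and works for every $\rho>0$, whereas the paper's constant depends on $\rho$ and needs $\rho$ large.

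Your split is in fact $\partial_{t}(1-\mu\ast)^{-1}=\partial_{t,\rho}+B$ with $B\coloneq(\mu(0)+\mu'\ast)(1-\mu\ast)^{-1}\in\mathcal{L}(\leb^{2}_{\rho})$ and $\norm{B}\leq c$. So \cref{th:Derivative} gives the estimate without any symbol computation.

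One small correction: \ref{it:Useful4} cannot be applied to $\mu$ extended by zero. When $\mu(0)\neq 0$, the jump at $0$ keeps that function out of $\sobh^{1}_{\rho}$. Integrate by parts on $[0,\infty)$ directly, as the paper does. The boundary term at infinity vanishes because $\mu$ is bounded, which follows from $\mu'\in\leb^{1}$.

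\textbf{The range assertion.} This is the point you should not gloss over: as stated, the range claim is false, and ``this is how the range statement is to be understood'' hides that. By the preceding lemma, $(1-\mu\ast)^{-1}$ is a bijection of $\leb^{2}_{\rho}(\mathbb{R};H)$, so its range is all of $\leb^{2}_{\rho}$. Your own identity $(1-\mu\ast)^{-1}=1+\mu\ast(1-\mu\ast)^{-1}$ shows that $(1-\mu\ast)^{-1}\varphi\in\sobh^{1}_{\rho}$ if and only if $\varphi\in\sobh^{1}_{\rho}$.

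What your argument does prove is that $(1-\mu\ast)^{-1}-1$ maps $\leb^{2}_{\rho}$ into $\sobh^{1}_{\rho}$. Hence $(1-\mu\ast)^{-1}$ leaves $\sobh^{1}_{\rho}$ invariant, and $\partial_{t}(1-\mu\ast)^{-1}$ is defined exactly on $\sobh^{1}_{\rho}=\operatorname{dom}(\partial_{t,\rho})$. This is all the application needs, since the coercivity hypothesis of \cref{th:Picard} only involves $\varphi\in\operatorname{dom}(\partial_{t,\rho})$. Say so explicitly as a correction; the paper's proof does not address the range claim at all.

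The same applies to the quantifier in the inequality. For $\varphi\notin\sobh^{1}_{\rho}$ the left-hand side is not an $\leb^{2}_{\rho}$ inner product. Your density step therefore amounts to defining it as $\int_{\mathbb{R}}\Re\bigl[(\iu t+\rho)/(1-k_{\rho}(t))\bigr]\norm{(\flt\varphi)(t)}^{2}\dx[t]$. With that definition, the pointwise bound on the real part of the symbol gives the inequality immediately, and density is superfluous. The honest statement is the one for $\varphi\in\sobh^{1}_{\rho}$.
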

\begin{proof}
  We initially observe:
  \begin{align*}
    \flt(\mu) &= \medint\int_{0}^{\infty} \e^{-(\iu t +\rho)s}\mu(s)\dx[s]\\
              &= -\tfrac{1}{\iu t + \rho}\medint\int_{0}^{\infty}-(\iu t + \rho)\e^{-(\iu t +\rho)s}\mu(s)\dx[s]\\
              &= -\tfrac{1}{\iu t + \rho}\big[\e^{-(\iu t + \rho)s}\mu(s)\big]_{s=0}^{\infty}
                + \tfrac{1}{\iu t + \rho}\medint\int_{0}^{\infty}\e^{-(\iu t +\rho)s}\mu'(s)\dx[s]\\
              &= \tfrac{1}{\iu t + \rho}\flt(\mu') + \tfrac{\mu(0)}{\iu t + \rho}.
  \end{align*}
  Realizing that $\abs[\big]{\sqrt{2\pi}\flt(\mu)(t)}<1$ for all $t \in \mathbb{R}$ for $\rho >0$ large enough, one can estimate for $t \in \mathbb{R}$
  \begin{align*}
    \tfrac{\iu t + \rho}{1 - \sqrt{2\pi}\flt(\mu)(t)}
    &= (\iu t + \rho)\sum_{k=0}^{\infty}\big(\sqrt{2\pi}\flt(\mu)(t)\big)^{k}\\
    &= \iu t + \rho + \sum_{k=1}^{\infty}(\iu t + \rho)\big(\sqrt{2\pi}\flt(\mu)(t)\big)^{k}\\
    &= \iu t + \rho + \sqrt{2\pi}\big(\flt(\mu')(t) + \mu(0)\big)\sum_{k=0}^{\infty}\big(\sqrt{2\pi}\flt(\mu)(t)\big)^{k}.
  \end{align*}
  Consequently,
  \begin{align*}
    \Re \tfrac{\iu t + \rho}{1 - \sqrt{2\pi}\flt(\mu)(t)}
    &\geq \rho - \Re\bigg[\sqrt{2\pi}\big(\flt(\mu')(t) + \mu(0)\big)\sum_{k=0}^{\infty}\big(\sqrt{2\pi}\flt(\mu)(t)\big)^{k}\bigg]\\
    &\geq \rho - \big(\norm{\mu'}_{\leb^{1}_{\rho}} + \sqrt{2\pi}\abs{\mu(0)}\big)\tfrac{1}{1 - \norm{\mu'}_{\leb^{1}_{\rho}}}.
  \end{align*}
  Note that the latter term is bounded for $\rho \to \infty$, since
  \begin{equation*}
    \big(\norm{\mu'}_{\leb^{1}_{\rho}} + \sqrt{2\pi}\abs{\mu(0)}\big)\frac{1}{1 - \norm{\mu'}_{\leb^{1}_{\rho}}} \xrightarrow[]{\rho \to \infty} \sqrt{2\pi}\abs{\mu(0)}.
  \end{equation*}
  Finally, this allows the verification of the desired coercivity estimate,
  \begin{align*}
    \Re \dualprod[\big]{\partial_{t,\rho}(1-\mu\ast)^{-1}f}{f}_{\leb^{2}_{\rho}(\mathbb{R};H)}
    &= \Re \dualprod[\bigg]{\frac{(\iu \mathrm{m} + \rho)}{1 - \sqrt{2\pi}\flt(\mu)}\flt(f)}{\flt(f)}_{\leb^{2}(\mathbb{R};H)}\\
    &= \medint\int_{-\infty}^{\infty} \Re \tfrac{\iu t + \rho}{1 - \sqrt{2\pi}\flt(\mu)(t)}\norm{\flt(f)(t)}_{H}^{2}\dx[t]\\
    &\geq \bigg[\rho -\underbrace{\frac{\norm{\mu'}_{\leb^{1}_{\rho}} + \sqrt{2\pi}\abs{\mu(0)}}{1 - \norm{\mu'}_{\leb^{1}_{\rho}}}}_{\eqcolon c}\bigg]\norm{\flt{f}}_{\leb^{2}(\mathbb{R};H)}^{2}\\
    &= (\rho - c) \norm{f}_{\leb^{2}_{\rho}(\mathbb{R};H)}^{2}\qedhere
  \end{align*}
\end{proof}

\section{Abstract Dirichlet and Neumann boundary conditions}
\label{app:AbstractBD}
Let $\Omega \subseteq \mathbb{R}^{d}$ be a bounded, open Lipschitz domain. Let
\begin{enumerate}[label = (\roman*), leftmargin = 5ex]
  \item $\operatorname{grad}\colon \leb^{2}(\Omega) \supseteq \sobh^{1}(\Omega) \to \leb^{2}(\Omega)^{d}$ be the weak gradient.
  \item $\operatorname{div}\colon \leb^{2}(\Omega)^{d}\supseteq \sobh (\operatorname{div};\Omega)\to \leb^{2}(\Omega)$ be the weak divergence.
  \item $\operatorname{curl}\colon \leb^{2}(\Omega)^{3}\supseteq \sobh (\operatorname{curl};\Omega)\to \leb^{3}(\Omega)^{3}$ be the weak rotation.
\end{enumerate}
Let
\begin{equation*}
  \mathring{\operatorname{grad}} \coloneq - \operatorname{div}^{\ast},\quad
  \mathring{\operatorname{div}} \coloneq - \operatorname{grad}^{\ast}\quad\text{and}\quad
  \mathring{\operatorname{curl}} \coloneq \operatorname{curl}^{\ast}.
\end{equation*}
Then the following examples befit the setting of \cref{subsec:AbstractBD}:
\begin{enumerate}[label = (\roman*), leftmargin = 5ex]
  \item $A = \begin{psmallmatrix} 0 & \operatorname{div} \\ \operatorname{grad} & 0 \end{psmallmatrix}$ on $H = \leb^2(\Omega) \!\times\! \leb^2(\Omega)^{d}$.
  \item $A = \begin{psmallmatrix}  0 & \operatorname{div}T \operatorname{grad} \\ \iota & 0 \end{psmallmatrix}$ on $H = \leb^2(\Omega) \!\times\! \sobh^{1}(\Omega)$.\\
        Here, let $T\colon \Omega \to \mathbb{R}^{d\times d}$ be bounded, symmetric and satisfy $c^{-1} \leq T \leq c$ for some $c>0$. The minimal version of $\operatorname{div}T \operatorname{grad}$ is $\operatorname{div} T \mathring{\operatorname{grad}}$ and the minimal version of $\iota\colon \leb^{2}(\Omega)\supseteq \sobh^{1}(\Omega)\to \leb^{2}(\Omega)$ is the restriction to $\mathring{\sobh}^{1}(\Omega)$. For details on these operators cf.~\cite[sec.~6.1]{STW2022}.
  \item $A = \begin{psmallmatrix} 0 & \operatorname{curl} \\ \operatorname{curl} & 0 \end{psmallmatrix}$ on $H = \leb^2(\Omega)^{3}\!\times\! \leb^2(\Omega)^{3}$.
\end{enumerate}
More examples are possible, e.g.
\begin{enumerate}[label = (\roman*), leftmargin = 5ex]
        \setcounter{enumi}{3}
  \item $A = \begin{psmallmatrix} 0 & \operatorname{Div} \\ \operatorname{Grad} & 0 \end{psmallmatrix}$ on $H = \leb^2(\Omega)^{d} \!\times\! \leb^2(\Omega)^{d\times d}_{\mathrm{sym}}$, where $\operatorname{Div}$ is the row-wise taken weak divergence of a matrix and $\operatorname{Grad}$ is the symmetric gradient of a vector.
\end{enumerate}
However, we confine to the first three instances.

\subsection{Reminder on traces}
\label{subsec:Traces}
Before explaining the connection between abstract boundary data spaces and classical trace spaces, a brief reminder on traces is presented. Let $\Omega \subseteq \mathbb{R}^{d}$ be a bounded, open Lipschitz domain. It is well-known that
\begin{itemize}[leftmargin = 4ex]
  \item the map
        \begin{equation*}
          \mathcal{C}(\overline{\Omega}) \to \mathcal{C}(\partial\Omega), \quad \varphi \mapsto \varphi\vert_{\partial\Omega},
        \end{equation*}
        has a bounded, linear and surjective extension
        \begin{equation*}
          \gamma_{0}\colon \sobh^{1}(\Omega) \to \sobh^{\frac{1}{2}}(\partial\Omega),
        \end{equation*}
        called the {\em Dirichlet trace}.
  \item the map
        \begin{equation*}
          \mathcal{C}(\overline{\Omega})^{d} \to \mathcal{C}(\partial\Omega), \quad \varphi \mapsto \dualprod{\nu}{\varphi\vert_{\partial\Omega}}_{\mathbb{C}^{n}},
        \end{equation*}
        where $\nu$ is the unit outward normal, has a bounded, linear and surjective extension
        \begin{equation*}
          \gamma_{\nu} \colon \sobh(\operatorname{div};\Omega) \to \sobh^{-\frac{1}{2}}(\partial\Omega),
        \end{equation*}
        called the {\em normal trace}.
  \item the map
        \begin{equation*}
          \mathcal{C}(\overline{\Omega})^{3}\to \mathcal{C}(\partial\Omega)^{3}, \quad \varphi \mapsto \nu \times \varphi\vert_{\partial\Omega},
        \end{equation*}
        has a bounded, linear and surjective extension
        \begin{equation*}
          \gamma_{\tau}\colon H(\operatorname{curl};\Omega) \to \mathcal{V}_{\tau}^{\times},
        \end{equation*}
        called the {\em twisted tangential trace}. $\mathcal{V}_{\tau}^{\times}$ is the completion of
        \begin{equation*}
           \dset{ \nu \times \gamma_{0}f}{f \in \sobh^{1}(\Omega)^{3}} \eqcolon \mathrm{M}^{\times}\subseteq \leb_{\tau}^{2} (\partial\Omega) \coloneq \dset{\varphi \in \leb^2(\partial\Omega)^{3}}{\nu \times \varphi = 0}
        \end{equation*}
        w.r.t.\ the range norm
        \begin{equation*}
          \norm{\varphi}_{\mathcal{V}_{\tau}^{\times}} \coloneq \inf_{\substack{f \in \sobh^{1}(\Omega)^{3}\\ \nu \times \gamma_{0}f = \varphi}} \norm{\varphi}_{\sobh(\operatorname{curl};\Omega)}, \quad\varphi \in \mathrm{M}^{\times}.
        \end{equation*}
  \item the map
        \begin{equation*}
          \mathcal{C}(\overline{\Omega})^{3}\to \mathcal{C}(\partial\Omega)^{3}, \quad \varphi \mapsto \big(\nu \times \varphi\vert_{\partial\Omega}\big)\times \nu,
        \end{equation*}
        has a bounded, linear and surjective extension
        \begin{equation*}
          \pi_{\tau}\colon \sobh(\operatorname{curl};\Omega) \to \mathcal{V}_{\tau},
        \end{equation*}
        called the {\em tangential trace}. $\mathcal{V}_{\tau}$ is the completion of $\dset{(\nu \times \gamma_{0}g)\times \nu}{g \in \sobh^{1}(\Omega)^{3}} \eqcolon \mathrm{M} \subseteq \leb^2_{\tau}(\partial\Omega)$ w.r.t.\ the range norm
        \begin{equation*}
          \norm{\varphi}_{\mathcal{V}_{\tau}} \coloneq \inf_{\substack{g \in \sobh^{1}(\Omega)^{3}\\ (\nu \times \gamma_{0}g)\times \nu = \varphi}} \norm{g}_{\sobh(\operatorname{curl};\Omega)}, \quad\varphi \in \mathrm{M}.
        \end{equation*}
\end{itemize}
All these traces are well-defined, bounded, linear and surjective maps with a conti-nuous right inverse. For proofs of these facts regarding $\gamma_{0}$ we refer to \cite[sec.~2.4\&2.5]{Necas2012}, regarding $\gamma_{\nu}$ to \cite[app.~A]{Zwart2015} and for $\pi_{\tau}$ and $\gamma_{\tau}$ we refer to \cite[app.~B]{SkrepekWaurick2024}. To explain the connection to the concept of abstract boundary data spaces we elaborate on the identification of aforementioned boundary spaces with boundary data spaces.
\begin{itemize}[leftmargin = 4ex]
  \item For the operator $\operatorname{grad}\colon \leb^2(\Omega) \supseteq \sobh^{1}(\Omega) \to \leb^2(\Omega)^{d}$ we trivially have $\sobh^{1}(\Omega) \cong \sobh(\abs{\operatorname{grad}} + \iu)$ and we can identify
        \begin{equation*}
          \mathcal{BD}(\operatorname{grad}) \cong \sobh^{\frac{1}{2}}(\partial\Omega),
        \end{equation*}
        because the operator
        \begin{equation*}
          \gamma \coloneq \gamma_{0}\circ \pi_{\mathcal{BD}(\operatorname{grad})}^{\ast}\colon \mathcal{BD}(\operatorname{grad}) \to \sobh^{\frac{1}{2}}(\partial\Omega)
        \end{equation*}
        is a Banach space isomorphism with inverse $\gamma^{-1} = \pi_{\mathcal{BD}(\operatorname{grad})} \circ R$, where $R$ is the right inverse of $\gamma_{0}$, appealing to \cite[cor.~4.4]{Trostorff2014}.
  \item For the operator $\operatorname{div}\colon \leb^2(\Omega)^{d}\supseteq \sobh(\operatorname{div};\Omega) \to \leb^2(\Omega)$ we trivially have $\sobh(\operatorname{div};\Omega) \cong \sobh(\abs{\operatorname{div}} + \iu)$ and we can identify
        \begin{equation*}
          \mathcal{BD}(\operatorname{div}) \cong \sobh^{-\frac{1}{2}}(\partial\Omega),
        \end{equation*}
        cf.~\cite[cor.~4.6 \& prop.~4.7]{Trostorff2014}.
  \item For the operator $\operatorname{curl}\colon \leb^2(\Omega)^{3} \supseteq \sobh(\operatorname{curl};\Omega) \to \leb^2(\Omega)^{3}$ we have $\sobh(\operatorname{curl};\Omega) \cong \sobh(\abs{\operatorname{curl}} + \iu)$ and additionally, we can identify
        \begin{equation*}
          \mathcal{BD}(\operatorname{curl}) \cong \mathcal{V}_{\tau} \cong \mathcal{V}_{\tau}^{\times},
        \end{equation*}
        cf.~\cite[sec.~2.3.1]{PicardSeidlerTrostorffWaurick2016}.
\end{itemize}
We now briefly discuss the three cases outlined at the start of this section.

\subsection{$\operatorname{div}$ and $\operatorname{grad}$}\phantom{.}\\
The operators
\begin{alignat*}{3}
  \mathring{D}&\coloneq \mathring{\operatorname{div}} &&\subseteq \operatorname{div} &= - \mathring{\operatorname{grad}}^{\ast},\\
  \mathring{G}&\coloneq \mathring{\operatorname{grad}} &&\subseteq \operatorname{grad} &= - \mathring{\operatorname{div}}^{\ast},
\end{alignat*}
satisfy the assumptions of \cref{subsec:AbstractBD}. Hence, we can identify the abstract boundary data spaces
\begin{equation*}
  \mathcal{BD}(\operatorname{grad}) \cong \sobh^{\frac{1}{2}}(\partial\Omega) \quad \text{and} \quad
  \mathcal{BD}(\operatorname{div}) \cong \sobh^{-\frac{1}{2}}(\partial\Omega)
\end{equation*}
and use \cref{th:ShiftingLifting} to impose pure Dirichlet and Neumann boundary conditions on the corresponding evolutionary equation.
\begin{enumerate}[label = (\roman*), leftmargin = 4ex]
  \item A pure normal trace $\gamma_{\nu}v = d(\argdot, u_{(\argdot)},v_{(\argdot)})$ corresponds to the problem
        \begin{equation*}
          \bigg[\partial_{t}M \!+ \!N \!+ \!\begin{pmatrix} 0 & \mathring{\operatorname{div}} \\ \operatorname{grad} & 0\end{pmatrix}\!\!\bigg]
          \!\!\begin{pmatrix} u \\ r \end{pmatrix}
          \!= \!\begin{pmatrix} f \\ g \end{pmatrix}
          \!- \!\big[\partial_{t}M \!+ \!N \big]\!\!\begin{pmatrix} 0 \\ L_{\operatorname{div}}d \end{pmatrix} \!- \!\begin{pmatrix} \operatorname{div}L_{\operatorname{div}}d \\ 0 \end{pmatrix}
        \end{equation*}
        with $L_{\operatorname{div}}d$ being a lifting of boundary condition $d$.
  \item A pure Dirichlet trace $\gamma_{0}u = g(\argdot, u_{(\argdot)},v_{(\argdot)})$ corresponds to the problem
        \begin{equation*}
          \bigg[\partial_{t}M \!+ \!N \!+ \!\begin{pmatrix} 0 & \operatorname{div} \\ \mathring{\operatorname{grad}} & 0\end{pmatrix}\!\!\bigg]
          \!\!\begin{pmatrix} u \\ r \end{pmatrix}
          \!= \!\begin{pmatrix} f \\ g \end{pmatrix}
          \!- \!\big[\partial_{t}M \!+ \!N\big]\!\!\begin{pmatrix} L_{\operatorname{grad}}g \\ 0 \end{pmatrix} \!- \!\begin{pmatrix} 0 \\ \operatorname{grad}L_{\operatorname{grad}}g\end{pmatrix}
        \end{equation*}
        with $L_{\operatorname{grad}}g$ being a lifting of boundary condition $g$.
\end{enumerate}

\begin{example}[Heat equation]
  The heat equation is given as
  \begin{equation*}
    \partial_{t}u + \operatorname{div}a \operatorname{grad} u = f
  \end{equation*}
  with $a\colon \Omega \to \mathbb{R}^{d\times d}$ being bounded, measurable and positive definite, $\Re a \geq c > 0$. Superimposing a stipulated pure Neumann boundary condition
  \begin{equation*}
    \gamma_{\nu}\operatorname{grad}u = d\big(t, u_{(t)}\big),
  \end{equation*}
  the system can be formalized as
  \begin{multline*}
    \bigg[\partial_{t}\underbrace{\begin{pmatrix} 1 & 0 \\ 0 & 0 \end{pmatrix}}_{= M}
    + \begin{pmatrix} 0 & 0 \\ 0 & a^{-1} \end{pmatrix}
    + \begin{pmatrix} 0 & \mathring{\operatorname{div}} \\ \operatorname{grad} & 0\end{pmatrix}\bigg]
    \begin{pmatrix} u \\ v \end{pmatrix}\\
    = \begin{pmatrix} f \\ g \end{pmatrix}
    - \bigg[\partial_{t}\begin{pmatrix} 1 & 0 \\ 0 & 0 \end{pmatrix}
    + \begin{pmatrix} 0 & 0 \\ 0 & a^{-1} \end{pmatrix}\bigg]
    \begin{pmatrix} 0 \\ L_{\operatorname{div}}d \end{pmatrix}
    - \begin{pmatrix} \operatorname{div}L_{\operatorname{div}}d \\ 0 \end{pmatrix}.
  \end{multline*}
  Here, the block operator structure of $M$ guarantees that $(0, L_{\operatorname{div}}d) \in \operatorname{dom}(\partial_{t}M)$; in fact, $(0, L_{\operatorname{div}}d) \in \ker (M)$ and the equation simplifies to
  \begin{equation*}
    \bigg[\partial_{t}\begin{pmatrix} 1 & 0 \\ 0 & 0 \end{pmatrix}
    + \begin{pmatrix} 0 & 0 \\ 0 & a^{-1} \end{pmatrix}
    + \begin{pmatrix} 0 & \mathring{\operatorname{div}} \\ \operatorname{grad} & 0\end{pmatrix}\bigg]
    \begin{pmatrix} u \\ v \end{pmatrix}
    = \begin{pmatrix} f - \operatorname{div}L_{\operatorname{div}}d \\ g - a^{-1}L_{\operatorname{div}}d \end{pmatrix}.
  \end{equation*}
  Hence, assuming that $d$ adheres to adequate assumptions from \cref{subsubsec:LipschitzCond}, one can state a version of \cref{th:localExistDN} after passing to a formulation with suitable initial condition.\\
  For the Dirichlet boundary condition $\gamma_{0}u = g$ one would need to make sure that the lifted boundary condition satisfies $L_{\operatorname{grad}}g \in \operatorname{dom}(\partial_{t})$, which is not automatically assured by the form of the system.
\end{example}

\begin{example}[Wave equation]
  The wave equation is given as
  \begin{equation*}
    \partial_{t}^{2}u - \operatorname{div} T \operatorname{grad} u = f
  \end{equation*}
  for some $T\colon \Omega \to \mathbb{R}^{d\times d}$ that is pointwise symmetric, bounded, measurable and positive definite, $T \geq c > 0$. The wave equation is a second-order equation in time for the displacement and can be posed as a first order system for the velocity and stress. The Dirac formulation/port-Hamiltonian formulation of the wave equation, cf.~\cite{Zwart2015}, is given as the system
  \begin{equation*}
    \bigg[\partial_{t}\underbrace{\begin{pmatrix} 1 & 0 \\ 0 & T^{-1} \end{pmatrix}}_{\eqcolon M}
    +\begin{pmatrix} 0 & \operatorname{div} \\ \operatorname{grad} & g \end{pmatrix}\bigg]
    \begin{pmatrix} u \\ v \end{pmatrix}
    = \begin{pmatrix} f \\ 0 \end{pmatrix}.
  \end{equation*}
  Here, the necessary assumption for \cref{th:ShiftingLifting}, $(-L_{\operatorname{grad}}g,0) \in dom(\partial_{t}M)$ for Dirichlet boundary conditions or $(0, -L_{\operatorname{div}}d) \in \operatorname{dom}(\partial_{t}M)$ for Neumann boundary conditions is not automatically satisfied and needs to be imposed.
\end{example}

\subsection{$\operatorname{div}T \operatorname{grad}$ and $\iota$}\phantom{.}\\
Here, we let $T\colon \Omega \to \mathbb{R}^{d\times d}$ be pointwise symmetric, bounded, measurable and positive definite, $T \geq c > 0$. Additionally let
\begin{equation*}
  \iota \colon \leb^2(\Omega) \supseteq \sobh^{1}(\Omega) \to \leb^2(\Omega), \quad \varphi \mapsto \varphi
\end{equation*}
denote the (unbounded) inclusion of $\sobh^{1}(\Omega)$ into $\leb^2(\Omega)$. We understand
\begin{align*}
  \operatorname{div}T \operatorname{grad} &\colon \sobh^{1}(\Omega)\supseteq \operatorname{dom}(\operatorname{div}T \operatorname{grad}) \to \leb^2(\Omega),\quad \varphi \mapsto \operatorname{div}T \operatorname{grad}\varphi\\
  \intertext{with domain}
  \operatorname{dom}(\operatorname{div}T \operatorname{grad}) &\coloneq \dset{\varphi \in \sobh^{1}(\Omega)}{T \operatorname{grad}\varphi \in \sobh(\operatorname{div};\Omega)}.
\end{align*}
Taking a simplified version of the system introduced in \cite{Aigner2026}, we define
\begin{equation*}
  A = \begin{pmatrix} 0 & -\operatorname{div}T \operatorname{grad} \\ \iota & 0 \end{pmatrix}
\end{equation*}
and observe that $\bigl((\sobh^{\frac{1}{2}}(\partial\Omega)', \sobh^{\frac{1}{2}}(\partial\Omega)), \gamma_{0} + \gamma_{\nu}T \operatorname{grad}, \gamma_{0}\bigr)$ forms a boundary triple for $A$ on the state space $H = \sobh^{1}(\Omega) \!\times\! \leb^2(\Omega)$,\footnote{In \cite{Aigner2026}, boundary triples were defined for adjoints of skew-symmetric operators and the trace space is not identified with its dual space, cf.~\cite[def.~2.7]{Aigner2026}.} where $\sobh^{1}(\Omega)$ is equipped with the inner product
\begin{equation*}
  \dualprod{x}{y}_{\sobh^{1}(\Omega)} \coloneq \dualprod{x}{y}_{\leb^2(\Omega)} + \dualprod{\gamma_{0}x}{\gamma_{0}y}_{\leb^2(\partial\Omega)}.
\end{equation*}
Hence, with this formulation, via shifting and lifting one can natively formulate Dirichlet boundary or Robin boundary conditions. In the latter case one uses the ``Robin trace''
\begin{equation*}
  \gamma_{R}\coloneq \gamma_{0} + \gamma_{\nu}T \operatorname{grad} \colon \operatorname{dom}(\operatorname{div}T \operatorname{grad}) \to \sobh^{-\frac{1}{2}}(\partial\Omega)
\end{equation*}
and an adequate lifting thereof, i.e.\ a continuous right-inverse $R\colon \sobh^{-\frac{1}{2}}(\partial\Omega) \to \operatorname{dom}(\operatorname{div} T \operatorname{grad})$, to obtain the following version of the wave equation
\begin{equation*}
  \left[\partial_{t}\begin{pmatrix} 1 & 0 \\ 0 & 1 \end{pmatrix}
    + \begin{pmatrix} 0 & - \operatorname{div} T \mathring{\operatorname{grad}} \\ \iota & 0 \end{pmatrix}\right]
  \begin{pmatrix} u \\ v \end{pmatrix}
    = \begin{pmatrix} f \\ g \end{pmatrix}
    - \partial_{t}\begin{pmatrix} 0 \\ Rr \end{pmatrix}
    + \begin{pmatrix} \operatorname{div}T \operatorname{grad} Rr \\ 0 \end{pmatrix},
\end{equation*}
where the boundary condition takes the form
\begin{equation*}
  \gamma_{0}v(t) + \gamma_{0}u(t) + \gamma_{\nu}T \operatorname{grad}u(t) = r\big(t, u_{(t)},v_{(t)}\big).
\end{equation*}
Note that, as already showcased in \cite[sec.~6.5]{AignerWaurick2026}, for the wave equation, the structural requirement imposed on the right-hand side of the shifted and lifted version to carry state-dependent delay of the form $F(t,u_{(t)},v_{(t)}) = \widetilde{F}\big(t,(I_{\rho}u)_{(t)}, (I_{\rho}v)_{(t)}\big)$ can always be assured if $F$ only depends on the history of the displacement $u_{(t)}$, using the fact that $v_{t} = u$, i.e.\ $F(t,u_{(t)}) = \widetilde{F}\big(t, (I_{\rho}v)_{(t)}\big)$.

\subsection{$\operatorname{curl}$ and $\operatorname{curl}$}\phantom{.}\\
Here, we define the operators
\begin{alignat*}{3}
  \mathring{D} &\coloneq \phantom{-}\mathring{\operatorname{curl}} &\subseteq \phantom{-}\operatorname{curl} &= \phantom{-}\mathring{\operatorname{curl}}^{\ast},\\
  \mathring{G} &\coloneq -\mathring{\operatorname{curl}} &\subseteq -\operatorname{curl} &= -\mathring{\operatorname{curl}}^{\ast},
\end{alignat*}
which satisfy the assumptions of \cref{subsec:AbstractBD} and we can pose pure Dirichlet and Neumann boundary conditions associated to the operator
\begin{equation*}
  A = \begin{pmatrix} 0 & -\operatorname{curl} \\ \operatorname{curl} & 0 \end{pmatrix}.
\end{equation*}
Let $\epsilon, \mu \colon \Omega \to \mathbb{R}^{3\times 3}$ be bounded, measurable, symmetric matrix valued functions denoting the eletric conductivity and the magnetic permeability respectively. Let $\sigma \colon \Omega \to \mathbb{R}^{3\times 3}$ be bounded and measurable and denote the electric conductivity. Let the assumptions
\begin{equation*}
  \epsilon + \Re \sigma \geq c \quad\text{and}\quad \mu \geq c
\end{equation*}
hold for some $c>0$. Then Maxwell's equations can be posed as the evolutionary equation
\begin{equation*}
  \left[\partial_{t}\!\begin{pmatrix} \epsilon & 0 \\ 0 & \mu \end{pmatrix}
  + \begin{pmatrix} \sigma & 0 \\ 0 & 0 \end{pmatrix}
  + \begin{pmatrix} 0 & -\operatorname{curl} \\ \operatorname{curl} & 0 \end{pmatrix}\right]
    \!\begin{pmatrix} E \\ H \end{pmatrix}
  = \begin{pmatrix} f \\ g \end{pmatrix},
\end{equation*}
cf.~\cite[p.~94ff]{STW2022}. By means of suitable continuous liftings $L \colon \mathcal{V}_{\tau}\to \sobh(\operatorname{curl};\Omega)$ or $L\colon \mathcal{V}_{\tau}^{\times}\to \sobh(\operatorname{curl};\Omega)$ of the tangential trace $\pi_{\tau}$ or the twisted tangential trace $\gamma_{\tau}$, the following boundary conditions can be accommodated:
\begin{itemize}[leftmargin = 4ex]
  \item $\gamma_{\tau}E = h(t, E_{(t)}, H_{(t)})$ (perfect conduction for $h = 0$) resulting in
        \begin{equation*}
          \left[\partial_{t}\!\begin{pmatrix} \epsilon & 0 \\ 0 & \mu \end{pmatrix}
            + \begin{pmatrix} \sigma & 0 \\ 0 & 0 \end{pmatrix}
            + \begin{pmatrix} 0 & -\operatorname{curl} \\ \mathring{\operatorname{curl}} & 0 \end{pmatrix}\right]
          \!\begin{pmatrix} E \\ H \end{pmatrix}
          = \begin{pmatrix} f - \sigma Lh\\ g - \operatorname{curl}Lh \end{pmatrix}
          - \begin{pmatrix} \partial_{t}\epsilon Lh \\ 0 \end{pmatrix}.
        \end{equation*}
  \item $\pi_{\tau}H = h(t, E_{(t)},H_{(t)})$ resulting in
        \begin{equation*}
          \left[\partial_{t}\!\begin{pmatrix} \epsilon & 0 \\ 0 & \mu \end{pmatrix}
            + \begin{pmatrix} \sigma & 0 \\ 0 & 0 \end{pmatrix}
            + \begin{pmatrix} 0 & -\mathring{\operatorname{curl}} \\ \operatorname{curl} & 0 \end{pmatrix}\right]
          \!\begin{pmatrix} E \\ H \end{pmatrix}
          = \begin{pmatrix} f + \operatorname{curl}Lh \\ g \end{pmatrix}
          - \begin{pmatrix} 0 \\ \partial_{t}\mu Lh \end{pmatrix}.
        \end{equation*}
\end{itemize}
Of course one needs to assure that the domain conditions $Lh \in \operatorname{dom}(\partial_{t}\epsilon)$ or $Lh \in \operatorname{dom}(\partial_{t}\mu)$ are met respectively. Local well-posedness of both formulations is then ascertained under the assumptions of \cref{th:localExistDN} after passing to a formulation with suitable initial conditions.

\section{Initial conditions}
\label{app:InitialValues}
As cursorily explained in \cref{subsubsec:InitialValues}, the formulation of initial value problems for evolutionary equations is not immediately obvious. A detailed account of how to pose initial value problems for evolutionary equations can be found in \cite[ch.~3]{Trostorff2018}, cf.~also \cite[app.~B]{AignerWaurick2026} in the context of state-dependent delay. In general, the problem
\begin{equation*}
  \left\{
    \begin{aligned}
      \big[\partial_{t}M + N + A\big]u &= F(\argdot, u_{(\argdot)}), \quad t>0,\\
      u_{(0)} &= \Phi,
    \end{aligned}
  \right.
\end{equation*}
can be posed in an extrapolation space $\sobh^{-1}_{\rho}(\mathbb{R};H)\coloneq \sobh^{1}_{\rho}(\mathbb{R};H)'$ in the form
\begin{equation}
  \label{eq:EvoleqInitial}
  \big[\partial_{t} M + N + A\big]v = f + F(\argdot, (v + Z)_{(\argdot)}),
\end{equation}
where $Z$ is a continuation of the prehistory $\Phi$, $f \in \sobh^{-1}_{\rho}(\mathbb{R};H)$ and the unknown $v$ will have support only on $[0,\infty)$. For details we again refer to \cite[ch.~3]{Trostorff2018}. To make use of the theory developed in \cref{sec:EvolEq} and \cite{AignerWaurick2026}, one needs $f \in \leb^{2}_{\rho}(\mathbb{R};H)$ though. It is not clear how and when one can arrive at a suitable $f$ in the general case, because $M$ and $N$ might include highly nonlocal operators, such as convolutions. For the strategy for local operators we refer to \cite[sec.~5]{AignerWaurick2026}. Here, we focus on the matter of extending the prehistory $\Phi$, which is required for both the formulation of \cref{eq:EvoleqInitial}, as well as the formulation of the consistency conditions mandated by \cref{th:localExistIns} and \cref{th:localExistOut} in the case of nonautonomous delay of the form $t \mapsto t - \tau(t)$ for a delay functional $\tau\colon \mathbb{R}\to [0,h]$ adhering to \cref{ass:Tau}. We additionally assume:
\begin{enumerate}[label = (\roman*), leftmargin = 5ex]
  \item\label{it:Regularity} $\Phi \in \sobw^{1,\infty}(-h,0;H)$ and $\Phi$ is continuously differentiable at $t = 0$.
  \item\label{it:LeftBoundary} $\Phi(-h) = 0$.
\end{enumerate}

\begin{remark}
  Assumption \ref{it:Regularity} is mandated by the phenomenon that without a Lipschitz-continuous prehistory, unique solvability of a differential equation with state-dependent delay is not guaranteed, cf.~\cite[ex.~4.3]{Waurick2023}. It is well understood that the role of $t=0$ is special and we want to make sure that $\Phi$ can be evaluated at $t=0$, hence the second part of the assumption.\\
  \Cref{it:LeftBoundary} seems like a restriction at first glance, but it really is not: If $\Phi(-h)\neq 0$, the backward time horizon can be increased to $\tilde{h} > h$ and $(-h,\Phi(-h))$ and $(-\tilde{h},0)$ can be connected as smoothly as possible (in the sense that the first point is the endpoint of the graph of the function $\Phi$ and the second one is the endpoint of $(-\infty,-\tilde{h}] \!\times\! \{0\}$). This backward continuation $\widetilde{\Phi}$ of the prehistory $\Phi$ can take the place of $\Phi$ without issues, since the function $F$ does not ``see'' the backward continuation at all.\\
  We additionally repeat that \cref{ass:Tau} imply that $t\mapsto t - \tau(t)$ is injective. Hence, there exists a unique $t_{0}>0$ such that $t_{0} = \tau(t_{0})$.
\end{remark}

Employing the same strategy as in \cite{AignerWaurick2026}, we can define the affine extension of $\Phi$ by
\begin{equation*}
  Z\colon \mathbb{R}\to H, \quad t\mapsto \begin{cases}
    \Phi(0) + t \Phi'(0) &t\geq 0,\\
    \Phi(t) &t<0.
  \end{cases}
\end{equation*}
Clearly, $Z \in \sobh^{1}_{\rho}(\mathbb{R};H)$. Applying the shift $S_{\tau}$ from \cref{def:Shift} yields
\begin{equation*}
  S_{\tau}Z \colon \mathbb{R}\to H,\quad t\mapsto \begin{cases}
    \Phi(0) + \big(t-\tau(t)\big)\Phi'(0) & t\geq t_{0},\\
    \Phi(t-\tau(t)) & t<t_{0}.
  \end{cases}
\end{equation*}
If one wishes to solve a second order (in time) problem, such as the wave equation, the prehistory needs to be smoother and we require:
\begin{enumerate}[label = (\roman*), leftmargin = 5ex]
  \item $\Phi \in \sobw^{2,\infty}(-h,0;H)$ and $\Phi'$ is continuously differentiable at $t = 0$.
  \item $\tau \in \sobw^{2,\infty}(\mathbb{R})$.
\end{enumerate}
To illustrate the point and elaborate on the formulation of the consistency condition, consider the wave equation with nonautonomous delay in the velocity as in \cref{subsubsec:Wave1}, i.e.
\begin{equation*}
  \bigg[\partial_{t}\begin{pmatrix} T^{-1} & 0 \\ 0 & 1 \end{pmatrix}
  +\begin{pmatrix} 0 & 0 \\ 0 & b_{1} + b_{2}S_{\tau} \end{pmatrix}
  - \begin{pmatrix} 0 & \mathring{\operatorname{grad}} \\ \operatorname{div} & 0 \end{pmatrix}\bigg]
\begin{pmatrix} u \\ v \end{pmatrix}
= \begin{pmatrix} 0 \\ F(\argdot, v_{(\argdot)}) \end{pmatrix}.
\end{equation*}
A reasonable continuation of the prehistory $\Phi$ is
\begin{equation*}
  Z\colon \mathbb{R}\to H, \quad t\mapsto \begin{cases}
    \Phi(0) + t \Phi'(0) + \tfrac{1}{2}t^{2}\Phi''(0) &t\geq 0,\\
    \Phi(t) &t<0.
  \end{cases}
\end{equation*}
After transitioning to a formulation for initial conditions, the consistency condition for the second component of the state reads
\begin{equation*}
  \Phi''(0) - \operatorname{div}T \operatorname{grad} \Phi(0) + b_{1}(0)\Phi'(0) + b_{2}(0)\big(1-\tau'(0)\big)\Phi'(-\tau(0)) = F(0,\Phi).
\end{equation*}
Of course this condition mandates $\Phi(0) \in \operatorname{dom}(\operatorname{div}T \operatorname{grad})$.

\

In the case of boundary value problems, cf.~\cref{sec:BdyDelay}, there is the additional complication of the lifting $L$ from the boundary space to the state space. We only exemplify this in the case of \cref{ex:Wave}, where a prehistory $\Phi$ with values in the boundary space $\leb^2(\partial\Omega)$ is required. Because of block operator structure, it suffices to consider only the third component of \cref{eq:WaveComp} to obtain the consistency condition
\begin{align*}
  \big[(b_{1}+b_{2}S_{\tau})Z - \gamma_{\nu}Z\big](0) &= h(0,u_{(0)},v_{(0)},w_{(0)}),\\
  \intertext{which translates to}
  b_{1}(0)\Phi(0) + b_{2}(0)\Phi(-\tau(0)) - \gamma_{\nu}u(0) &= h(0,u_{(0)},v_{(0)},\Phi).
\end{align*}
Note that because our solution theory provides solutions in $\sobh^{1}_{\rho}$, evaluating $u$ at $t=0$ is not a problem. Under the assumption $u(0) \in \sobh(\operatorname{div};\Omega)$ we obtain
\begin{equation*}
  \big(b_{1}(0)- \gamma_{\nu})\Phi(0) + b_{2}(0)\Phi\big(-\tau(0)\big) = h(0,\Phi),
\end{equation*}
provided that $h$ has no further dependence on histories.

\section{Further applications}
\label{app:Applications}

In the final appendix (some) additional details regarding possible other formulations of heat and wave equations by means of the formalisms from \cref{subsec:ExtendedState} are provided; underlining the versatility of the presented approach.

\subsection{Heat equation}
Making use of formalism \eqref{it:DivGradVersion1}, the corresponding formulation of the heat equation reads
\begin{equation*}
  \left[\partial_{t}\begin{pmatrix} 1 & 0 & 0 \\ 0 & 0 & 0 \\ 0 & 0 & m \end{pmatrix}
    + \begin{pmatrix} 0 & 0 & 0 \\ 0 & a^{-1} & 0 \\ 0 & 0 & n \end{pmatrix}
    + \begin{pmatrix} 0 & - \begin{pmatrix} \operatorname{grad} \\ \gamma_{0} \end{pmatrix}^{\!\!\ast}\\
    \begin{pmatrix} \operatorname{grad} \\ \gamma_{0} \end{pmatrix} & 0
    \end{pmatrix}\right]
    \begin{pmatrix} u \\ v \\ w \end{pmatrix}
    = \begin{pmatrix} f \\ g \\ h \end{pmatrix},
\end{equation*}
where $m,n \in \mathcal{L}\big(\leb^{2}_{\rho}(\mathbb{R};\leb^2(\partial\Omega))\big)$. The full set of equations encoded by the system is
\begin{align*}
  \partial_{t}u + \operatorname{div}v &= f,\\
  a^{-1}v + \operatorname{grad}u &= g,\\
  \partial_{t}mw + nw + \gamma_{0}u &= h,\\
  -\gamma_{\nu}v &= w,
\end{align*}
where the last equation is the boundary condition encoded in $A$. In other words, this system corresponds to the heat equation with the boundary condition
\begin{equation*}
  -\partial_{t}m\gamma_{\nu}v - n\gamma_{\nu}v + \gamma_{0}u = h.
\end{equation*}
For regular enough solutions, i.e. $(u,v,w) \in \operatorname{dom}(\partial_{t}M + N + A)$, we can appeal to the differential equation $a^{-1}v = g - \operatorname{grad}u$ and substitute $v$ for the case $g=0$ to obtain
\begin{equation*}
  \partial_{t}m\gamma_{\nu}a\operatorname{grad}u + n \gamma_{\nu}a\operatorname{grad}u + \gamma_{0}u = h.
\end{equation*}
The choice $m = 0$ leads to the Robin boundary condition
\begin{equation*}
  n \gamma_{\nu}a\operatorname{grad}u + \gamma_{0}u = h\big(\argdot, u_{(\argdot)}\big),
\end{equation*}
which includes a possible nonautonomous delay in the Neumann trace.

\

As the concluding example for the heat equation, we want to recall the formulation of Wentzell--Robin boundary conditions for the heat equation in \cite[sec.~2.2]{PicardSeidlerTrostorffWaurick2016}. There, one uses formalism \eqref{it:Wentzell} to pose the heat equation as the system
\begin{multline*}
  \left[\partial_{t}\underbrace{\begin{pmatrix} m_{1} & 0 & 0 & 0 \\ 0 & m_{2} & 0 & 0 \\ 0 & 0 & m_{3} & 0 \\ 0 & 0 & 0 & m_{4}\end{pmatrix}}_{\eqcolon M}
    + \underbrace{\begin{pmatrix} n_{1} & 0 & 0 & 0 \\ 0 & n_{2} & 0 & 0 \\ 0 & 0 & n_{3} & 0 \\ 0 & 0 & 0 & n_{4}\end{pmatrix}}_{\eqcolon N}\right.\\
    \left.- \begin{pmatrix}
    \begin{pmatrix} 0 \end{pmatrix} &
    - \begin{pmatrix} \operatorname{grad} \\ \gamma_{0} \\ \operatorname{grad}_{\partial\Omega}\gamma_{0} \end{pmatrix}^{\!\!\ast} \\
    \begin{pmatrix} \operatorname{grad} \\ \gamma_{0} \\ \operatorname{grad}_{\partial\Omega}\gamma_{0} \end{pmatrix} &
    \begin{pmatrix} 0 & 0 & 0 \\ 0 & 0 & 0 \\ 0 & 0 & 0 \end{pmatrix}
  \end{pmatrix}\right]
  \begin{pmatrix} p \\ v \\ \eta_{1} \\ \eta_{2} \end{pmatrix}
  = \begin{pmatrix} f \\ g \\ h_{1} \\ h_{2} \end{pmatrix}
\end{multline*}
in $\leb^{2}_{\rho}\big(\mathbb{R}; \leb^2(\Omega) \!\times\! \leb^2(\Omega) \!\times\! \leb^2(\Omega) \!\times\! \leb^2_{\tau}(\partial\Omega)\big)$ for sufficiently large $\rho$, where $M$ and $N$ satisfy the assumptions of \cref{th:Picard}.\\
The choices $m_{1} = 1$, $m_{2} = T^{-1}$, $m_{4} = 0$ and $n_{1}=0$, $n_{2}=0$, $n_{4}=1$ lead to the following system of equations:
\begin{align*}
  \partial_{t}p - \operatorname{div}v &= f,\\
  \partial_{t}T^{-1}v - \operatorname{grad}p &= g,\\
  \partial_{t}m_{3}\eta_{1} + n_{3}\eta_{1} - \gamma_{0}p &= h_{1},\\
  \eta_{2} - \operatorname{grad}_{\partial\Omega}\gamma_{0}p &= h_{2},\\
  \gamma_{\nu}v + \eta_{1} &= - \operatorname{grad}_{\partial\Omega}^{\diamond}\eta_{2},
\end{align*}
where again the last equation is the boundary condition encoded into $A$. For strong solutions one can now plug in both the fourth and fifth equation into the third one, resulting in
\begin{equation*}
  \big(\partial_{t}m_{3} + n_{3}\big)\big(- \operatorname{grad}_{\partial\Omega}^{\diamond}(\operatorname{grad}_{\partial\Omega}\gamma_{0}p + h_{2}) - \gamma_{\nu}v\big) -\gamma_{0}p = h_{1}.
\end{equation*}
With the choices $h_{2}=0$ and $m_{3}=0$ one obtains
\begin{equation*}
  n_{3}\operatorname{grad}_{\partial\Omega}^{\diamond}\operatorname{grad}_{\partial\Omega}\gamma_{0}p + n_{3}\gamma_{\nu}v + \gamma_{0}p = -h_{1}.
\end{equation*}
It can be shown that $\operatorname{grad}_{\partial\Omega}^{\diamond} \operatorname{grad}_{\partial\Omega} \gamma_{0} = \Delta_{\mathrm{LB}}$, the Laplace--Beltrami operator. This boundary condition can therefore be viewed as a Wentzell--Robin boundary condition.

\subsection{Wave equation}
We first briefly spell out the versions of the wave equation obtained via
\begin{itemize}[leftmargin = 4ex]
  \item formalism \eqref{it:DivGradVersion1}. Here we obtain the model
        \begin{equation}
          \label{eq:Wave2}
          \left[\!\partial_{t}\!\begin{pmatrix} 1 & 0 & 0 \\ 0 & T^{-1} & 0 \\ 0 & 0 & m \end{pmatrix}
            \!\!+ \!\!\begin{pmatrix} 0 & 0 & 0 \\ 0 & 0 & 0 \\ 0 & 0 & n \end{pmatrix}
            \!\!- \!\!\begin{pmatrix} 0 & - \begin{pmatrix} \operatorname{grad} \\ \gamma_{0} \end{pmatrix}^{\!\!\ast}\\
              \!\begin{pmatrix} \operatorname{grad} \\ \gamma_{0} \end{pmatrix} & 0
            \end{pmatrix}\!\right]
          \!\!\begin{pmatrix} u \\ v \\ w \end{pmatrix}
          \!= \!\begin{pmatrix} f \\ g \\ h \end{pmatrix},
        \end{equation}
        where $m,n \in \mathcal{L}\big(\leb^{2}_{\rho}(\mathbb{R};\leb^{2}(\partial\Omega))\big)$. Combining both the boundary condition contained in the system as well as the boundary condition encoded in $A$, we obtain
        \begin{equation*}
          \partial_{t}m\gamma_{\nu}v + n\gamma_{\nu}v + \gamma_{0}u = -h.
        \end{equation*}
        For $m=1$, $n=0$ and $g = 0$ for strong solutions we obtain Robin boundary conditions without delay,
        \begin{equation*}
          \gamma_{\nu}T \operatorname{grad}u + \gamma_{0}u = -h.
        \end{equation*}
  \item formalism \eqref{it:DivGradVersion2}. Here we obtain the model
        \begin{equation}
          \label{eq:Wave3}
          \left[\partial_{t}\!\begin{pmatrix} T^{-1} & 0 & 0 \\ 0 & 1 & 0 \\ 0 & 0 & m \end{pmatrix}
            \!\!+ \!\!\begin{pmatrix} 0 & 0 & 0 \\ 0 & 0 & 0 \\ 0 & 0 & n \end{pmatrix}\\
            \!\!- \!\!\begin{pmatrix} 0 & - \begin{pmatrix} \operatorname{div} \\ \gamma_{\nu} \end{pmatrix}^{\!\!\ast}\\
              \begin{pmatrix} \operatorname{div} \\ \gamma_{\nu} \end{pmatrix} & 0
            \end{pmatrix}\right]
          \!\!\begin{pmatrix} u \\ v \\ w \end{pmatrix}
          \!= \!\begin{pmatrix} f \\ g \\ h \end{pmatrix},
        \end{equation}
        where $m,n \in \mathcal{L}\big(\leb^{2}_{\rho}(\mathbb{R};\leb^{2}(\partial\Omega))\big)$. The full system of equations contained in \eqref{eq:WaveComp} reads
        \begin{align*}
          \partial_{t}T^{-1}u - \operatorname{grad}v &= f,\\
          \partial_{t}v - \operatorname{div}u &= g,\\
          \partial_{t}mw + nw - \gamma_{\nu}w &= h,\\
          w + \gamma_{0}v &= 0,
        \end{align*}
        where the last equation corresponds to the boundary condition encoded in the operator $A$. Combining the last two equations results in the boundary condition
        \begin{equation*}
          \partial_{t}m\gamma_{0}v + n\gamma_{0}v + \gamma_{\nu}u = -h,
        \end{equation*}
        which corresponds to a Robin boundary condition for the choice $m=0$.
  \item formalism \eqref{it:LaplaceVersion2}. Here, the wave equation is formulated as the system
        \begin{equation}
          \label{eq:Wave4}
          \left[\partial_{t}\!\begin{pmatrix} 1 & 0 & 0 \\ 0 & 1 & 0 \\ 0 & 0 & m \end{pmatrix}
            \!+ \!\begin{pmatrix} 0 & 0 & 0 \\ 0 & 0 & 0 \\ 0 & 0 & n \end{pmatrix}\\
            \!- \!\begin{pmatrix} 0 & - \begin{pmatrix} \iota \\ \gamma_{0} \end{pmatrix}^{\!\!\ast}\\
              \begin{pmatrix} \iota \\ \gamma_{0} \end{pmatrix} & 0
            \end{pmatrix}\!\right]
          \!\begin{pmatrix} u \\ v \\ w \end{pmatrix}
          \!= \!\begin{pmatrix} f \\ g \\ h \end{pmatrix}
        \end{equation}
        with $m,n \in \mathcal{L}\big(\leb^2_{\rho}(\mathbb{R};\leb^2(\partial\Omega))\big)$. The system reads
        \begin{align*}
          \partial_{t}u - \operatorname{div}T \operatorname{grad} v &=f,\\
          \partial_{t}v - u &= g,\\
          \partial_{t}mw + nw -\gamma_{0}u &= h,\\
          w + \gamma_{n}v &= 0,
        \end{align*}
        where the last equation describes the boundary condition encoded into the spatial operator $A$. Combining the latter two equations produces the boundary condition carried by the system,
        \begin{equation*}
          \partial_{t}m\gamma_{n}v + n\gamma_{n}v + \gamma_{0}u = -h.
        \end{equation*}
        For $g = 0$ and autonomous $m$, one obtains generalized Robin boundary conditions
        \begin{equation*}
          m \gamma_{n}u + n\gamma_{n}v + \gamma_{0}u = -h.
        \end{equation*}
\end{itemize}

\bibliographystyle{abbrvurl}
\bibliography{references}

@article{AignerWaurick2026,
	author = {Bernhard Aigner and Marcus Waurick},
	doi = {https://doi.org/10.1016/j.na.2026.114088},
	issn = {0362-546X},
	journal = {Nonlinear Analysis},
	pages = {114088},
	title = {Evolutionary equations with state-dependent delay},
	volume = {269},
	year = {2026},
    }

@book{STW2022,
 author = {Seifert, Christian and Trostorff, Sascha and Waurick, Marcus},
 title = {Evolutionary equations. {Picard}'s theorem for partial differential equations, and applications},
 fseries = {Operator Theory: Advances and Applications},
 series = {Oper. Theory: Adv. Appl.},
 issn = {0255-0156},
 volume = {287},
 isbn = {978-3-030-89396-5; 978-3-030-89399-6; 978-3-030-89397-2},
 year = {2022},
 publisher = {Cham: Birkh{\"a}user},
 language = {English},
 doi = {10.1007/978-3-030-89397-2},
 zbMATH = {7414827},
 Zbl = {1497.35008}
}

@article{PicardSeidlerTrostorffWaurick2016,
 author = {Picard, Rainer and Seidler, Stefan and Trostorff, Sascha and Waurick, Marcus},
 title = {On abstract grad-div systems},
 fjournal = {Journal of Differential Equations},
 journal = {J. Differ. Equations},
 issn = {0022-0396},
 volume = {260},
 number = {6},
 pages = {4888--4917},
 year = {2016},
 language = {English},
 doi = {10.1016/j.jde.2015.11.033},
 zbMATH = {6535778},
 Zbl = {1353.35119}
}

@article{NicaisePignottiValein2011,
 author = {Nicaise, Serge and Pignotti, Cristina and Valein, Julie},
 title = {Exponential stability of the wave equation with boundary time-varying delay},
 fjournal = {Discrete and Continuous Dynamical Systems. Series S},
 journal = {Discrete Contin. Dyn. Syst., Ser. S},
 issn = {1937-1632},
 volume = {4},
 number = {3},
 pages = {693--722},
 year = {2011},
 language = {English},
 doi = {10.3934/dcdss.2011.4.693},
 zbMATH = {5869376},
 Zbl = {1215.35030}
}

@article{TrostorffWaurick2021,
 author = {Trostorff, Sascha and Waurick, Marcus},
 title = {Maximal regularity for non-autonomous evolutionary equations},
 fjournal = {Integral Equations and Operator Theory},
 journal = {Integral Equations Oper. Theory},
 issn = {0378-620X},
 volume = {93},
 number = {3},
 pages = {37},
 note = {Id/No 30},
 year = {2021},
 language = {English},
 doi = {10.1007/s00020-021-02645-5},
 zbMATH = {7367378},
 Zbl = {1467.35081}
}

@article{Spek2020,
	author = {Spek, Len and Kuznetsov, Yuri A. and van Gils, Stephan A.},
	date = {2020/12/09},
	doi = {10.1186/s13408-020-00098-5},
	id = {Spek2020},
	isbn = {2190-8567},
	journal = {The Journal of Mathematical Neuroscience},
	number = {1},
	pages = {21},
	title = {Neural field models with transmission delays and diffusion},
	volume = {10},
	year = {2020}
    }

@misc{Gantouh2026-2,
      title={A spectral-based {ISS} small-gain theorem for boundary control systems with infinite couplings},
      author={Yassine El Gantouh and Jun Zheng and Guchuan Zhu and Dingshi Li},
      year={2026},
      eprint={2604.11031},
      archivePrefix={arXiv},
      primaryClass={math.OC}
}

@article{Gantouh2026-1,
	author = {Yassine El Gantouh and Yang Liu},
	doi = {https://doi.org/10.1016/j.sysconle.2025.106307},
	issn = {0167-6911},
	journal = {Systems \& Control Letters},
	pages = {106307},
	title = {Well-posedness and stability of boundary delay equations},
	volume = {208},
	year = {2026}
}

@article{SilgaBayili2021,
	author = {Roland Silga and Gilbert Bayili},
    title = {Polynomial stability of the wave equation with distributed delay term on the dynamical control},
	doi = {10.1515/msds-2020-0134},
	journal = {Nonautonomous Dynamical Systems},
	lastchecked = {2026-05-27},
	number = {1},
	pages = {207--227},
	url = {https://doi.org/10.1515/msds-2020-0134},
	volume = {8},
	year = {2021}
    }

@article{SilgaBayiliZabsonre2022,
author = {Roland Silga and Gilbert Bayili and Issa Zabsonre},
doi = {https://doi.org/10.28919/ejma.2022.2.15},
journal = {Eur. J. Math. Appl.},
url = {https://doi.org/10.28919/ejma.2022.2.15},
volume = {2},
title = {Stabilization of the {S}chr\"{o}dinger equation with distributed delay in boundary feedback},
year = {2022}
}

@article{Trostorff2014,
	author = {Sascha Trostorff},
	doi = {https://doi.org/10.1016/j.jfa.2014.08.009},
	issn = {0022-1236},
	journal = {Journal of Functional Analysis},
	number = {8},
	pages = {2787-2822},
	title = {A characterization of boundary conditions yielding maximal monotone operators},
	volume = {267},
	year = {2014}
    }

@article{PicardTrostorffWaurick2016,
	author = {Picard, Rainer and Trostorff, Sascha and Waurick, Marcus},
	doi = {https://doi.org/10.1093/imamci/dnu035},
	issn = {0265-0754},
	journal = {IMA Journal of Mathematical Control and Information},
	month = {06},
	number = {2},
	pages = {257-291},
	title = {On a comprehensive class of linear control problems},
	volume = {33},
	year = {2016}
    }

@book{PicardMcGhee2011,
	address = {Berlin, New York},
	author = {Rainer Picard and Des McGhee},
	doi = {https://doi.org/10.1515/9783110250275},
	isbn = {9783110250275},
	lastchecked = {2026-05-30},
	publisher = {De Gruyter},
	title = {Partial {D}ifferential {E}quations --- {A} unified {H}ilbert space approach},
	year = {2011}
    }

@book{JacobZwart2012,
 author = {Jacob, Birgit and Zwart, Hans J.},
 title = {Linear port-{Hamiltonian} systems on infinite-dimensional spaces.},
 fseries = {Operator Theory: Advances and Applications},
 series = {Oper. Theory: Adv. Appl.},
 issn = {0255-0156},
 volume = {223},
 isbn = {978-3-0348-0398-4; 978-3-0348-0399-1},
 year = {2012},
 publisher = {Basel: Birkh{\"a}user},
 language = {English},
 doi = {10.1007/978-3-0348-0399-1},
 zbMATH = {6023275},
 Zbl = {1254.93002}
}

@phdthesis{Trostorff2018,
    author      = {Sascha Trostorff},
    title       = {Exponential Stability and Initial Value Problems for Evolutionary Equations},
    type        = {Habilitation thesis},
    school      = {University of Technology Dresden},
    year        = {2018},
    note        = {\url{https://nbn-resolving.org/urn:nbn:de:bsz:14-qucosa-236494}},
}

@article{Waurick2023,
 author = {Frohberg, Johanna and Waurick, Marcus},
 title = {State-dependent delay differential equations on {$\mathrm{H}^1$}},
 fjournal = {Journal of Differential Equations},
 journal = {J. Differ. Equations},
 issn = {0022-0396},
 volume = {410},
 pages = {737--771},
 year = {2024},
 language = {English},
 doi = {10.1016/j.jde.2024.08.009},
 zbMATH = {7923719},
 Zbl = {1555.34073}
}

@misc{Aigner2024,
      title={A quick guide to ordinary state-dependent delay differential equations},
      author={Bernhard Aigner and Marcus Waurick},
      year={2024},
      eprint={2410.20613},
      archivePrefix={arXiv},
      primaryClass={math.CA},
}

@article{Aigner2026,
	author = {Bernhard Aigner and Nathanael Skrepek},
	doi = {https://doi.org/10.1016/j.jmaa.2025.130241},
	issn = {0022-247X},
	journal = {Journal of Mathematical Analysis and Applications},
	number = {1},
	pages = {130241},
	title = {Well-posedness and stability of the {L}agrange representation of the n-{D} wave equation via boundary triples},
	volume = {557},
	year = {2026}
    }

@article {SkrepekWaurick2024,
    AUTHOR = {Skrepek, Nathanael and Waurick, Marcus},
     TITLE = {Semi-uniform stabilization of anisotropic {M}axwell's equations via boundary feedback on split boundary},
   JOURNAL = {J. Differential Equations},
  FJOURNAL = {Journal of Differential Equations},
    VOLUME = {394},
      YEAR = {2024},
     PAGES = {345--374},
      ISSN = {0022-0396,1090-2732},
   MRCLASS = {93D15 (35L03 35Q61 47A40 47F05 78A25 93C20)},
  MRNUMBER = {4725773},
       DOI = {10.1016/j.jde.2024.03.021}
}

@book {Behrndt2020,
    AUTHOR = {Behrndt, Jussi and Hassi, Seppo and de Snoo, Henk},
     TITLE = {Boundary value problems, {W}eyl functions, and differential
              operators},
    SERIES = {Monographs in Mathematics},
    VOLUME = {108},
 PUBLISHER = {Birkh\"{a}user/Springer, Cham},
      YEAR = {[2020] \copyright 2020},
     PAGES = {vii+772},
      ISBN = {978-3-030-36713-8; 978-3-030-36714-5},
   MRCLASS = {34-02 (34B05 34B20 34B24 34Lxx 35Pxx 47A70 47B25)},
  MRNUMBER = {3971207},
MRREVIEWER = {Julio\ H.\ Toloza},
       DOI = {10.1007/978-3-030-36714-5}
}

@misc{Pignotti2023,
      title={Exponential decay estimates for semilinear wave-type equations with time-dependent time delay},
      author={Cristina Pignotti},
      year={2023},
      eprint={2303.14208},
      archivePrefix={arXiv},
      primaryClass={math.AP}
}

@article{Pignotti2016,
 author = {Fragnelli, G. and Pignotti, C.},
 title = {Stability of solutions to nonlinear wave equations with switching time delay},
 fjournal = {Dynamics of Partial Differential Equations},
 journal = {Dyn. Partial Differ. Equ.},
 issn = {1548-159X},
 volume = {13},
 number = {1},
 pages = {31--51},
 year = {2016},
 language = {English},
 doi = {10.4310/DPDE.2016.v13.n1.a2},
 zbMATH = {6585214},
 Zbl = {1350.35135}
}

@article{Pignotti2022,
 author = {Paolucci, Alessandro and Pignotti, Cristina},
 title = {Well-posedness and stability for semilinear wave-type equations with time delay},
 fjournal = {Discrete and Continuous Dynamical Systems. Series S},
 journal = {Discrete Contin. Dyn. Syst., Ser. S},
 issn = {1937-1632},
 volume = {15},
 number = {6},
 pages = {1561--1571},
 year = {2022},
 language = {English},
 doi = {10.3934/dcdss.2022049},
 zbMATH = {7539681},
 Zbl = {1492.93142}
}

@article{Pignotti2021,
	author = {Paolucci, Alessandro and Pignotti, Cristina},
	date = {2021/12/01},
	doi = {10.1007/s00498-021-00292-0},
	id = {Paolucci2021},
	isbn = {1435-568X},
	journal = {Mathematics of Control, Signals, and Systems},
	number = {4},
	pages = {617--636},
	title = {Exponential decay for semilinear wave equations with viscoelastic damping and delay feedback},
	volume = {33},
	year = {2021}
    }

@article{Pignotti2017,
	author = {Pignotti, Cristina},
	date = {2017/12/01},
	doi = {10.1007/s10884-016-9545-3},
	id = {Pignotti2017},
	isbn = {1572-9222},
	journal = {Journal of Dynamics and Differential Equations},
	number = {4},
	pages = {1309--1324},
	title = {Stability results for second-order evolution equations with memory and switching time-delay},
	volume = {29},
	year = {2017}
    }

@book{Necas2012,
 author = {Ne{\v{c}}as, Jind{\v{r}}ich},
 title = {Direct methods in the theory of elliptic equations. {Transl}. from the {French}. {Editorial} coordination and preface by {{\v{S}}{\'a}rka} {Ne{\v{c}}asov{\'a}} and a contribution by {Christian} {G}. {Simader}},
 fseries = {Springer Monographs in Mathematics},
 series = {Springer Monogr. Math.},
 issn = {1439-7382},
 isbn = {978-3-642-10454-1; 978-3-642-10455-8},
 year = {2012},
 publisher = {Berlin: Springer},
 language = {English},
 doi = {10.1007/978-3-642-10455-8},
 zbMATH = {5702707},
 Zbl = {1246.35005}
}

@article{Zwart2015,
	author = {Mikael Kurula and Hans Zwart},
	doi = {10.1080/00207179.2014.993337},
	journal = {International Journal of Control},
	number = {5},
	pages = {1063--1077},
	publisher = {Taylor \& Francis},
	title = {Linear wave systems on n-{D} spatial domains},
	volume = {88},
	year = {2015}
    }

@article{Nicaise2009,
 author = {Nicaise, Serge and Valein, Julie and Fridman, Emilia},
 title = {Stability of the heat and of the wave equations with boundary time-varying delays},
 fjournal = {Discrete and Continuous Dynamical Systems. Series S},
 journal = {Discrete Contin. Dyn. Syst., Ser. S},
 issn = {1937-1632},
 volume = {2},
 number = {3},
 pages = {559--581},
 year = {2009},
 language = {English},
 doi = {10.3934/dcdss.2009.2.559},
 zbMATH = {5613322},
 Zbl = {1171.93029}
}

@article{KunzeMuiPloss2026,
 author = {Kunze, Markus and Mui, Jonathan and Plo{\ss}, David},
 title = {Elliptic operators with non-local {Wentzell}-{Robin} boundary conditions},
 fjournal = {Journal of Spectral Theory},
 journal = {J. Spectr. Theory},
 issn = {1664-039X},
 volume = {16},
 number = {1},
 pages = {197--242},
 year = {2026},
 language = {English},
 doi = {10.4171/jst/595},
 zbMATH = {8161850}
}

@article{Goldstein2006,
 author = {Goldstein, Gis{\`e}le Ruiz},
 title = {Derivation and physical interpretation of general boundary conditions},
 fjournal = {Advances in Differential Equations},
 journal = {Adv. Differ. Equ.},
 issn = {1079-9389},
 volume = {11},
 number = {4},
 pages = {457--480},
 year = {2006},
 language = {English},
 zbMATH = {5068811},
 Zbl = {1107.35010}
}

@book{Batkai2005,
 author = {B{\'a}tkai, Andr{\'a}s and Piazzera, Susanna},
 title = {Semigroups for delay equations},
 fseries = {Research Notes in Mathematics},
 series = {Res. Notes Math.},
 volume = {10},
 isbn = {1-56881-243-4},
 year = {2005},
 publisher = {Wellesley, MA: A K Peters},
 language = {English},
 zbMATH = {2215036},
 Zbl = {1089.35001}
}

@book{Diekmann1995,
 author = {Diekmann, Odo and van Gils, Stephan A. and Verduyn Lunel, Sjoerd M. and Walther, Hans-Otto},
 title = {Delay equations. {Functional}-, complex-, and nonlinear analysis},
 fseries = {Applied Mathematical Sciences},
 series = {Appl. Math. Sci.},
 issn = {0066-5452},
 volume = {110},
 isbn = {0-387-94416-8},
 year = {1995},
 publisher = {New York, NY: Springer-Verlag},
 language = {English},
 zbMATH = {770062},
 Zbl = {0826.34002}
}

@article{Hernandez2006,
	author = {Eduardo Hern{\'a}ndez and Andr{\'e}a Prokopczyk and Luiz Ladeira},
	doi = {https://doi.org/10.1016/j.nonrwa.2005.03.014},
	issn = {1468-1218},
	journal = {Nonlinear Analysis: Real World Applications},
	number = {4},
	pages = {510-519},
	title = {A note on partial functional differential equations with state-dependent delay},
	volume = {7},
	year = {2006}
    }

@article{Rezounenko2007,
	author = {Alexander V. Rezounenko},
	doi = {https://doi.org/10.1016/j.jmaa.2006.03.049},
	issn = {0022-247X},
	journal = {Journal of Mathematical Analysis and Applications},
	number = {2},
	pages = {1031-1045},
	title = {Partial differential equations with discrete and distributed state-dependent delays},
	volume = {326},
	year = {2007}
    }

@article{Trostorff2015,
author = {Trostorff, Sascha},
title = {Exponential stability of a second-order integro-differential equation with delay},
journal = {Proceedings in Applied Mathematics and Mechanics},
volume = {15},
number = {1},
pages = {699-700},
doi = {https://doi.org/10.1002/pamm.201510339},
year = {2015}
}

@incollection{Buchinger2024,
 author = {Buchinger, Andreas and Doherty, Michael},
 title = {On some impedance boundary conditions for a thermo-piezo-electromagnetic system},
 booktitle = {Systems theory and PDEs. Open problems, recent results, and new directions. Based on the first workshop on systems theory and PDEs, WOSTAP, Freiberg, Germany, July 2022},
 isbn = {978-3-031-64990-5; 978-3-031-64993-6; 978-3-031-64991-2},
 pages = {1--24},
 year = {2024},
 publisher = {Cham: Birkh{\"a}user},
 language = {English},
 doi = {10.1007/978-3-031-64991-2_1},
 zbMATH = {7949838},
 Zbl = {1554.35316}
}

@phdthesis{Doherty2024,
    author      = {Michael Doherty},
    title       = {A model for ultrasonic transducers in a high-temperature regime with boundary dynamics: an evolutionary equations approach},
    type        = {PhD thesis},
    school      = {University of Strathclyde Glasgow},
    year        = {2024},
    note        = {\url{https://stax.strath.ac.uk/concern/theses/8k71nh74g}
    }
}

\end{document}